\newcommand{\real}{{\mathbb{R}}}
\newcommand{\R}{\mbox{$\mathbb{R}$}}
\newcommand{\ZZ}{\mathbf{Z}}
\newcommand{\NN}{\mathbf{N}}
\newtheorem{lemma}{Lemma}[section]
\newtheorem{prop}[lemma]{Proposition}
\newtheorem{thm}[lemma]{Theorem}
\newtheorem{cor}[lemma]{Corollary}
\newtheorem{alg}[lemma]{Algorithm}
\theoremstyle{definition}
\newtheorem{Def}[lemma]{Definition}
\newtheorem{exam}[lemma]{Example}
\theoremstyle{remark}
\newtheorem{rem}[lemma]{Remark}
\title[]{Classification of networks with asymmetric inputs}
\author{Manuela Aguiar}
\address{Manuela Aguiar, Faculdade de Economia, Centro de Matem\'atica, Universidade do Porto,
Rua Dr Roberto Frias, 4200-464 Porto, Portugal.}
\email{maguiar@fep.up.pt}
\author{Ana Dias}
\address{Ana Dias, Departamento de Matem\'atica, Centro de Matem\'atica, Universidade do Porto,
Rua do Campo Alegre, 687, 4169-007 Porto, Portugal.}
\email{apdias@fc.up.pt}
\author{Pedro Soares}
\address{Pedro Soares, Faculty of Applied Physics and Mathematics, Gda\'nsk University of Technology, Narutowicza 11/12, 80-233
Gdańsk, Poland; Centro de Matem\'atica, Universidade do Porto, Rua do Campo Alegre, 687, 4169-007 Porto, Portugal.}
\email{pedro.soares@pg.edu.pl}
\thanks{Corresponding author: pedro.soares@pg.edu.pl}
\begin{document}

\begin{abstract} Coupled cell systems associated with a coupled cell network are determined by (smooth) vector fields that are consistent with the network structure. Here, we follow the formalisms of  Stewart, Golubitsky and Pivato (Symmetry groupoids and patterns of synchrony in coupled cell networks, {\it SIAM J. Appl. Dyn. Syst.} {\bf 2} (4) (2003) 609--646), Golubistky, Stewart and T\"{o}r\"{o}k (Patterns of synchrony in coupled cell networks with multiple arrows,  {\it SIAM J. Appl. Dynam. Sys.} {\bf 4} (1) (2005) 78--100) and Field (Combinatorial dynamics, {\it Dynamical Systems} {\bf 19} (2004) (3) 217--243). It is known that two non-isomorphic $n$-cell coupled networks can determine the same sets of vector fields --  these networks are said to be ODE-equivalent. The set of all $n$-cell coupled networks is so partitioned into classes of ODE-equivalent networks. With no further restrictions, the number of ODE-classes is not finite and each class has an infinite number of networks. Inside each ODE-class we can find a finite subclass of networks that minimize the number of edges in the class, called minimal networks.

In this paper, we consider coupled cell networks with asymmetric inputs. That is, if $k$ is the number of distinct edges types, these networks have the property that every cell receives $k$  inputs, one of each type. Fixing the number $n$ of cells, we prove that: the number of ODE-classes is finite; restricting to a maximum of $n(n-1)$ inputs, we can cover all the ODE-classes; all minimal $n$-cell networks with $n(n-1)$ asymmetric inputs are ODE-equivalent. We also give a simple criterion to test if a network is minimal and we conjecture lower estimates for the number of distinct ODE-classes of $n$-cell networks with any number $k$ of asymmetric inputs. Moreover, we present a full list of representatives of the ODE-classes of networks with three cells and two asymmetric inputs.
\end{abstract}

\date{\today}

\keywords{ Coupled cell network, asymmetric inputs, minimal network, network ODE-class.}
\subjclass[2010]{Primary: 34C20,  05C90, 05C30; Secondary:  15A36}


\ 

\vspace{20mm}

\maketitle

\tableofcontents

\section{Introduction}

In this paper, we follow the formalisms of Stewart, Golubitsky and Pivato~\cite{SGP03}, Golubitsky, Stewart and T\"{o}r\"{o}k~\cite{GST05} and Field~\cite{F04} where a {\it (coupled cell) network} is represented by a directed graph,  representing schematically a set of dynamical systems (the cells) and their dependencies (the couplings). The nodes of the graph represent the cells and the edges represent the couplings. Any such directed graph on $n$ nodes can be represented by a set of adjacency matrices, one for each type of input. 
Here we consider that each cell represents a system of ordinary differential equations (ODEs) where auto-couplings (self-loops) are allowed. Note that the number of networks grows exponentially with the number of cells and the number of edges.  See, for example, Aldosray and Stewart~\cite{AS05} for the enumeration of networks with a single type of cell and a single type of input such that all the cells receive the same number of inputs.

A {\it coupled cell system} associated with a network has to respect its topology. 
That is, choosing cell phase spaces, the network admissible vector fields are the smooth mappings on the total phase space that reflect whose cells are coupled to whom, and whose cells and couplings are identical or not. Golubitsky, Stewart and T\"{o}r\"{o}k~\cite{GST05} have remarked  that non-isomorphic networks can correspond to the same space of admissible vector fields, that is, be {\it ODE-equivalent}.
In particular, it follows that the sets of dynamics that can be realized by ODE-equivalent networks are the same, which from the modelling point of view is important in its own.  
Dias and Stewart~\cite{DS05} showed that two $n$-cell networks are ODE-equivalent if and only if are {\it linearly equivalent}, choosing cell phase spaces to be $\R$. 
That is,  considering the real linear space $M$ of the $n \times n$ matrices with real entries, two $n$-cell networks are ODE-equivalent if and only if, renumbering the cells  of one of the networks if necessary, the corresponding real linear subspaces of $M$ generated by the associated  adjacency matrices coincide.
Using this result, Aguiar and Dias~\cite{AD07} characterize  the subclass of any ODE-class of networks with minimal number of edges, including an algorithm for obtaining those {\it minimal} subclasses. The number of networks at each ODE-class is not finite, however the subclass of minimal networks is finite.

We restrict our attention to {\it networks with $k$ asymmetric inputs}, where $k$ is any positive integer number. 
That is, networks with $k$ input types and where each node receives exactly one input of each type. We remark that these networks are {\it homogeneous}, that is, every cell receives exactly the same number of inputs. It is proved in Aguiar, Ashwin, Dias and Field~\cite{AADF11} that these networks can support robust heteroclinic cycles, even in low dimension. See also Aguiar~\cite{A18} for the synchrony lattice of networks with asymmetric inputs. 
In Aguiar, Dias and Soares~\cite{ADS19}, it is studied the steady-state lifting bifurcation problem for those networks. All the theory developed on normal form and bifurcation theory by Rink and Sanders~\cite{RS14,RS15} and Nijholt, Rink and Sanders~\cite{NRS16}-\cite{NRS17b} concerns networks with asymmetric inputs. 

We present a methodology for classifying  networks of $n$-cells with $k$ asymmetric inputs. Fixing the number $n$ of cells, we ask if  the number of distinct ODE-classes of networks  is finite and, in that case, if there is a methodology of enumerating minimal $n$-cell  networks with $k$ asymmetric inputs, for any $k$. Following Aguiar and Dias~\cite{AD07}, we provide a simple criterion to test if a network is minimal: an $n$-cell network with $k$ asymmetric inputs is minimal if and only if the $n \times n$ identity matrix and the corresponding $k$ adjacency matrices are linearly equivalent (Proposition~\ref{prop:min}).

In view of the large number of possible networks, different authors have focus their attention to classify and study networks with a  low number of cells and inputs.
Leite and Golubistky~\cite{LG06} classify all three-cell networks with identical cells and couplings, i.e., just one cell type and one input type  with valency one or two. 
They show that, up to ODE-equivalence, there are 34 distinct connected such networks.
Aguiar, Ashwin, Dias and Field~\cite{AADF11} presented the $10$ ODE-classes of strongly connected networks with three cells, two asymmetric inputs and one or two two-dimensional synchrony subspaces.
Rink and Sanders~\cite{RS14} classified the homogenous networks with two and three cells and asymmetric inputs which have a monoid symmetry.

Small networks are also called motifs networks, \cite{MSIKCA02}.
Motifs networks can be viewed as building blocks of complex networks.
The frequency of these motifs networks in complex networks can reveal some characteristics of those complex networks.
In particular, some studies suggest that the motifs' frequency is related with the function and the context of the complex networks such as biological or social networks.
In the formalism of coupled cell systems, Golubitsky, Stewart and T\"{o}r\"{o}k~\cite{GST05} noted the existence of invariant subspaces given by the synchronization of some cells and thus called synchrony subspaces.
Those synchrony subspaces only depend on the network structure and are independent from the given coupled cell system.
Moreover, the restriction of the system to a synchrony subspace corresponds to a coupled cell system in a smaller network.
Therefore the study of small networks can help our understanding of bigger networks.

Motivated by the works mentioned above, we consider to be of interest, as a start, to enumerate all three-cell networks with identical cells and one or two types of input
 where each cell receives exactly one coupling of each type.
 In a follow-up work, we study the steady-state bifurcation problems for these networks~\cite{ADS19b}. There are $650$ networks with three cells and two asymmetric inputs that we reduce to a list of $48$ minimal networks representing all the different ODE-classes (see Theorem~\ref{thm:enumeration} and Tables~\ref{tab:C3L0RI2.tex}-\ref{tab:C2L3.tex}).
Reducing the list of all  three-cell networks to minimal representative networks is a demanding computational task, because we need to compare the linear vector spaces generated by the adjacency matrices of two networks and all possible permutations of the cells of one of the networks.
Note that this list includes the $10$ classes of networks presented by Aguiar, Ashwin, Dias and Field~\cite{AADF11} and the seven $3$-cells networks considered by Rink and Sanders~\cite{RS14}.
Surprisingly, as we remark, two $3$-cell non-ODE-equivalents have the same monoid symmetry with $3$ elements.
The particular case of the three-cell  networks with two asymmetric inputs already illustrates the difficulty and the amount of work involved to classify networks, up to ODE-equivalence. 


 In this work, we prove that the maximum number of asymmetric inputs for a minimal network with $n$ cells is $n(n-1)$ (Theorem~\ref{thm:dim}). 
Thus any $n$-cell  network with asymmetric inputs is ODE-equivalent to an $n$-cell  network with at most $n(n-1)$ asymmetric inputs. That is, the set of dynamics that can occur for  $n$-cell networks with at most $n(n-1)$ asymmetric inputs covers all possible types of dynamics that can occur for any  $n$-cell network with any number of asymmetric inputs.  
Moreover, we remark that all minimal  networks of $n$-cells with $n(n-1)$ asymmetric inputs are ODE-equivalent (Corollary~\ref{cor:minimal}). 
We present a minimal network with $n(n-1)$ asymmetric inputs which represents such unique ODE-class (Theorem~\ref{cor:rep_min}).
Surprisingly this representative is given by the union of $n(n-1)$ feed-forward networks with one input.
{\it In feed-forward networks}, cells are arranged in layers, and the information moves only in one direction, forward, from the input nodes (first layer), through the hidden nodes (middle layers), and to the output nodes (last layer). 
This class of networks have been applied in different fields and theoretical studies of these kind of networks have been addressed. 
See for example~\cite{ADF17,S18,ADF19} and references therein to specific applications. 
Feed-forward systems can  exibit dynamical features that are not common in systems without feed-forward structure.
One example is the occurrence of generic Hopf bifurcation in one-parameter families of coupled cell systems, from an equilibrium to periodic solutions and where there is growth of the amplitude of cells (as a function of the bifurcation parameter) faster than would be expected in systems that do not have the feed-forward structure~\cite{RS13}. 
See also \cite{NRS17a,S18} where a similar phenomenon is proved in the steady-state bifurcation case for feed-forward systems and for more recent work ~\cite[Chapters 8-11]{SS19} and \cite{NRSS19}.

Our results imply that, fixing the number $n$ of cells, the number of distinct ODE-classes of  $n$-cell networks with any number of asymmetric inputs is finite.  
Therefore, we can obtain a finite list of minimal networks such that each ODE-class of  networks with $n$-cells and asymmetric inputs is uniquely represented.
In order to list every ODE-class we can repeat the method we use in Section~\ref{sec:casestudy} for three cell networks and enumerate all distinct ODE minimal $n$-cell  networks with $k$ asymmetric inputs, where $k$ runs from $1$ to $n(n-1)$.
Alternatively, we can start with a list of every network with $n(n-1)$ asymmetric inputs and then reduce it to a list of minimal representative networks.
We present two algorithms to construct minimal representative networks with $n$ cells and one asymmetric input of distinct ODE-classes using the minimal representatives networks with less than $n$ cells. 
These algorithms allow us to prove that there are at least $n(n-1)$ ODE-classes where the minimal networks have $n$ cells and one asymmetric input (Theorem~\ref{thm:min_ncell_1input}).

The manuscript is organised as follows: Section~\ref{sec:pre} establishes some definitions that are used in the rest of the paper. In Section~\ref{sec:ODE} we recall the structure of coupled cell systems consistent with networks. The definition of ODE-equivalence of networks is given and it is stated the result that establishes that ODE-equivalent networks are the linear equivalent networks. The minimality of networks is defined in Section~\ref{sec:criterion}, together with a criterion is given for checking the minimality of  networks with asymmetric inputs.  This simple criterion is a basic tool used in Section~\ref{sec:main} to obtain our main results of the paper. 
In Section~\ref{sec:casestudy}, we illustrate the ideas described above, by presenting the classification of the three-cell  networks with two asymmetric inputs. Our general results concerning the classification of $n$-cell  networks with asymmetric inputs appear in Section~\ref{sec:main}. 
In Section~\ref{sec:universal}, we give a representative network of the ODE-class of the minimal $n$-cell  networks with $n(n-1)$ asymmetric inputs which is the union of $n(n-1)$ feed-forward networks. 
In Section~\ref{sec:lower_min_1_input}, we propose two algorithms to describe minimal $n$-cell networks with one asymmetric input of distinct ODE-classes. We end with some final conclusions in Section~\ref{sec:concl} where, in particular, we present two conjectures about the number of minimal networks.

\section{Preliminary definitions}\label{sec:pre}

In this section, we recall a few definitions and results concerning coupled cell networks, coupled cell systems and ODE-equivalence of networks. 
We follow the coupled cell network formalism of Stewart, Golubitsky and Pivato~\cite{SGP03} and  Golubistky, Stewart and T\"{o}r\"{o}k~\cite{GST05}. 
 
\begin{Def} \normalfont
A {\it (coupled cell) network} $G$ consists of a finite nonempty set $C$ of {\it cells} and a finite nonempty set $E= \{ (c,d):\ c,d \in C\}$ of {\it edges}. Each pair $(c,d)\in E$ represents an edge from cell $d$ to cell $c$ and the cells $c, d$ are called, respectively, the {\it head} and  {\it tail} cell. 
Cells and edges can be of different types. 
\end{Def}

A network can be represented by a directed unweighted graph, where the nodes represent the cells and the edges are depicted by directed arrows. Different types of cells and edges are indicated in the graph, respectively, by different shapes of nodes and different edge arrowheads.

\begin{Def} \normalfont \label{definputset} 
A {\it network with $k$ asymmetric inputs}  is a network with $k$ edge types where each cell receives exactly one edge of each type.
\end{Def} 

Every network with $k$ asymmetric inputs is a homogenous network. 

\begin{Def} \normalfont \label{defhomognet} 
A network is said to be {\it homogeneous} if the cells have all the same type, that is, they are  {\it identical}, 
and receive the same number of input edges per edge type.
\end{Def}

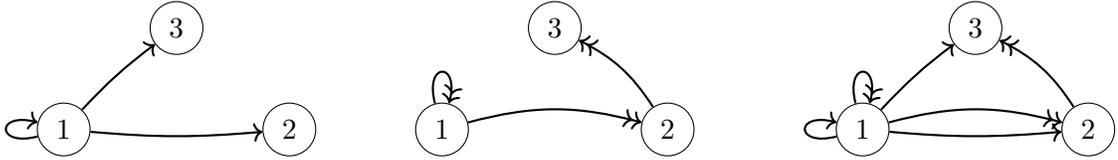
\begin{figure}
\begin{center}
\begin{tabular}{ccc}
\begin{tikzpicture}
[scale=.15,auto=left, node distance=1.5cm, every node/.style={circle,draw}]
 \node[fill=white] (n1) at (4,0) {\small{1}};
  \node[fill=white] (n2) at (24,0) {\small{2}};
 \node[fill=white] (n3) at (14,9)  {\small{3}};

\draw[ arrows={->}, thick]  (n1) to [loop left] (n1);
\draw[arrows={->}, thick] (n1) edge  [bend left=-5] (n2);
\draw[arrows={->}, thick]  (n1) edge  [bend left=5]  (n3);
 \end{tikzpicture}  \qquad  &  \qquad 
 \begin{tikzpicture}
 [scale=.15,auto=left, node distance=1.5cm, every node/.style={circle,draw}]
 \node[fill=white] (n1) at (4,0) {\small{1}};
  \node[fill=white] (n2) at (24,0) {\small{2}};
 \node[fill=white] (n3) at (14,9)  {\small{3}};

 \draw[arrows={->>}, thick] (n1) to [loop above] (n1);
\draw[arrows={->>}, thick]  (n1) edge  [bend left=15] (n2);
\draw[arrows={->>}, thick]  (n2) edge  [bend left=-15] (n3);
 
 \end{tikzpicture}  \qquad  &  \qquad 
 \begin{tikzpicture}
[scale=.15,auto=left, node distance=1.5cm, every node/.style={circle,draw}]
 \node[fill=white] (n1) at (4,0) {\small{1}};
  \node[fill=white] (n2) at (24,0) {\small{2}};
 \node[fill=white] (n3) at (14,9)  {\small{3}};

\draw[ arrows={->}, thick]  (n1) to[loop left] (n1);
\draw[arrows={->}, thick] (n1) edge  [bend left=-5] (n2);
\draw[arrows={->}, thick]  (n1) edge  [bend left=5]  (n3);

\draw[arrows={->>}, thick] (n1) to [loop above] (n1);
\draw[arrows={->>}, thick]  (n1) edge  [bend left=15] (n2);
\draw[arrows={->>}, thick]  (n2) edge  [bend left=-15] (n3);
\end{tikzpicture}
 \end{tabular}
\caption{Networks with three cells and asymmetric inputs: the left and the middle networks have one input; 
the right network has two asymmetric inputs.}
		\label{net_examples}
\end{center}
\end{figure}

\begin{exam} 
In Figure~\ref{net_examples}, we present three-cell networks with one and two asymmetric inputs. 
\hfill 
$\Diamond$
\end{exam}

\begin{Def} \normalfont  
Given a network with set of cells $C$, we say there is a {\it directed path} connecting a sequence of cells $(c_0,c_1, \ldots ,c_{k-1},c_{k})$ of $C$, if there is an edge from $c_{j-1}$ to $c_{j}$, for $j \in \{1,...,k\}$. 
If, for every $j \in \{1,...,k\}$, there is an edge from $c_{j-1}$ to $c_{j}$ or from $c_{j}$ to $c_{j-1}$, we say that there is an {\it undirected path} connecting the sequence of cells $(c_0,c_1, \ldots ,c_{k-1},c_{k})$.
A network is {\it connected} if there is an undirected path between any two cells.
And a network is {\it strongly connected} if there is a directed path connecting all the cells.
\end{Def}

The coupling structure of a network with set of cells $C=\{c_1, \ldots, c_n\}$ and $k$ edge types can be described through $k$ {\it adjacency matrices} $A_l:=(a_{ij}^{(l)}) \in M_{n,n}(\R)$, with rows and columns indexed by the cells in $C$ and $1\leq l\leq k$. Each  entry $a_{	ij}^{(l)}$ corresponds to the number of edges of type $l$ from cell $c_j$ to cell $c_i$. If the network has asymmetric inputs then its adjacency matrices have  entries  $0$ or $1$.

\begin{exam} 
The three-cell network on the right  in Figure~\ref{net_examples} has two asymmetric inputs. Its coupling structure can be represented by the following two $3 \times 3$  adjacency matrices (corresponding, respectively, to the adjacency matrices of the networks on the left and the middle of Figure~\ref{net_examples}):
$$
A_1 = 
\left(
\begin{array}{lll}
1 & 0 & 0 \\
1 & 0 & 0 \\
1 & 0 & 0 
\end{array}
\right), \qquad 
A_2 = 
\left(
\begin{array}{lll}
1 & 0 & 0 \\
1 & 0 & 0 \\
0 & 1 & 0 
\end{array}
\right)\, .
$$
\hfill 
$\Diamond$
\end{exam}

According to the definition of union of graphs, we have the following definition for the union of two networks with the same set of cells but having different edge-types.
\begin{Def} \label{def:union}
Given $k$ networks $G_i$ with the same set of cells $C$, and sets of edges $E_i$, for $i=1, \ldots, k$, we define the {\it union network} $G_1\cup \cdots \cup G_k$, to be the  network with set of cells $C$ and set of edges $E_1 \cup \cdots \cup E_k$.  The set of adjacency matrices of the union network is the disjoint union of the corresponding sets of adjacency matrices.
\end{Def}

\begin{exam} \normalfont 
A network with $k$ asymmetric inputs is the union of $k$ networks with one (asymmetric) input.  The network on the right of Figure~\ref{net_examples}  is the union of the networks on the middle and the left. 
\hfill 
$\Diamond$
\end{exam}

Among the networks with one (asymmetric) input some are {\it feed-forward} networks: they have one cell with a self-loop and tails with root at that cell. A {\it tail} with length $n$  is a directed path connecting a sequence of $n+1$ cells from a root cell to a cell with no outgoing connections.

\begin{exam} \normalfont 
The networks on the left and middle of Figure~\ref{net_examples}  are feed-forward with one input. The network on the left has two tails with length one and the network on the middle has one tail with length two.
\hfill 
$\Diamond$
\end{exam}

\subsection{Coupled cell systems}

Let $G$ be an $n$-cell network with $k$ asymmetric inputs, say of types $1, \ldots, k$. Following~\cite{SGP03, GST05}, we take a cell to be a system of ordinary differential equations and we consider the class of coupled cell systems that have structure consistent with the network $G$. All the cells have the same phase space, say $V = \R^m$ for some $m >0$, the same  internal dynamics and, for each cell $i$, the dynamics is governed by the same smooth function $f$, evaluated at the starting cells of the edges targeting that cell. Thus, for $i=1, \ldots, n$, we have that the evolution of cell $i$ is given by the set of ordinary differential equations 
\begin{equation} \label{eq:CCS1}
\dot{x}_i = f\left(x_i; \, x_{i_1}, \ldots,  x_{i_k}\right), 
\end{equation}
if the input set of cell $i$ is  $\{i_1, \ldots, i_k\}$, where $i_j$ is the tail cell of the edge with type $j$ and head cell $i$. The function $f:\, V^{k+1} \to V$ is assumed to be smooth. We say that coupled cell systems with cells governed by equations of the form (\ref{eq:CCS1}) are $G$-{\it admissible}.

\begin{exam} \normalfont 
Consider the networks on the left and the right of Figure~\ref{net_examples}. Coupled cell systems with structure consistent with these, have the following form, respectively: 
$$
\left\{ 
\begin{array}{l}
\dot{x}_1 = f(x_1; x_1)\\
\dot{x}_2 = f(x_2; x_1)\\
\dot{x}_3 = f(x_3; x_1)
\end{array} 
\right.
\qquad \qquad 
\left\{ 
\begin{array}{l}
\dot{x}_1 = g(x_1; x_1;x_1)\\
\dot{x}_2 = g(x_2; x_1;x_1)\\
\dot{x}_3 = g(x_3; x_1;x_2)
\end{array} 
\right.
$$
for any smooth functions $f:\, \left( \R^m\right)^2 \to  \R^m$ and $g:\,  \left(\R^m\right)^3 \to  \R^m$, if cell phase spaces are chosen to be  $\R^m$. 
\hfill 
$\Diamond$
\end{exam}

\subsection{Network synchrony subspaces}

A network {\it synchrony subspace} $\Delta$  is  a subspace of the network total phase space defined by certain equalities of cell coordinates (a polydiagonal subspace) which is left invariant under the flow of every network admissible coupled cell system. In that case, if $x_i = x_j$ is  one of the cell coordinates defining $\Delta$, then a solution of any system given by (\ref{eq:CCS1}) with initial condition in $\Delta$ have cells $i,j$ synchronized (i.e., $x_i(t) = x_j(t)$) for all time $t$.  One of the consequences of Theorem 6.5 of \cite{SGP03} is that a polydiagonal space $\Delta$ is a synchrony subspace if and only if it is left invariant  under the network adjacency matrices.
So, a polydiagonal space is a synchrony subspace for a union network if and only if it is a synchrony subspace for each network.

\begin{exam} \normalfont Consider the networks  of Figure~\ref{net_examples}. The diagonal space defined by $x_1 = x_2 = x_3$ is a synchrony subspace for the three networks. In fact, any polydiagonal is a synchrony subspace for the network on the left and the subspace defined by $x_1=x_2$ is a synchrony subspace for the middle network. Thus that subspace, $x_1=x_2$, is a synchrony subspace for the network in the right. 
\hfill 
$\Diamond$
\end{exam}

\section{ODE-equivalence of networks}\label{sec:ODE} 
It was noted in ~\cite{SGP03} that different networks with the same number of cells can have the same set of admissible equations for any choice of cell phase spaces. 
As an example of that, consider the two networks in Figure~\ref{silly}. Note that the corresponding coupled cell systems with structure consistent with these, have the following form, respectively: 
$$
\left\{ 
\begin{array}{l}
\dot{x}_1 = f(x_1; x_1)\\
\dot{x}_2 = f(x_2; x_1)\\
\dot{x}_3 = f(x_3; x_1)
\end{array} 
\right.
\qquad \qquad 
\left\{ 
\begin{array}{l}
\dot{x}_1 = g(x_1; x_1;x_1)\\
\dot{x}_2 = g(x_2; x_1;x_1)\\
\dot{x}_3 = g(x_3; x_1;x_1)
\end{array} 
\right.
$$
for any smooth functions $f:\, \left( \R^m\right)^2 \to  \R^m$ and $g:\,  \left(\R^m\right)^3 \to  \R^m$, if cell phase spaces are chosen to be  $\R^m$.  Trivially, given $f$ we can define $g$ in the following form: $g(x,y,z) = f(x,y)$. Also, given $g$, we can define $f$ such that $f(x,y) = g(x,y,y)$. Thus, we have two networks where the associated sets of vector fields coincide.

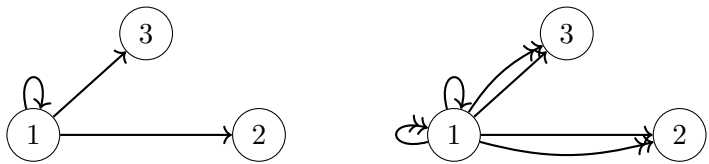
\begin{figure}
\begin{center}
\begin{tabular}{cc}
\begin{tikzpicture}
 [scale=.15,auto=left, node distance=1.5cm, every node/.style={circle,draw}]
 \node[fill=white] (n1) at (4,0) {\small{1}};
  \node[fill=white] (n2) at (24,0) {\small{2}};
 \node[fill=white] (n3) at (14,9)  {\small{3}};
\draw[->, thick] (n1) to[loop above] (n1); 
 \draw[->, thick] (n1) edge (n2); 
 \draw[->, thick] (n1) edge   (n3); 
\draw[->>,white, thick] (n1) to [loop left] (n1); 

 \end{tikzpicture}  \qquad & \qquad 
 \begin{tikzpicture}
[scale=.15,auto=left, node distance=1.5cm, every node/.style={circle,draw}]
 \node[fill=white] (n1) at (4,0) {\small{1}};
  \node[fill=white] (n2) at (24,0) {\small{2}};
 \node[fill=white] (n3) at (14,9)  {\small{3}};

\draw[ arrows={->}, thick]  (n1) to[loop above] (n1);
\draw[arrows={->}, thick] (n1) edge  (n2);
\draw[arrows={->}, thick]  (n1) edge   (n3);

\draw[->>, thick] (n1) to [loop left] (n1); 
 \draw[->>, thick] (n1) edge  [bend right=15] (n2); 
 \draw[->>, thick] (n1) edge    [bend right=-15] (n3); 
\end{tikzpicture}
 \end{tabular}
\caption{Two networks with three cells and asymmetric inputs that are ODE-equivalent.}
		\label{silly}
\end{center}
\end{figure}

   The next definition corresponds to  Definitions 5.1 and 6.2 in ~\cite{DS05}. There is also the more combinatorial approach presented by Agarwal and Field~\cite{AF10a, AF10b}.

\begin{Def}{\cite{DS05}} \normalfont  
Two $n$-cell networks $G_1$ and $G_2$  are {\it ODE-equivalent} when there is a bijection map between their sets of cells such that, for any choice of their cells phase spaces preserving this bijection between the sets of cells, they define the same set of admissible coupled cell systems. If this holds for the set  of linear admissible coupled cell systems, then $G_1$ and $G_2$  are said to be {\it linearly equivalent}.
\end{Def}

The following theorem which corresponds to  Theorem 7.1 of \cite{DS05} relates the two concepts of ODE-equivalence and linear equivalence on networks: 

\begin{thm}{\cite{DS05}} \label{thm:minimal}
Two $n$-cell networks $G_1$ and $G_2$  are ODE-equivalent if and only if they are linearly equivalent.
\end{thm}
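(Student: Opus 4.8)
The plan is as follows. One implication is the easy half and I would dispatch it first: a linear admissible coupled cell system is a fortiori an admissible coupled cell system, so the linear admissible class of a network is cut out of its full admissible class by intersecting with the space of linear vector fields on the chosen total phase space; hence a cell bijection identifying the full admissible classes of $G_1$ and $G_2$ also identifies their linear admissible classes, and ODE-equivalence implies linear equivalence. For the converse I would assume $G_1,G_2$ linearly equivalent and, after relabelling the cells of $G_2$ along the given bijection, assume the bijection is the identity. The first reduction is to translate linear equivalence into an equality of matrix spaces: with cell phase space $\mathbb{R}^m$, a linear admissible vector field of an $n$-cell network with adjacency matrices $A_1,\dots,A_k$ is exactly a map $x\mapsto (L_0\otimes I_n)x+\sum_{j=1}^{k}(L_j\otimes A_j)x$ with $L_0,\dots,L_k$ arbitrary $m\times m$ real matrices, and an elementary tensor-product computation shows that two such families of block matrices agree for some $m$ iff they agree for $m=1$, i.e. iff $\mathcal{W}_{G_1}:=\operatorname{span}_{\mathbb{R}}\{I_n,A_1,\dots,A_k\}$ equals $\mathcal{W}_{G_2}:=\operatorname{span}_{\mathbb{R}}\{I_n,B_1,\dots,B_\ell\}$ (for networks with a single cell type, as here; in general $I_n$ is replaced by the idempotents indexing the cell types). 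So linear equivalence becomes the identity $\mathcal{W}_{G_1}=\mathcal{W}_{G_2}$.

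Next I would extract the arithmetic consequence of that identity: writing $B_r=\mu_rI_n+\sum_j\lambda_{rj}A_j$ and, symmetrically, $A_j=\mu'_jI_n+\sum_r\lambda'_{jr}B_r$, and using that every adjacency matrix of a homogeneous network has constant row sum equal to its valency ($1$ here, as for $I_n$), evaluation on the all-ones vector forces $\mu_r+\sum_j\lambda_{rj}=1$ and $\mu'_j+\sum_r\lambda'_{jr}=1$. These affine-combination relations are exactly the bookkeeping that lets an ``input of $c$ in $G_2$'' be traded for the ``inputs of $c$ in $G_1$''. With this in hand, the goal becomes: given a cell phase space $V$ and a $G_2$-admissible vector field $F$ with component map $f\colon V^{\ell+1}\to V$, so that $F_c(x)=f\big(x_c;x_{s_1(c)},\dots,x_{s_\ell(c)}\big)$ ($s_r$ the tail map of type-$r$ edges of $G_2$), produce a smooth $g\colon V^{k+1}\to V$ with $F_c(x)=g\big(x_c;x_{t_1(c)},\dots,x_{t_k(c)}\big)$ for all $c$ ($t_j$ the tail maps of $G_1$); running the symmetric construction then gives equality of the admissible classes, i.e. ODE-equivalence.

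For linear $F$ the desired $g$ is furnished directly by the matrix-space equality. For general $F$ I would argue in two moves: (i) reduce to polynomial $f$, since admissibility is a condition cut out purely by the incidence combinatorics and both admissible classes are stable under passing to the homogeneous components of a Taylor expansion, so it suffices to match the polynomial admissible maps; (ii) for polynomial $f$, induct on degree, rewriting each monomial in $x_{s_1(c)},\dots,x_{s_\ell(c)}$ via multilinearity and the expansions $B_r=\mu_rI_n+\sum_j\lambda_{rj}A_j$ as a polynomial in $x_c,x_{t_1(c)},\dots,x_{t_k(c)}$, the normalisation $\mu_r+\sum_j\lambda_{rj}=1$ ensuring that a single universal polynomial (independent of $c$) does the rewriting, so the pieces assemble into one admissible datum $g$.

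The essential difficulty is this last step — showing that equality of the \emph{linear} admissible classes forces equality of the \emph{full} nonlinear classes, since a priori a nonlinear admissible map might resolve finer combinatorial features of the network than its linearisation does. The delicate point is the ``independent of $c$'' uniformity, equivalently the claim that cells which are input-equivalent in $G_1$ remain input-equivalent in $G_2$, so that the rewritten $g$ is a bona fide network-admissible map rather than a cell-by-cell collection of functions; this is precisely where the unit-row-sum relations do the work. A viable alternative to the Taylor-reduction route would be to express admissibility as invariance under the network's symmetry groupoid and to prove directly that linearly equivalent networks have groupoids with the same algebra of invariant smooth maps.
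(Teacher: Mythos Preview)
The paper does not supply its own proof of this statement: it is quoted as Theorem~7.1 of Dias--Stewart~\cite{DS05}, and the text immediately following it only records the consequence (their Corollary~7.9) that ODE-equivalence amounts to equality of the matrix spans $\langle\mbox{Id}_n,A_1,\dots,A_{k_1}\rangle$ and $\langle\mbox{Id}_n,B_1,\dots,B_{k_2}\rangle$ after a suitable cell bijection. So there is no in-paper argument to compare your sketch against.

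On your sketch itself: the easy direction and the reduction of linear equivalence to the equality $\mathcal{W}_{G_1}=\mathcal{W}_{G_2}$ are fine, and your extraction of the affine constraint $\mu_r+\sum_j\lambda_{rj}=1$ from the constant row sums is exactly the right bookkeeping. The weak link is your step~(i), the passage from smooth to polynomial: stability of the admissible class under taking Taylor homogeneous components does \emph{not} by itself give that matching polynomial classes forces matching smooth classes, since a smooth map is not recovered from its Taylor series. In fact, for the asymmetric-input networks of this paper the detour through polynomials is unnecessary: reading the matrix identity $B_r=\mu_r I_n+\sum_j\lambda_{rj}A_j$ row by row gives $x_{s_r(c)}=\mu_r x_c+\sum_j\lambda_{rj}x_{t_j(c)}$ as an identity on $V^n$ for every cell $c$, so one may set
\[
g(y_0;y_1,\dots,y_k):=f\Bigl(y_0;\ \mu_1 y_0+\textstyle\sum_j\lambda_{1j}y_j,\ \dots,\ \mu_\ell y_0+\textstyle\sum_j\lambda_{\ell j}y_j\Bigr)
\]
and obtain a smooth $G_1$-admissible datum directly, with the uniformity in $c$ automatic. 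For general networks (higher valency, multiple arrows per type) the theorem is genuinely harder, because the row identity then relates only \emph{sums} of input coordinates and one must also respect the vertex-group symmetries in the admissible map; that is the case the full argument in~\cite{DS05} handles, and your monomial-rewriting outline would need substantially more to cover it.
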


In fact, by Corollary 7.9 in \cite{DS05}, we have that two $n$-cell networks, $G_1$ and $G_2$, are 
ODE-equivalent if and only if it is possible to identify through a bijection the corresponding sets of cells such that 
there is equality between the two linear subspaces of  $M_{n \times n} (\R)$ generated by $\mbox{Id}_n, A_1, \dots, A_{k_1}$ and $\mbox{Id}_n, B_1, \dots, B_{k_2}$, where $A_1, \dots, A_{k_1}$ and $B_1, \dots, B_{k_2}$ are the adjacency matrices of $G_1$ and $G_2$, respectively. 

\begin{exam}
In Figure~\ref{silly}, note that the network on the right has two edge types represented by the same adjacency matrix. 
Trivially, using the linear equivalence criterion, the two networks in Figure~\ref{silly} are ODE-equivalent. 
\hfill
$\Diamond$
\end{exam}

\section{Criterion for minimality of networks with asymmetric inputs}\label{sec:criterion}

Fixing the number $n$ of cells, and given an $n$-cell network $G$, the ODE-class of $G$, denoted by $[G]$, is the set of all $n$-cell networks that are ODE-equivalent to $[G]$, which is in general non-finite. In Aguiar and Dias~\cite{AD07}, it was  introduced the notion of {\it minimal networks} of an ODE-class of a network $G$, which are the  networks with the minimal number of edges among the set $[G]$ of all the networks that are ODE-equivalent to $G$.

  \begin{exam}
As noted above, the two networks in Figure~\ref{silly} are ODE-equivalent. We see that each cell in the network on the left receives a unique input. It follows that this network is minimal. In fact, from Proposition 5.11 of Aguiar and Dias~\cite{AD07}, we have that, up to permutation of the cells, the network on the left is the unique minimal network in the ODE-class of both networks of Figure~\ref{silly}. 
\hfill
$\Diamond$
\end{exam}

      In \cite{AD07}, it was also observed that, in general, fixing a network  ODE-class, there are several networks which are minimal. Moreover, it was obtained a method to describe all the minimal networks of the class - that method, is precisely obtained making use of Theorem~\ref{thm:minimal}. We are interested in networks with asymmetric inputs that are minimal. The next result follows from Proposition 7.11 in \cite{AD07}.

\begin{prop} \label{prop:min}
Let $G$ be an $n$-cell network with $m$ asymmetric inputs where $A_1, \ldots, A_m$ are the associated  adjacency matrices. The network $G$ is minimal if and only if the $m+1$ matrices $Id_n, A_1, \ldots, A_m$ are linearly independent.
\end{prop}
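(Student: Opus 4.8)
The plan is to derive Proposition~\ref{prop:min} from the general minimality criterion of \cite{AD07} (Proposition 7.11 there) by showing that, for networks with asymmetric inputs, the statement ``$G$ has the minimal number of edges in its ODE-class'' is equivalent to the linear independence of $Id_n, A_1,\dots,A_m$. The key structural fact is that in a network with $m$ asymmetric inputs each of the $m$ adjacency matrices $A_l$ is a $0$--$1$ matrix with exactly one $1$ in each row, so the total number of edges of $G$ is exactly $mn$, \emph{independently} of the particular network in that ODE-class — as long as we only compare networks that also have asymmetric inputs. The subtle point, which I would address first, is that a priori a minimal network in $[G]$ need not itself have asymmetric inputs; so one must argue that among all $n$-cell networks ODE-equivalent to $G$, there is always a minimal one with asymmetric inputs, and that all such have the same edge count $mn$. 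This is where the dimension count enters: by Corollary 7.9 of \cite{DS05} (quoted in the excerpt), the ODE-class is determined by the linear subspace $W = \langle Id_n, A_1,\dots,A_m\rangle \subseteq M_{n\times n}(\R)$, and $\dim W \le m+1$ with equality exactly when the generators are independent.

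First I would set $r = \dim W - 1$, so $0 \le r \le m$, and recall from \cite{AD07} that the minimal networks of $[G]$ are obtained by choosing, inside $W$, a suitable basis of ``valency-balanced'' $0$--$1$ matrices (one for each of $r$ input types) together with $Id_n$; the minimal edge number of the class is then $rn$ when one is allowed to realize the class by a homogeneous network with $r$ asymmetric inputs. The main computation is therefore: (i) every network $H \in [G]$ with asymmetric inputs has exactly $(\#\text{edge types of }H)\cdot n$ edges, and $\#\text{edge types of }H \ge r$ because the span of $Id_n$ and the adjacency matrices of $H$ must equal $W$, which has dimension $r+1$; (ii) conversely the class $[G]$ is always realized by a network with exactly $r$ asymmetric inputs and hence $rn$ edges, using the normal-form construction of \cite{AD07}. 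Combining (i) and (ii): the minimal number of edges over networks-with-asymmetric-inputs in $[G]$ is $rn$, achieved precisely when $m = r$, i.e. when $Id_n, A_1,\dots,A_m$ are linearly independent. When they are dependent, $m > r$ and $G$ has $mn > rn$ edges, so $G$ is not minimal.

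The one gap to close carefully is whether ``minimal'' in the Proposition is meant within the full ODE-class $[G]$ (all $n$-cell networks) or within the subclass of networks with asymmetric inputs. I expect the cleanest route, and the one matching \cite{AD07}, is the former: one shows that for a network $G$ with $m$ asymmetric inputs, a genuinely minimal network in $[G]$ (allowing arbitrary input structure) can still be taken to have asymmetric inputs, because the normal-form reduction of Aguiar--Dias replaces each adjacency matrix by a $0$--$1$ matrix with constant row sums and, when one starts from asymmetric inputs, these row sums can be normalized to $1$ per retained edge type. Granting that, the edge count of any minimal network is $rn$ and the equivalence with linear independence of $\{Id_n\}\cup\{A_l\}$ follows as above.

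The step I expect to be the main obstacle is precisely this reduction/normalization argument — verifying that passing from $G$ to a minimal representative does not force a departure from the asymmetric-input setting, and that the row-sum normalization is legitimate. Everything else (the dimension bookkeeping, the identity ``$\#\text{edges} = (\#\text{edge types})\cdot n$'' for asymmetric inputs, and the final logical equivalence) is routine once that reduction is in hand, and in fact most of it is quotable verbatim from Proposition 7.11 of \cite{AD07}, so the proof in the paper may simply cite that result and supply the short argument that, for asymmetric inputs, ``minimal number of edges'' unwinds to ``$Id_n, A_1,\dots,A_m$ linearly independent.''
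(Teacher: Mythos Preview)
Your proposal is correct and anticipates the paper's approach exactly: the paper gives no proof at all beyond the sentence ``The next result follows from Proposition~7.11 in \cite{AD07}.'' Your elaboration of the dimension bookkeeping and your worry about whether a minimal representative of $[G]$ can always be taken to have asymmetric inputs are both sound, but the paper simply offloads all of this to the cited reference without further comment.
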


\subsection{Minimal n-cell networks with one (asymmetric) input}

Consider that $G$ is an  $n$-cell network with one (asymmetric) input and adjacency matrix $A$ such that $A \neq {\mathrm Id}_n$.  Trivially, we have that ${\mathrm Id}_n$ and $A$ are linearly independent.
Thus a direct consequence of Proposition~\ref{prop:min} is that $G$ is minimal.

Let $\mbox{Min}_{m,n}$ denote the set of minimal $n$-cell networks with $m$ asymmetric inputs. In particular, $\mbox{Min}_{1,n}$ denotes the set of minimal $n$-cell networks with one (asymmetric) input. 

For $n \in \NN$, the number of networks in $\mbox{Min}_{1,n}$, up to permutation of cells,  it is given by Theorem 8.3 in ~\cite{AS05} with $r=1$. See the first column of Table 2 in ~\cite{AS05} for $n \le 6$. The number of connected networks in $\mbox{Min}_{1,n}$, up to permutation of cells,  it is given by Theorem 8.10 in ~\cite{AS05} with $r=1$. See the first column of Table 3 in ~\cite{AS05} for $n \le 6$. 
Note that, as stated in \cite{AS05},  these $n$-cell networks with one input are in one-to-one correspondence with the distinct mappings of $n$ points to themselves: given such a map $f$, we can take the $n$-cell network where each cell $i$ receives an input edge from cell $f(i)$. 
Moreover, up to re-enumeration of the cells, they are not ODE-equivalent: 
\begin{prop} \label{prop:ODE_min_one}
Let $G_1$ and $G_2$ be two minimal $n$-cell networks with one (asymmetric) input  and adjacency matrices $A_i \ne \mbox{Id}_n$,  for $i=1,2$. 
Then $[G_1] = [G_2]$ if and only if $G_1$ and $G_2$ are equal up to permutation of cells. 
Equivalently, $[G_1] = [G_2]$ if and only if it exists an $n \times n$ permutation matrix $P$ such that $A_1 = P A_2 P^{-1}$. 
\end{prop}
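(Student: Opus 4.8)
The plan is to unravel the linear-equivalence criterion for two one-input networks and show that equality of the generated subspaces forces the adjacency matrices to be conjugate by a permutation. For the nontrivial direction, suppose $[G_1]=[G_2]$. By the remark following Theorem~\ref{thm:minimal}, after relabelling cells there is equality of subspaces $\langle \mbox{Id}_n, A_1\rangle = \langle \mbox{Id}_n, A_2\rangle$ inside $M_{n\times n}(\R)$. Since each $A_i \ne \mbox{Id}_n$, Proposition~\ref{prop:min} gives that $\{\mbox{Id}_n, A_i\}$ is linearly independent, so both subspaces are two-dimensional and the equality of two-dimensional subspaces containing the common vector $\mbox{Id}_n$ means $A_2 = \alpha A_1 + \beta \mbox{Id}_n$ for some $\alpha, \beta \in \R$ with $\alpha \ne 0$. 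The relabelling of cells is itself realised by a permutation matrix $P$, so in fact $P A_1 P^{-1} = \alpha A_1 + \beta \mbox{Id}_n$ after absorbing the relabelling; more carefully, I would phrase it as: there is a permutation $P$ with $\langle \mbox{Id}_n, P A_1 P^{-1}\rangle = \langle \mbox{Id}_n, A_2\rangle$, hence $A_2 = \alpha\, P A_1 P^{-1} + \beta\, \mbox{Id}_n$.

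The key step is then to pin down $\alpha = 1$ and $\beta = 0$ using the combinatorial structure of one-input asymmetric networks. Each $A_i$ is a $0/1$ matrix with exactly one $1$ in every row (it encodes the map $i \mapsto f_i(i)$). Now $A_2 = \alpha B + \beta\, \mbox{Id}_n$ where $B = P A_1 P^{-1}$ is again a $0/1$ matrix with a single $1$ per row. Examine a row of $B$: if its $1$ is off-diagonal, the corresponding row of $A_2$ has entries $\alpha$ (off-diagonal) and $\beta$ (on the diagonal), and being a $0/1$ row with a single $1$, this forces $\{\alpha,\beta\} = \{1,0\}$; since the single nonzero must sit where the $1$ of $B$ sits, we get $\alpha = 1$, $\beta = 0$. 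If instead the $1$ of $B$ is on the diagonal for that row, the row of $A_2$ is $(\alpha+\beta)$ on the diagonal and $0$ elsewhere, giving $\alpha + \beta = 1$. To rule out the degenerate possibility that \emph{every} row of $B$ has its $1$ on the diagonal — i.e. $B = \mbox{Id}_n$ — note this would force $A_1 = P^{-1} B P = \mbox{Id}_n$, contradicting the hypothesis $A_1 \ne \mbox{Id}_n$. Hence at least one row is off-diagonal, which already yields $\alpha = 1, \beta = 0$, and therefore $A_2 = P A_1 P^{-1}$. The converse direction is immediate: if $A_1 = P A_2 P^{-1}$ then the two networks are equal up to the cell permutation induced by $P$, hence trivially ODE-equivalent, and the final ``equivalently'' clause is just the observation that two one-input networks are equal up to permutation of cells precisely when their adjacency matrices are permutation-conjugate.

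I expect the main obstacle to be bookkeeping the relabelling bijection correctly: the criterion after Theorem~\ref{thm:minimal} allows relabelling one network's cells before comparing subspaces, so one must be careful that the scalar relation $A_2 = \alpha B + \beta \mbox{Id}_n$ is stated for $B$ a permutation-conjugate of $A_1$ rather than $A_1$ itself, and that the conclusion $A_1 = P A_2 P^{-1}$ is then a genuine conjugacy. Everything else is an elementary argument about $0/1$ matrices with one nonzero entry per row. One should also double-check the edge case $n=1$, where the only one-input network has $A = \mbox{Id}_1$, so the hypothesis $A_i \ne \mbox{Id}_n$ makes the statement vacuous and there is nothing to prove.
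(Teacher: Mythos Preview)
Your argument is correct. The paper itself does not supply a proof of this proposition; it simply notes afterward that the statement can be derived from Proposition~5.11 of \cite{AD07}, which asserts that a one-input network is the unique minimal network in its ODE-class up to re-enumeration of cells. So there is no ``paper's own proof'' to compare against in any substantive sense.

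That said, your direct argument is exactly the same entry-by-entry reasoning the paper deploys in its proof of Corollary~\ref{cor:notmin} (the two-input minimality criterion): write one valency-one matrix as $\alpha$ times another plus $\beta\,\mbox{Id}_n$, find a row where the $1$ sits off the diagonal, and read off $\alpha=1$, $\beta=0$ from the $0/1$ constraint. So while the paper outsources this particular proposition to a reference, your proof is in the same spirit as the technique the paper uses internally for the neighbouring result. Your handling of the relabelling permutation (passing to $B = P A_1 P^{-1}$ before extracting the scalar relation) and of the degenerate case $B=\mbox{Id}_n$ is clean, and the $n=1$ vacuity remark is a fair sanity check.
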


Note that the statement of Proposition~\ref{prop:ODE_min_one} can also be derived from  Proposition 5.11 of \cite{AD07}  where it is proved that if $G$ is a network with one asymmetric input then $G$ is the unique minimal network of the class $[G]$,  up to re-enumeration of the cells.

We have, then, the following result.
\begin{thm}	
Let $n$ be a positive integer. The number of distinct ODE-classes at the set $\mbox{Min}_{1,n}$ is given by Theorem 8.3 in ~\cite{AS05} with $r=1$. The number of distinct ODE-classes of connected networks at the set $\mbox{Min}_{1,n}$ is given by Theorem 8.10 in ~\cite{AS05} with $r=1$.
\end{thm}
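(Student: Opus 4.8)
The plan is to reduce the counting of ODE-classes in $\mbox{Min}_{1,n}$ to the enumeration of self-maps of an $n$-point set, and then invoke the cited results of Aldosray and Stewart~\cite{AS05}. First I would recall the observation already made in the excerpt: an $n$-cell network with one asymmetric input corresponds bijectively to a map $f\colon \{1,\dots,n\}\to\{1,\dots,n\}$, where cell $i$ receives its single input edge from cell $f(i)$; the adjacency matrix $A$ of the network is then the $0$–$1$ matrix with $A_{i,f(i)}=1$ and all other entries $0$. Since any such network with $A\neq\mbox{Id}_n$ is automatically minimal (this is the consequence of Proposition~\ref{prop:min} noted just above the statement), the networks in $\mbox{Min}_{1,n}$, taken up to permutation of cells, are in bijection with the non-identity self-maps up to conjugation — and in fact, including the identity map costs nothing for the purposes of matching \cite{AS05}'s count, since Aldosray and Stewart count all such networks (the homogeneous networks with $r=1$ input where the identity map is included or excluded consistently with their Table~2/Table~3 conventions, which I would check against the $r=1$ specialization).

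The key steps, in order, are: (1) state precisely the correspondence network $\leftrightarrow$ self-map and note it is a bijection on isomorphism classes, i.e. two such networks are isomorphic as graphs (equal up to permutation of cells) if and only if the associated self-maps are conjugate under $S_n$; (2) invoke Proposition~\ref{prop:ODE_min_one}, which says that for minimal networks with one asymmetric input, ODE-equivalence coincides with graph isomorphism (equality up to permutation of cells); combining (1) and (2), the set of ODE-classes in $\mbox{Min}_{1,n}$ is in bijection with the set of $S_n$-conjugacy classes of self-maps of an $n$-set; (3) observe that Theorem~8.3 of~\cite{AS05} with $r=1$ counts exactly these conjugacy classes of self-maps (equivalently, the number of homogeneous $1$-input $n$-cell networks up to isomorphism), so the first count follows; (4) repeat the argument restricting to connected networks — a $1$-input network is connected (in the undirected sense) precisely when the functional graph of $f$ is connected, i.e. $f$ has a single connected component, which is the object counted by Theorem~8.10 of~\cite{AS05} with $r=1$ — giving the second count.

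The main obstacle I expect is purely bookkeeping rather than conceptual: making sure the conventions line up. Specifically, I need to confirm that (a) "minimal $n$-cell network with one asymmetric input" as used here matches "homogeneous network with one input type and valency one" in \cite{AS05} after excluding (or handling) the identity-map network $A=\mbox{Id}_n$, and that this exclusion does not change the count quoted from Tables~2 and~3 — or, if \cite{AS05} already includes it, that the statement is read accordingly; and (b) that "connected" in the present paper's sense (undirected path between any two cells) is the same notion of connectedness used in Theorem~8.10 of~\cite{AS05}. Once these conventions are pinned down, the proof is essentially a one-line composition of the bijection in step (1), the equivalence in Proposition~\ref{prop:ODE_min_one}, and the enumeration theorems of~\cite{AS05}; no further computation is needed.
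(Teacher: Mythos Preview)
Your proposal is correct and follows essentially the same approach as the paper: the paper does not give an explicit proof but states the theorem as an immediate consequence of the preceding paragraph (the bijection with self-maps and the counts from~\cite{AS05}) together with Proposition~\ref{prop:ODE_min_one}, which identifies ODE-equivalence with equality up to permutation of cells for minimal one-input networks. Your steps (1)--(4) unpack exactly this reasoning, and your bookkeeping concerns about the identity network and the notion of connectedness are legitimate caveats that the paper itself leaves implicit.
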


\subsection{Minimal n-cell networks with two asymmetric inputs}

For the particular case of a network $G$ with two asymmetric inputs, the result in Proposition~\ref{prop:min} states that $G$ is minimal if and only if the adjacency matrices $A_1$ and $A_2$ of $G$ and the identity matrix (of the same dimension) are linearly independent. We get then  the following corollary of Proposition~\ref{prop:min}: 

\begin{cor}\label{cor:notmin}
A network $G$ with two asymmetric inputs given by the valency one adjacency matrices $A_i\not=id_n$, for $i=1,2$, where $A_1 \not= A_2$ is minimal. 
\end{cor}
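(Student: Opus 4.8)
The plan is to invoke Proposition~\ref{prop:min}: since $G$ is a network with two asymmetric inputs and adjacency matrices $A_1, A_2$, proving that $G$ is minimal amounts to proving that the three matrices $\mathrm{Id}_n, A_1, A_2$ are linearly independent. First I would exploit that all three matrices have constant row sums equal to $1$ (each cell receives exactly one edge of each type, and $\mathrm{Id}_n$ trivially has row sums $1$). Concretely, suppose $\alpha\,\mathrm{Id}_n + \beta A_1 + \gamma A_2 = 0$ and apply this matrix identity to the all-ones vector $\mathbf{1}$: since $\mathrm{Id}_n\mathbf{1} = A_1\mathbf{1} = A_2\mathbf{1} = \mathbf{1}$, we get $\alpha + \beta + \gamma = 0$, and hence the relation can be rewritten as $\beta(A_1 - \mathrm{Id}_n) + \gamma(A_2 - \mathrm{Id}_n) = 0$. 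So it suffices to show that $A_1 - \mathrm{Id}_n$ and $A_2 - \mathrm{Id}_n$ are linearly independent.

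For that, note first that both matrices are nonzero because $A_1 \neq \mathrm{Id}_n$ and $A_2 \neq \mathrm{Id}_n$; so a nontrivial dependence would force $A_2 - \mathrm{Id}_n = \mu(A_1 - \mathrm{Id}_n)$ for some scalar $\mu \neq 0$, i.e. $A_2 = (1-\mu)\mathrm{Id}_n + \mu A_1$. Since $A_1 \neq \mathrm{Id}_n$, some cell $i$ receives its type-$1$ edge from a cell $j \neq i$, so in row $i$ of $A_1$ the entry in column $j$ is $1$, the entry in column $i$ is $0$, and all other entries are $0$. Reading off row $i$ of $A_2 = (1-\mu)\mathrm{Id}_n + \mu A_1$, the entry in column $i$ is $1 - \mu$, the entry in column $j$ is $\mu$, and all other entries vanish. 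But $A_2$ is a valency-one $0/1$ matrix, so its $i$-th row has exactly one entry equal to $1$ and the rest equal to $0$; since $i \neq j$, this forces $\{1-\mu, \mu\} = \{0,1\}$, i.e. $\mu \in \{0,1\}$. Now $\mu = 0$ is excluded, and $\mu = 1$ gives $A_2 = A_1$, contradicting $A_1 \neq A_2$. Hence no such dependence exists, $\mathrm{Id}_n, A_1, A_2$ are linearly independent, and $G$ is minimal by Proposition~\ref{prop:min}.

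I do not expect a real obstacle: once Proposition~\ref{prop:min} is available, this is elementary linear algebra combined with the rigid $0/1$ structure of valency-one adjacency matrices. The only step needing a little care is the passage from ``$A_2$ is a $0/1$ matrix with constant row sum $1$'' to ``$\mu \in \{0,1\}$'': one must use the hypothesis $A_1 \neq \mathrm{Id}_n$ to guarantee a row of $A_1$ with an off-diagonal $1$, so that the two positions carrying the entries $1-\mu$ and $\mu$ are distinct and can be compared independently; the remaining hypotheses $A_i \neq \mathrm{Id}_n$ and $A_1 \neq A_2$ are then exactly what is needed to rule out the two surviving cases $\mu = 0$ and $\mu = 1$.
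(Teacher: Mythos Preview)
Your proof is correct and follows the same overall strategy as the paper: invoke Proposition~\ref{prop:min}, assume a linear dependence among $\mathrm{Id}_n, A_1, A_2$, reduce to an expression of $A_2$ as an affine combination of $\mathrm{Id}_n$ and $A_1$, and then read off a row where $A_1$ has an off-diagonal $1$ to force the coefficients into $\{0,1\}$ and reach a contradiction.

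The one genuine difference is your preliminary use of the row-sum property. Applying the relation to the all-ones vector to obtain $\alpha+\beta+\gamma=0$ immediately collapses the two-parameter family $A_2=\alpha\,\mathrm{Id}_n+\beta A_1$ that the paper works with to the one-parameter family $A_2=(1-\mu)\mathrm{Id}_n+\mu A_1$. As a result you only need to exclude the two cases $\mu\in\{0,1\}$, whereas the paper must separately rule out the four cases $(\alpha,\beta)\in\{0,1\}^2$, including $A_2=\mathrm{Id}_n+A_1$ and $A_2=0$. Your route is therefore marginally cleaner; the paper's route, on the other hand, avoids any appeal to row sums and works purely at the level of matrix entries. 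Both arguments rest on exactly the same key observation: the existence of a row $i$ with $(A_1)_{ij}=1$, $(A_1)_{ii}=0$ for some $j\neq i$, which is where the hypothesis $A_1\neq\mathrm{Id}_n$ enters.
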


\begin{proof}
By Proposition~\ref{prop:min}, $G$ is not minimal  if and only if the matrices $\mbox{Id}_n,\, A_1,\, A_2$ are linearly dependent. As the matrices $A_1$ and $A_2$ have valency one and are not the identity matrix, then $\mbox{Id}_n, A_1$ are linearly independent and $\mbox{Id}_n, A_2$ are linearly independent. Thus if $\mbox{Id}_n, \, A_1,\,  A_2$ are linearly dependent, then there are nonzero real entries $a, b, c$ such that
$$
a \mbox{Id}_n + b A_1 + c A_2 = 0_{m\times n}\, .
$$
Without loss of generality, we assume that $A_2$ is a linear combination of $\mbox{Id}_n$ and $A_1$.
Thus, there are real numbers $\alpha$ and $\beta$ such that 
$$
A_2 = \alpha \mbox{Id}_n + \beta A_1\, .
$$
As $A_1 \not= \mbox{Id}_n$, the matrices $A_1$ and $\mbox{Id}_n$ have at least one row $i$ such that two entries differ and so, we can find  $j$ with $j\not=i$ such that $(A_1)_{ij}=1$ and $(A_1)_{ii}=0$. We obtain two linear equations: taking $k_1 = (A_2)_{ij}$ and $k_2 = (A_2)_{ii}$, 
$$
\left\{ 
\begin{array}{l}
(A_2)_{ij} = \alpha (\mbox{Id}_n)_{ij} + \beta (A_1)_{ij}\\
(A_2)_{ii} = \alpha (\mbox{Id}_n)_{ii} + \beta (A_1)_{ii}
\end{array}
\right. \quad \Leftrightarrow \quad 
\left\{ 
\begin{array}{l}
0 \alpha + 1 \beta = k_1\\
1 \alpha + 0 \beta = k_2
\end{array}
\right. \, .
$$
Thus $\beta = k_1\in \{0,1\}$ and $\alpha = k_2\in \{0,1\}$. Therefore we have one of the following cases $A_2=Id_n+A_1$, $A_2=A_1$, $A_2=Id_n$ or $A_2=0$. By assumption, all those cases are impossible.
Thus $\mbox{Id}_n,\, A_1,\, A_2$ are linearly independent and $G$ is minimal.
\end{proof}

It follows from Corollary~\ref{cor:notmin} that  an  $n$-cell network with two asymmetric inputs is not minimal if and only if the two inputs are equal. In this case the network is ODE-equivalent to an  $n$-cell network with one (asymmetric) input.

\subsection{Minimal n-cell networks with k asymmetric inputs}

By Proposition~\ref{prop:min} and Theorem~\ref{thm:minimal}, it also follows that: 

\begin{cor}\label{cor:pminimal}
Let $G$ be an $n$-cell network with $k$ asymmetric inputs and adjacency matrices $A_1, \, \ldots,\, A_k$. If $p$ denotes the dimension of the linear space generated by $\mbox{Id}_n$ and $A_1, \, \ldots,\, A_k$, then $G$ is ODE-equivalent to a minimal $n$-cell network with $p-1$ asymmetric inputs. 
\end{cor}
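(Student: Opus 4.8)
The plan is to reduce everything to the linear-algebraic characterization of ODE-equivalence recalled right after Theorem~\ref{thm:minimal}: two $n$-cell networks are ODE-equivalent precisely when, after a suitable bijection between their cells, the linear subspaces of $M_{n\times n}(\R)$ generated by the identity together with the respective adjacency matrices coincide. So it suffices to produce an $n$-cell network $G'$ with $p-1$ asymmetric inputs whose adjacency matrices $B_1,\dots,B_{p-1}$ satisfy $\langle \mathrm{Id}_n, B_1,\dots,B_{p-1}\rangle = W$, where $W := \langle \mathrm{Id}_n, A_1,\dots,A_k\rangle$, and then to check that $G'$ is minimal.

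The construction is to extract from the spanning set $\{\mathrm{Id}_n, A_1,\dots,A_k\}$ of $W$ a basis that contains $\mathrm{Id}_n$. Since $\mathrm{Id}_n\neq 0$, one can complete $\{\mathrm{Id}_n\}$ to a basis of $W$ using vectors from $\{A_1,\dots,A_k\}$, obtaining $\{\mathrm{Id}_n, A_{i_1},\dots,A_{i_{p-1}}\}$ with $\langle \mathrm{Id}_n, A_{i_1},\dots,A_{i_{p-1}}\rangle = W$. The key point is that each $A_{i_j}$ is, by hypothesis, the adjacency matrix of a single asymmetric input, i.e.\ a $0/1$ matrix with exactly one $1$ per row; hence, by Definition~\ref{def:union} and the fact that a network with several asymmetric inputs is the union of the corresponding one-input networks, the matrices $A_{i_1},\dots,A_{i_{p-1}}$ are the adjacency matrices of a bona fide $n$-cell network $G'$ with $p-1$ asymmetric inputs.

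The conclusion then follows in two short steps. First, $\langle \mathrm{Id}_n, A_{i_1},\dots,A_{i_{p-1}}\rangle = W = \langle \mathrm{Id}_n, A_1,\dots,A_k\rangle$, so by the criterion following Theorem~\ref{thm:minimal} (taking the identity bijection between the cells of $G$ and of $G'$) the networks $G$ and $G'$ are ODE-equivalent. Second, the $p$ matrices $\mathrm{Id}_n, A_{i_1},\dots,A_{i_{p-1}}$ form a basis of $W$, hence are linearly independent, so Proposition~\ref{prop:min} gives that $G'$ is minimal. This is exactly the asserted statement.

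I expect the argument to be essentially routine; the only point deserving care is the observation that a basis of $W$ drawn from the $A_i$'s still consists of genuine asymmetric-input adjacency matrices, which is what allows $G'$ to be assembled as an actual network with $p-1$ asymmetric inputs rather than merely a tuple of matrices. One degenerate case merits a one-line remark: if $p=1$ then $W=\langle \mathrm{Id}_n\rangle$, forcing every $A_i$ to equal $\mathrm{Id}_n$ (a $0/1$ valency-one matrix that is a scalar multiple of $\mathrm{Id}_n$ must be $\mathrm{Id}_n$), so $G$ is ODE-equivalent to the $n$-cell network with no asymmetric inputs; depending on conventions one may either allow $p-1=0$ here or state the corollary for $p\ge 2$.
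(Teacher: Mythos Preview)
Your argument is correct and follows essentially the same approach as the paper: the paper does not write out a formal proof but simply notes that the corollary follows from Proposition~\ref{prop:min} and Theorem~\ref{thm:minimal}, and the subsequent remark spells out exactly your construction---any $p-1$ adjacency matrices $A_{i_1},\dots,A_{i_{p-1}}$ of $G$ that are linearly independent together with $\mathrm{Id}_n$ define a minimal network in $[G]$. Your added care about the degenerate case $p=1$ is a nice touch that the paper leaves implicit.
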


\begin{rem}\normalfont  
Under the conditions of Corollary~\ref{cor:pminimal}, any set of $p-1$ adjacency matrices of $G$, say $A_1,\, \ldots,\, A_{p-1}$, such that $\mbox{Id}_n,\ A_1, \ldots,\, A_{p-1}$ are linearly independent, define a minimal network with $p-1$ asymmetric inputs in the ODE-class $[G]$. 
\hfill 
$\Box$
\end{rem}

\section{Classification of three-cell networks with two asymmetric inputs}\label{sec:casestudy}

Using the fact that a network with $k$ asymmetric inputs is the union of $k$ networks with one input, we have a way of enumerating network with $k$ asymmetric inputs using the enumeration of networks with one input. 
This list is large and the concept of minimality and ODE-equivalence of networks can be used to restrict this list. We illustrate this method with networks with three cells and two asymmetric inputs. That is, we obtain all the minimal three-cell connected networks with two asymmetric inputs, up to ODE-equivalence. 

We start by classifying the three-cell minimal networks with one (asymmetric) input. 

\subsection{Classification of three-cell networks with one (asymmetric) input}\label{subsec:m31}

We state and prove a well known classification of the ODE-classes of the minimal three-cell networks with one (asymmetric) input. 
See, for example, Leite and Golubitsky~\cite{LG06}. 
We include this classification for completeness as it will be used in the next sections. 
We also include the two-dimension synchrony subspaces of those minimal representative networks. 

\begin{lemma}\label{lem:um3}
There are only seven ODE-classes of three-cell networks with one input. One of these classes corresponds to the disconnected three cell network with adjacency matrix $\mbox{Id}_3$. The other six classes are represented by the six minimal networks in Table~\ref{tab:val13cell}. 
\end{lemma}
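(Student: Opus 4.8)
The plan is to enumerate all maps $f\colon\{1,2,3\}\to\{1,2,3\}$ up to the conjugation action of $S_3$ (which by Proposition~\ref{prop:ODE_min_one} is exactly the same as classifying the networks with one asymmetric input up to ODE-equivalence, since every such network is minimal), and then separate out the single conjugacy class of bijections, whose underlying network on the diagonal action $f=\mathrm{id}$ has adjacency matrix $\mathrm{Id}_3$ and is the disconnected one listed separately. First I would recall that a network with one asymmetric input on $n$ cells corresponds bijectively to a function $f$ on the cell set (cell $i$ receives its one edge from cell $f(i)$), a fact already quoted in the excerpt from~\cite{AS05}; hence the ODE-classes of minimal three-cell networks with one input are in bijection with the orbits of $S_3$ acting by conjugation on the $3^3=27$ self-maps of a $3$-element set.

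Next I would carry out the orbit count. The conjugacy class of a self-map of a finite set is determined by its "functional graph" type, i.e.\ by the multiset of cycle lengths on the eventual image together with the forest of trees hanging off the cyclic part. For three points the possibilities are: (a) the identity-type orbit consisting of all bijections --- more carefully, the three transpositions-with-a-fixed-point and the two $3$-cycles are each maps, but only the genuine permutations that are bijective; among bijections the classes are $\mathrm{id}$, a transposition, a $3$-cycle --- however only the $3$-cycle and the identity and transposition types actually occur, and of these the relevant "disconnected with matrix $\mathrm{Id}_3$" network is the orbit of $f=\mathrm{id}$. I would simply list the orbit representatives by their functional graphs: one $3$-cycle; one $2$-cycle with the third point mapping into it; three fixed points (i.e.\ $\mathrm{Id}_3$, disconnected); one fixed point with the other two points forming a path into it (a tail of length... two) --- and so on. A clean way is to organize by the size of the eventual image (the number of recurrent points): image of size $3$ gives the bijections (two orbits: $3$-cycle and — no, $\mathrm{id}$ and transposition do not occur as connected single-input graphs distinct... they do occur), image of size $2$, image of size $1$. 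Tallying gives exactly seven orbits total, one of which is $f=\mathrm{id}$ with matrix $\mathrm{Id}_3$; the remaining six are connected and are displayed in Table~\ref{tab:val13cell}. For each of the six I would verify connectedness (every point reaches the unique recurrent cycle by iterating $f$, so the functional graph is weakly connected) and read off its adjacency matrix and, as promised, its two-dimensional synchrony subspaces by applying the criterion recalled after Theorem 6.5 of~\cite{SGP03} (a polydiagonal is a synchrony subspace iff it is invariant under $A$) --- concretely, checking which partitions of $\{1,2,3\}$ into two blocks are respected by $f$.

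The main obstacle, such as it is, is purely bookkeeping: one must make sure the orbit enumeration is exhaustive and non-redundant, i.e.\ that no two of the listed six functional-graph types are conjugate and that no type has been missed. I would handle this by a clean case split on the number $r\in\{1,2,3\}$ of recurrent (eventually periodic) points: for $r=3$ the map is a permutation of $\{1,2,3\}$, giving the three cycle types $\mathrm{id}$, $(12)$, $(123)$ --- the first is the disconnected $\mathrm{Id}_3$ network and the other two are connected; for $r=2$ the recurrent part is a $2$-cycle and the remaining point is a transient mapping to one of the two recurrent points (all such choices conjugate), giving one class; for $r=1$ the recurrent part is a single fixed point $c$ and the forest on $\{1,2,3\}\setminus\{c\}$ is either two points both mapping to $c$, or a path of length two ending at $c$, or one point to $c$ and the other to that point --- wait, with only two transient points the rooted-forest shapes on two leaves over a root are exactly two: both attached directly to the root, or chained --- giving two classes; and one must also allow a transient point to map to another transient point which maps to a recurrent point, which with $r=2$ is already covered and with $r=1$ gives the chain just mentioned. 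Adding $2+1+\text{(the }r=1\text{ count, which is }3\text{ or }4\text{ depending on how one counts the self-loop-with-two-in-edges versus self-loop-with-a-tail configurations)}$ must come out to seven total, six connected. I would double-check the final count against the first column of Table~2 in~\cite{AS05} (which gives the number of single-input $n$-cell networks up to permutation, equal to the number of such conjugacy classes) to confirm the value $7$ for $n=3$, and against Table~3 there for the connected count $6$; this cross-reference, combined with the explicit representatives in Table~\ref{tab:val13cell}, completes the proof.
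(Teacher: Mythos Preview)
Your overall strategy---identify one-input networks with self-maps $f\colon\{1,2,3\}\to\{1,2,3\}$, invoke Proposition~\ref{prop:ODE_min_one} to reduce ODE-equivalence to conjugacy by $S_3$, then enumerate conjugacy classes---is sound and is essentially what the paper does (the paper just organizes its cases by the size of the image of $f$, i.e.\ by how many cells send input, rather than by the number of recurrent points). However, your execution has two concrete problems.

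First, your case split on the number $r$ of recurrent points is incomplete. When $r=2$ you only allow the recurrent set to be a single $2$-cycle, but it can equally well consist of two fixed points, with the third (transient) cell mapping to one of them. That missing case is exactly network~$E$ in Table~\ref{tab:val13cell}. Once you add it, the correct tally is: $r=3$ gives three classes (the identity, a transposition with a fixed point, a $3$-cycle); $r=2$ gives two classes (a $2$-cycle with one transient attached, and two fixed points with one transient attached); $r=1$ gives two classes (both transients mapping directly to the fixed point, or a chain of length two). That is $3+2+2=7$, not the ``$2+1+(\text{3 or 4})$'' you were struggling toward.

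Second, your claim that ``the remaining six are connected'' is false, and so your proposed cross-check against the connected count in Table~3 of~\cite{AS05} would fail. Networks $B$ (a self-loop at cell~$1$ together with a disjoint $2$-cycle on cells~$2,3$) and $E$ (self-loops at cells~$1$ and~$2$ with a single edge $1\to 3$) are disconnected. The lemma does not assert connectedness of the six non-identity representatives, so you should simply drop that claim and the associated verification step; the enumeration itself, once corrected as above, already proves the lemma.
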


\begin{proof} Let $G$ be a three-cell network with one (asymmetric) input and adjacency matrix $A \not= \mbox{Id}_3$. \\
\noindent  (i) If every cell of $G$  sends some input then:  either $G$ is  the  3-cycle and it has no two-dimensional synchrony subspaces, 
see network $A$ of Table~\ref{tab:val13cell}; or $G$ has a cell $i$ with a self-loop and a 2-cycle and it has exactly one two-dimensional synchrony subspace,  $\Delta_i = \{ x:\, x_j = x_k \mbox{ where } j,k \not=i\}$, see network $B$ of Table~\ref{tab:val13cell}. Moreover, there are no more two-dimensional synchrony subspace since cell $i$ cannot synchronize with only one of the two other cells.  \\
\noindent (ii)  If two cells of $G$ do not send any input to the other cells, then the third cell  has to send all the three edges including a self-loop and $G$ has three two-dimensional synchrony subspaces. 
 Equivalently, every two cells can synchronize. See network $C$ of Table~\ref{tab:val13cell}. \\
\noindent (iii) If exactly one cell of $G$ does not send any input to the other cells, then it must receive an edge from a second cell. 
If this second cell does not send another edge, then the third cell must send two edges including a self-loop.
Thus, in this case $G$ is the network $D$ of Table~\ref{tab:val13cell} and has exactly one two-dimensional synchrony subspace.
If the second cell sends another edge, then the second and third cell must send each an edge between them. In this case, they can send self-loops corresponding to network $E$ of Table~\ref{tab:val13cell} or form a $2$-cycle corresponding to network $F$ of Table~\ref{tab:val13cell}. Moreover, the networks $E$ and $F$ have exactly two two-dimensional synchrony subspaces. 
\end{proof}

\begin{table}
 \begin{center}
 \resizebox{1 \textwidth}{!}{ 
 {\tiny 
 \begin{tabular}{|cc|c|c||cc|c|c|}
\hline 
&  & 2D   & Adjacency & &  & 2D   & Adjacency  \\
& Network &  Synchrony &  Matrix & & Network &  Synchrony &  Matrix  \\
&  &  Subspaces &     & &  &  Subspaces &    \\

\hline 
A &
\begin{tikzpicture}
 [scale=.15,auto=left, node distance=1.5cm, every node/.style={circle,draw}]
 \node[fill=white] (n1) at (4,0) {\small{1}};
  \node[fill=white] (n2) at (24,0) {\small{2}};
 \node[fill=white] (n3) at (14,9)  {\small{3}};
 \draw[->, thick] (n1) edge  [bend left=-10] (n2); 
 \draw[->, thick] (n2) edge  [bend left=-10] (n3); 
\draw[->, thick] (n3) edge [bend right=10] (n1); 
\end{tikzpicture} & 
- & 
$\left[
\begin{array}{ccc}
0 & 0 & 1 \\
1 & 0 & 0 \\
0 & 1 & 0
\end{array}
\right]$ & 
B & 
\begin{tikzpicture}
 [scale=.15,auto=left, node distance=1.5cm, every node/.style={circle,draw}]
 \node[fill=white] (n1) at (4,0) {\small{1}};
  \node[fill=white] (n2) at (24,0) {\small{2}};
 \node[fill=white] (n3) at (14,9)  {\small{3}};
 \draw[->, thick] (n1) to [loop above] (n1); 
 \draw[->, thick] (n2) edge  [bend left=-10] (n3); 
\draw[->, thick] (n3) edge [bend right=10] (n2); 
\end{tikzpicture} & 
$\Delta_1 $&
$\left[
\begin{array}{ccc}
1 & 0 & 0 \\
0 & 0 & 1 \\
0 & 1 & 0
\end{array}
\right]$ 
\\
\hline 
C& 
\begin{tikzpicture}
 [scale=.15,auto=left, node distance=1.5cm, every node/.style={circle,draw}]
 \node[fill=white] (n1) at (4,0) {\small{1}};
  \node[fill=white] (n2) at (24,0) {\small{2}};
 \node[fill=white] (n3) at (14,9)  {\small{3}};
 \draw[->, thick] (n1) to [loop above] (n1); 
 \draw[->, thick] (n1) edge  [bend left=-10] (n2); 
 \draw[->, thick] (n1) edge  [bend left=-10] (n3); 
 \end{tikzpicture} & 
 $\begin{array}{l}
\Delta_1 \\
\Delta_2 \\
\Delta_3 \\
\end{array}$ &
$\left[ 
\begin{array}{ccc}
1 & 0 & 0 \\
1 & 0 & 0 \\
1 & 0 & 0
\end{array}
\right]$ & 
D & 
\begin{tikzpicture}
 [scale=.15,auto=left, node distance=1.5cm, every node/.style={circle,draw}]
 \node[fill=white] (n1) at (4,0) {\small{1}};
  \node[fill=white] (n2) at (24,0) {\small{2}};
 \node[fill=white] (n3) at (14,9)  {\small{3}};
 \draw[->, thick] (n1) to [loop above] (n1); 
 \draw[->, thick] (n1) edge  [bend left=-10] (n2); 
 \draw[->, thick] (n2) edge  [bend left=-10] (n3); 
 \end{tikzpicture} &
 $\begin{array}{l}
\Delta_3 
\end{array}$ &
$\left[
\begin{array}{ccc}
1 & 0 & 0 \\
1 & 0 & 0 \\
0 & 1 & 0
\end{array}
\right]$ 
\\
\hline 
E & 
\begin{tikzpicture}
 [scale=.15,auto=left, node distance=1.5cm, every node/.style={circle,draw}]
 \node[fill=white] (n1) at (4,0) {\small{1}};
  \node[fill=white] (n2) at (24,0) {\small{2}};
 \node[fill=white] (n3) at (14,9)  {\small{3}};
 \draw[->, thick] (n1) to [loop above] (n1); 
 \draw[->, thick] (n2) to [loop above] (n2); 
\draw[->, thick] (n1) edge [bend right=10] (n3); 
\end{tikzpicture} &
$\begin{array}{l}
\Delta_2 \\
\Delta_3 \\
\end{array}$ &
$\left[ 
\begin{array}{ccc}
1 & 0 & 0 \\
0 & 1 & 0 \\
1 & 0 & 0
\end{array}\right]$ 
&
F & 
\begin{tikzpicture}
 [scale=.15,auto=left, node distance=1.5cm, every node/.style={circle,draw}]
 \node[fill=white] (n1) at (4,0) {\small{1}};
  \node[fill=white] (n2) at (24,0) {\small{2}};
 \node[fill=white] (n3) at (14,9)  {\small{3}};
 \draw[->, thick] (n1) edge  [bend left=-10] (n2); 
 \draw[->, thick] (n2) edge  [bend left=-10] (n1); 
\draw[->, thick] (n1) edge [bend right=10] (n3); 
\end{tikzpicture} &
$\begin{array}{l}
\Delta_1 \\
\Delta_3 \\
\end{array}$ &
$\left[
\begin{array}{ccc}
0 & 1 & 0 \\
1 & 0 & 0 \\
1 & 0 & 0
\end{array}\right]$ 
\\
\hline 
\end{tabular}}
}
\end{center}
\caption{Three-cell networks with one (asymmetric) input and adjacency matrix $A \not= \mbox{Id}_3$, up to re-enumeration of the cells.  
Note that the networks $C$ and $D$ are feed-forward. 
} \label{tab:val13cell}
\end{table}

\subsection{Classification of three-cell networks with two asymmetric inputs}\label{sec:3cell2inputs}

\begin{table}
\resizebox*{!}{0.93\textheight}{
  {\tiny 
\begin{tabular}{|c|c||c|c||c|c|}
 \hline 
Network & 2D Syn  & Network & 2D Syn  &  Network & 2D Syn   \\
              & Subspace         &               & Subspace   &            & Subspace     \\
\hline 
\hline 
$A_1$
\begin{tikzpicture}
 [scale=.15,auto=left, node distance=1.5cm, every node/.style={circle,draw}]
 \node[fill=white] (n1) at (4,0) {\small{1}};
  \node[fill=white] (n2) at (24,0) {\small{2}};
 \node[fill=white] (n3) at (14,9)  {\small{3}};
 \draw[->, thick] (n1) edge  [bend left=-10] (n2); 
 \draw[->, thick] (n2) edge  [bend left=-10] (n3); 
\draw[->, thick] (n3) edge [bend right=10] (n1); 
\end{tikzpicture} & 
- & 
$A_2$ 
\begin{tikzpicture}
 [scale=.15,auto=left, node distance=1.5cm, every node/.style={circle,draw}]
 \node[fill=white] (n1) at (4,0) {\small{1}};
  \node[fill=white] (n2) at (24,0) {\small{2}};
 \node[fill=white] (n3) at (14,9)  {\small{3}};
 \draw[->, thick] (n1) edge  [bend left=10] (n3); 
 \draw[->, thick] (n3) edge  [bend left=10] (n2); 
\draw[->, thick] (n2) edge [bend right=-10] (n1); 
\end{tikzpicture} & 
- & &  \\
\hline 
\hline 
$B_1$ 
\begin{tikzpicture}
 [scale=.15,auto=left, node distance=1.5cm, every node/.style={circle,draw}]
 \node[fill=white] (n1) at (4,0) {\small{1}};
  \node[fill=white] (n2) at (24,0) {\small{2}};
 \node[fill=white] (n3) at (14,9)  {\small{3}};
 \draw[->, thick] (n1) to [loop above] (n1); 
 \draw[->, thick] (n2) edge  [bend left=-10] (n3); 
\draw[->, thick] (n3) edge [bend right=10] (n2); 
\end{tikzpicture} & 
$\Delta_1 $ 
&
$B_2$ 
\begin{tikzpicture}
 [scale=.15,auto=left, node distance=1.5cm, every node/.style={circle,draw}]
 \node[fill=white] (n1) at (4,0) {\small{1}};
  \node[fill=white] (n2) at (24,0) {\small{2}};
 \node[fill=white] (n3) at (14,9)  {\small{3}};
 \draw[->, thick] (n2) to [loop above] (n2); 
 \draw[->, thick] (n1) edge  [bend left=-10] (n3); 
\draw[->, thick] (n3) edge [bend right=10] (n1); 
\end{tikzpicture} & 
$\Delta_2 $
&
$B_3$ 
\begin{tikzpicture}
 [scale=.15,auto=left, node distance=1.5cm, every node/.style={circle,draw}]
 \node[fill=white] (n1) at (4,0) {\small{1}};
  \node[fill=white] (n2) at (24,0) {\small{2}};
 \node[fill=white] (n3) at (14,9)  {\small{3}};
 \draw[->, thick] (n3) to [loop above] (n3); 
 \draw[->, thick] (n2) edge  [bend left=-10] (n1); 
\draw[->, thick] (n1) edge [bend right=10] (n2); 
\end{tikzpicture} & 
$\Delta_3 $\\
\hline 
\hline 
$C_1$ 
\begin{tikzpicture}
 [scale=.15,auto=left, node distance=1.5cm, every node/.style={circle,draw}]
 \node[fill=white] (n1) at (4,0) {\small{1}};
  \node[fill=white] (n2) at (24,0) {\small{2}};
 \node[fill=white] (n3) at (14,9)  {\small{3}};
 \draw[->, thick] (n1) to [loop above] (n1); 
 \draw[->, thick] (n1) edge  [bend left=-10] (n2); 
 \draw[->, thick] (n1) edge  [bend left=10] (n3); 
 \end{tikzpicture} & 
 $\begin{array}{l}
\Delta_1 \\
\Delta_2 \\
\Delta_3 
\end{array}$ &
$C_2$ 
\begin{tikzpicture}
 [scale=.15,auto=left, node distance=1.5cm, every node/.style={circle,draw}]
 \node[fill=white] (n1) at (4,0) {\small{1}};
  \node[fill=white] (n2) at (24,0) {\small{2}};
 \node[fill=white] (n3) at (14,9)  {\small{3}};
 \draw[->, thick] (n2) to [loop above] (n2); 
 \draw[->, thick] (n2) edge  [bend left=10] (n1); 
 \draw[->, thick] (n2) edge  [bend left=-10] (n3); 
 \end{tikzpicture} & 
 $\begin{array}{l}
\Delta_1 \\
\Delta_2 \\
\Delta_3 
\end{array}$ 
&  
$C_3$ 
\begin{tikzpicture}
 [scale=.15,auto=left, node distance=1.5cm, every node/.style={circle,draw}]
 \node[fill=white] (n1) at (4,0) {\small{1}};
  \node[fill=white] (n2) at (24,0) {\small{2}};
 \node[fill=white] (n3) at (14,9)  {\small{3}};
 \draw[->, thick] (n3) to [loop above] (n3); 
 \draw[->, thick] (n3) edge  [bend left=-10] (n1); 
 \draw[->, thick] (n3) edge  [bend left=10] (n2); 
 \end{tikzpicture} & 
 $\begin{array}{l}
\Delta_1 \\
\Delta_2 \\
\Delta_3 
\end{array}$ \\
\hline 
\hline 
$D_1$ 
\begin{tikzpicture}
 [scale=.15,auto=left, node distance=1.5cm, every node/.style={circle,draw}]
 \node[fill=white] (n1) at (4,0) {\small{1}};
  \node[fill=white] (n2) at (24,0) {\small{2}};
 \node[fill=white] (n3) at (14,9)  {\small{3}};
 \draw[->, thick] (n1) to [loop above] (n1); 
 \draw[->, thick] (n1) edge  [bend left=-10] (n2); 
 \draw[->, thick] (n2) edge  [bend left=-10] (n3); 
 \end{tikzpicture} &
 $\begin{array}{l}
\Delta_3 
\end{array}$ &
$D_2$ 
\begin{tikzpicture}
 [scale=.15,auto=left, node distance=1.5cm, every node/.style={circle,draw}]
 \node[fill=white] (n1) at (4,0) {\small{1}};
  \node[fill=white] (n2) at (24,0) {\small{2}};
 \node[fill=white] (n3) at (14,9)  {\small{3}};
 \draw[->, thick] (n1) to [loop above] (n1); 
 \draw[->, thick] (n1) edge  [bend left=10] (n3); 
 \draw[->, thick] (n3) edge  [bend left=10] (n2); 
 \end{tikzpicture} &
 $\begin{array}{l}
\Delta_2 
\end{array}$ 
& 
$D_3$ 
\begin{tikzpicture}
 [scale=.15,auto=left, node distance=1.5cm, every node/.style={circle,draw}]
 \node[fill=white] (n1) at (4,0) {\small{1}};
  \node[fill=white] (n2) at (24,0) {\small{2}};
 \node[fill=white] (n3) at (14,9)  {\small{3}};
 \draw[->, thick] (n2) to [loop above] (n2); 
 \draw[->, thick] (n2) edge  [bend left=-10] (n3); 
 \draw[->, thick] (n3) edge  [bend left=-10] (n1); 
 \end{tikzpicture} &
 $\begin{array}{l}
\Delta_1 
\end{array}$ \\
\hline 
$D_4$ 
\begin{tikzpicture}
 [scale=.15,auto=left, node distance=1.5cm, every node/.style={circle,draw}]
 \node[fill=white] (n1) at (4,0) {\small{1}};
  \node[fill=white] (n2) at (24,0) {\small{2}};
 \node[fill=white] (n3) at (14,9)  {\small{3}};
 \draw[->, thick] (n2) to [loop above] (n2); 
 \draw[->, thick] (n2) edge  [bend left=10] (n1); 
 \draw[->, thick] (n1) edge  [bend left=10] (n3); 
 \end{tikzpicture} &
 $\begin{array}{l}
\Delta_3 
\end{array}$ 
&
$D_5$ 
\begin{tikzpicture}
 [scale=.15,auto=left, node distance=1.5cm, every node/.style={circle,draw}]
 \node[fill=white] (n1) at (4,0) {\small{1}};
  \node[fill=white] (n2) at (24,0) {\small{2}};
 \node[fill=white] (n3) at (14,9)  {\small{3}};
 \draw[->, thick] (n3) to [loop above] (n3); 
 \draw[->, thick] (n3) edge  [bend left=-10] (n1); 
 \draw[->, thick] (n1) edge  [bend left=-10] (n2); 
 \end{tikzpicture} &
 $\begin{array}{l}
\Delta_2 
\end{array}$ &
$D_6$ 
\begin{tikzpicture}
 [scale=.15,auto=left, node distance=1.5cm, every node/.style={circle,draw}]
 \node[fill=white] (n1) at (4,0) {\small{1}};
  \node[fill=white] (n2) at (24,0) {\small{2}};
 \node[fill=white] (n3) at (14,9)  {\small{3}};
 \draw[->, thick] (n3) to [loop above] (n3); 
 \draw[->, thick] (n3) edge  [bend left=10] (n2); 
 \draw[->, thick] (n2) edge  [bend left=10] (n1); 
 \end{tikzpicture} &
 $\begin{array}{l}
\Delta_1 
\end{array}$ \\
\hline 
\hline 
$E_1$  
\begin{tikzpicture}
 [scale=.15,auto=left, node distance=1.5cm, every node/.style={circle,draw}]
 \node[fill=white] (n1) at (4,0) {\small{1}};
  \node[fill=white] (n2) at (24,0) {\small{2}};
 \node[fill=white] (n3) at (14,9)  {\small{3}};
 \draw[->, thick] (n1) to [loop above] (n1); 
 \draw[->, thick] (n2) to [loop above] (n2); 
\draw[->, thick] (n1) edge [bend right=-10] (n3); 
\end{tikzpicture} &
$\begin{array}{l}
\Delta_2 \\
\Delta_3 
\end{array}$ &
$E_2$  
\begin{tikzpicture}
 [scale=.15,auto=left, node distance=1.5cm, every node/.style={circle,draw}]
 \node[fill=white] (n1) at (4,0) {\small{1}};
  \node[fill=white] (n2) at (24,0) {\small{2}};
 \node[fill=white] (n3) at (14,9)  {\small{3}};
 \draw[->, thick] (n3) to [loop above] (n3); 
 \draw[->, thick] (n2) to [loop above] (n2); 
\draw[->, thick] (n3) edge [bend right=10] (n1); 
\end{tikzpicture} &
$\begin{array}{l}
\Delta_1 \\
\Delta_2 
\end{array}$ 
&
$E_3$  
\begin{tikzpicture}
 [scale=.15,auto=left, node distance=1.5cm, every node/.style={circle,draw}]
 \node[fill=white] (n1) at (4,0) {\small{1}};
  \node[fill=white] (n2) at (24,0) {\small{2}};
 \node[fill=white] (n3) at (14,9)  {\small{3}};
 \draw[->, thick] (n2) to [loop above] (n2); 
 \draw[->, thick] (n1) to [loop above] (n1); 
\draw[->, thick] (n2) edge [bend right=10] (n3); 
\end{tikzpicture} &
$\begin{array}{l}
\Delta_1\\
\Delta_3 
\end{array}$ \\
\hline  
$E_4$  
\begin{tikzpicture}
 [scale=.15,auto=left, node distance=1.5cm, every node/.style={circle,draw}]
 \node[fill=white] (n1) at (4,0) {\small{1}};
  \node[fill=white] (n2) at (24,0) {\small{2}};
 \node[fill=white] (n3) at (14,9)  {\small{3}};
 \draw[->, thick] (n3) to [loop above] (n3); 
 \draw[->, thick] (n1) to [loop above] (n1); 
\draw[->, thick] (n3) edge [bend right=-10] (n2); 
\end{tikzpicture} &
$\begin{array}{l}
\Delta_1 \\
\Delta_2 
\end{array}$ 
&
$E_5$  
\begin{tikzpicture}
 [scale=.15,auto=left, node distance=1.5cm, every node/.style={circle,draw}]
 \node[fill=white] (n1) at (4,0) {\small{1}};
  \node[fill=white] (n2) at (24,0) {\small{2}};
 \node[fill=white] (n3) at (14,9)  {\small{3}};
 \draw[->, thick] (n2) to [loop above] (n2); 
 \draw[->, thick] (n3) to [loop above] (n3); 
\draw[->, thick] (n2) edge [bend right=-10] (n1); 
\end{tikzpicture} &
$\begin{array}{l}
\Delta_1 \\
\Delta_3 
\end{array}$ &
$E_6$  
\begin{tikzpicture}
 [scale=.15,auto=left, node distance=1.5cm, every node/.style={circle,draw}]
 \node[fill=white] (n1) at (4,0) {\small{1}};
  \node[fill=white] (n2) at (24,0) {\small{2}};
 \node[fill=white] (n3) at (14,9)  {\small{3}};
 \draw[->, thick] (n1) to [loop above] (n1); 
 \draw[->, thick] (n3) to [loop above] (n3); 
\draw[->, thick] (n1) edge [bend right=10] (n2); 
\end{tikzpicture} &
$\begin{array}{l}
\Delta_2 \\
\Delta_3 
\end{array}$ \\
\hline
\hline 
$F_1$ 
\begin{tikzpicture}
 [scale=.15,auto=left, node distance=1.5cm, every node/.style={circle,draw}]
 \node[fill=white] (n1) at (4,0) {\small{1}};
  \node[fill=white] (n2) at (24,0) {\small{2}};
 \node[fill=white] (n3) at (14,9)  {\small{3}};
 \draw[->, thick] (n1) edge  [bend left=-10] (n2); 
 \draw[->, thick] (n2) edge  [bend left=-10] (n1); 
\draw[->, thick] (n1) edge [bend right=-10] (n3); 
\end{tikzpicture} &
$\begin{array}{l}
\Delta_1 \\
\Delta_3 
\end{array}$ &
$F_2$ 
\begin{tikzpicture}
 [scale=.15,auto=left, node distance=1.5cm, every node/.style={circle,draw}]
 \node[fill=white] (n1) at (4,0) {\small{1}};
  \node[fill=white] (n2) at (24,0) {\small{2}};
 \node[fill=white] (n3) at (14,9)  {\small{3}};
 \draw[->, thick] (n2) edge  [bend left=-10] (n1); 
 \draw[->, thick] (n1) edge  [bend left=-10] (n2); 
\draw[->, thick] (n2) edge [bend right=10] (n3); 
\end{tikzpicture} &
$\begin{array}{l}
\Delta_2\\
\Delta_3 
\end{array}$
&
$F_3$ 
\begin{tikzpicture}
 [scale=.15,auto=left, node distance=1.5cm, every node/.style={circle,draw}]
 \node[fill=white] (n1) at (4,0) {\small{1}};
  \node[fill=white] (n2) at (24,0) {\small{2}};
 \node[fill=white] (n3) at (14,9)  {\small{3}};
 \draw[->, thick] (n2) edge  [bend left=-10] (n3); 
 \draw[->, thick] (n3) edge  [bend left=-10] (n2); 
\draw[->, thick] (n2) edge [bend right=-10] (n1); 
\end{tikzpicture} &
$\begin{array}{l}
\Delta_1\\
\Delta_2 
\end{array}$ \\
\hline 
$F_4$ 
\begin{tikzpicture}
 [scale=.15,auto=left, node distance=1.5cm, every node/.style={circle,draw}]
 \node[fill=white] (n1) at (4,0) {\small{1}};
  \node[fill=white] (n2) at (24,0) {\small{2}};
 \node[fill=white] (n3) at (14,9)  {\small{3}};
 \draw[->, thick] (n3) edge  [bend left=-10] (n2); 
 \draw[->, thick] (n2) edge  [bend left=-10] (n3); 
\draw[->, thick] (n3) edge [bend right=10] (n1); 
\end{tikzpicture} &
$\begin{array}{l}
\Delta_1 \\
\Delta_3 
\end{array}$ 
&
$F_5$ 
\begin{tikzpicture}
 [scale=.15,auto=left, node distance=1.5cm, every node/.style={circle,draw}]
 \node[fill=white] (n1) at (4,0) {\small{1}};
  \node[fill=white] (n2) at (24,0) {\small{2}};
 \node[fill=white] (n3) at (14,9)  {\small{3}};
 \draw[->, thick] (n3) edge  [bend left=-10] (n1); 
 \draw[->, thick] (n1) edge  [bend left=-10] (n3); 
\draw[->, thick] (n3) edge [bend right=-10] (n2); 
\end{tikzpicture} &
$\begin{array}{l}
\Delta_2 \\
\Delta_3 
\end{array}$ &
$F_6$ 
\begin{tikzpicture}
 [scale=.15,auto=left, node distance=1.5cm, every node/.style={circle,draw}]
 \node[fill=white] (n1) at (4,0) {\small{1}};
  \node[fill=white] (n2) at (24,0) {\small{2}};
 \node[fill=white] (n3) at (14,9)  {\small{3}};
 \draw[->, thick] (n1) edge  [bend left=-10] (n3); 
 \draw[->, thick] (n3) edge  [bend left=-10] (n1); 
\draw[->, thick] (n1) edge [bend right=10] (n2); 
\end{tikzpicture} &
$\begin{array}{l}
\Delta_1\\
\Delta_2 
\end{array}$\\
\hline 
\end{tabular}}}
\caption{Three-cell networks with one (asymmetric) input obtained from the networks in Table~\ref{tab:val13cell} by permutation of cells. } 
\label{tab:listaval13cell}     
\end{table}

We obtain now all the minimal three-cell connected networks with two asymmetric inputs, up to ODE-equivalence.

As stated before, every three-cell network with two asymmetric inputs is the union of two three-cell networks with one (asymmetric)  input. Since, in the union of two such networks, the order of the cells matters, we list in Table~\ref{tab:listaval13cell} 
all the three-cell networks with one (asymmetric) input and adjacency matrix $A \not= \mbox{Id}_3$, which are obtained from the networks in Table~\ref{tab:val13cell} by permutation of the three cells.

By Corollary~\ref{cor:notmin},  a  three-cell network with two asymmetric inputs is not minimal if and only if the two inputs are equal. In this case the network is ODE-equivalent to a three-cell network with one (asymmetric) input.

\begin{table}
\begin{center}
   \resizebox*{!}{0.95\textheight}{{\tiny 
   
 \begin{tabular}{|c||c||c|}
 \hline 
\begin{tikzpicture}
  [scale=.15,auto=left, node distance=1.5cm, every node/.style={circle,draw}]
 \node[fill=white] (n1) at (4,0) {\small{1}};
 \node[fill=white] (n2) at (24,0) {\small{2}}; \node[fill=white] (n3) at (14,9)  {\small{3}};
\draw[->, thick] (n1) to  [in=120,out=70,looseness=5]  (n1);
\draw[->>, thick] (n1) to  [in=205,out=155,looseness=5] (n1);
 \draw[->, thick] (n3) to  [in=110,out=160,looseness=5]  (n3);
\draw[->, thick] (n1) edge  [bend left=-10] (n2); 
 \draw[<<-, thick] (n2) edge  [bend left=10] (n3); 
\draw[->>, thick] (n2) edge  [bend left=10] (n3); 
\end{tikzpicture}
 &
 \begin{tikzpicture}
  [scale=.15,auto=left, node distance=1.5cm, every node/.style={circle,draw}]
 \node[fill=white] (n1) at (4,0) {\small{1}};
 \node[fill=white] (n2) at (24,0) {\small{2}}; \node[fill=white] (n3) at (14,9)  {\small{3}};
 \draw[->, thick] (n1) to  [in=120,out=70,looseness=5] (n1);
\draw[<<-, thick] (n1) edge  [bend left=10] (n2); 
\draw[->, thick] (n1) edge  [bend left=-10] (n2); 
 \draw[<<-, thick] (n2) edge  [bend left=10] (n3); 
 \draw[->, thick] (n2) edge  [bend left=-10] (n3); 
\draw[->>, thick] (n2) edge  [bend left=10] (n3); 
\end{tikzpicture}
&
\begin{tikzpicture}
  [scale=.15,auto=left, node distance=1.5cm, every node/.style={circle,draw}]
 \node[fill=white] (n1) at (4,0) {\small{1}};
 \node[fill=white] (n2) at (24,0) {\small{2}}; \node[fill=white] (n3) at (14,9)  {\small{3}};
 \draw[->, thick] (n1) to  [in=120,out=70,looseness=5] (n1);
\draw[<<-, thick] (n1) edge  [bend left=10] (n2); 
\draw[->>, thick] (n1) edge  [bend left=10] (n2); 
 \draw[<-, thick] (n2) edge  [bend left=-10] (n3); 
 \draw[->, thick] (n2) edge  [bend left=-10] (n3); 
\draw[->>, thick] (n2) edge  [bend left=10] (n3); 
\end{tikzpicture}
\\$E_6 \& B_1$ 
& $D_1 \& F_3$ 
&$B_1 \& F_2$ 
 \\ \hline 
 \begin{tikzpicture}
  [scale=.15,auto=left, node distance=1.5cm, every node/.style={circle,draw}]
 \node[fill=white] (n1) at (4,0) {\small{1}};
 \node[fill=white] (n2) at (24,0) {\small{2}}; \node[fill=white] (n3) at (14,9)  {\small{3}};
 \draw[->, thick] (n1) to  [in=120,out=70,looseness=5] (n1);
 \draw[->>, thick] (n3) to  [in=70,out=20,looseness=5] (n3);
\draw[<<-, thick] (n1) edge  [bend left=10] (n2); 
\draw[->, thick] (n1) edge  [bend left=-10] (n2); 
 \draw[<<-, thick] (n2) edge  [bend left=10] (n3); 
\draw[->, thick] (n2) edge  [bend left=-10] (n3); 
\end{tikzpicture}
&
 \begin{tikzpicture}
  [scale=.15,auto=left, node distance=1.5cm, every node/.style={circle,draw}]
 \node[fill=white] (n1) at (4,0) {\small{1}};
 \node[fill=white] (n2) at (24,0) {\small{2}}; \node[fill=white] (n3) at (14,9)  {\small{3}};
 \draw[->, thick] (n1) to  [in=120,out=70,looseness=5] (n1);
\draw[<<-, thick] (n1) edge  [bend left=10] (n2); 
\draw[->, thick] (n1) edge  [bend left=-10] (n2); 
 \draw[<<-, thick] (n2) edge  [bend left=10] (n3); 
\draw[<-, thick] (n3) edge [bend right=10] (n1); 
\draw[<<-, thick] (n3) edge [bend right=-10] (n1); 
\end{tikzpicture}
 &
 \begin{tikzpicture}
  [scale=.15,auto=left, node distance=1.5cm, every node/.style={circle,draw}]
 \node[fill=white] (n1) at (4,0) {\small{1}};
 \node[fill=white] (n2) at (24,0) {\small{2}}; \node[fill=white] (n3) at (14,9)  {\small{3}};
 \draw[->, thick] (n1) to  [in=120,out=70,looseness=5] (n1);
\draw[->>, thick] (n3) edge [bend right=-10] (n1); 
\draw[->, thick] (n1) edge  [bend left=-10] (n2); 
\draw[->>, thick] (n1) edge  [bend left=10] (n2); 
\draw[<<-, thick] (n3) edge [bend right=-10] (n1); 
\draw[->, thick] (n2) edge  [bend left=-10] (n3); 
\end{tikzpicture}
\\ $D_1 \& D_6$ 
& $C_1 \& A_2$ 
& $D_1 \& F_6$ 
\\ \hline 
\begin{tikzpicture}
  [scale=.15,auto=left, node distance=1.5cm, every node/.style={circle,draw}]
 \node[fill=white] (n1) at (4,0) {\small{1}};
 \node[fill=white] (n2) at (24,0) {\small{2}}; \node[fill=white] (n3) at (14,9)  {\small{3}};
\draw[<-, thick] (n1) edge  [bend left=-10] (n2); 
\draw[->>, thick] (n3) edge [bend right=-10] (n1); 
\draw[->>, thick] (n1) edge  [bend left=10] (n2); 
 \draw[<-, thick] (n2) edge  [bend left=-10] (n3); 
\draw[<-, thick] (n3) edge [bend right=10] (n1); 
\draw[->>, thick] (n2) edge  [bend left=10] (n3); 
\end{tikzpicture}
&
 \begin{tikzpicture}
  [scale=.15,auto=left, node distance=1.5cm, every node/.style={circle,draw}]
 \node[fill=white] (n1) at (4,0) {\small{1}};
 \node[fill=white] (n2) at (24,0) {\small{2}}; \node[fill=white] (n3) at (14,9)  {\small{3}};
 \draw[->, thick] (n1) to  [in=120,out=70,looseness=5] (n1);
\draw[->>, thick] (n3) edge [bend right=-10] (n1); 
\draw[->, thick] (n1) edge  [bend left=-10] (n2); 
\draw[->>, thick] (n1) edge  [bend left=10] (n2); 
 \draw[->, thick] (n2) edge  [bend left=-10] (n3); 
\draw[->>, thick] (n2) edge  [bend left=10] (n3); 
\end{tikzpicture}
&
 \begin{tikzpicture}
  [scale=.15,auto=left, node distance=1.5cm, every node/.style={circle,draw}]
 \node[fill=white] (n1) at (4,0) {\small{1}};
 \node[fill=white] (n2) at (24,0) {\small{2}}; \node[fill=white] (n3) at (14,9)  {\small{3}};
\draw[->, thick] (n1) to  [in=120,out=70,looseness=5]  (n1);
\draw[->>, thick] (n1) to  [in=205,out=155,looseness=5] (n1);
\draw[->, thick] (n1) edge  [bend left=-10] (n2); 
 \draw[<<-, thick] (n2) edge  [bend left=10] (n3); 
\draw[<<-, thick] (n3) edge [bend right=-10] (n1); 
\draw[->, thick] (n2) edge  [bend left=-10] (n3); 
\end{tikzpicture}
\\$A_2 \& A_1$ 
& $D_1 \& A_1$ 
& $D_1 \& D_2$ 
\\ \hline 
 \begin{tikzpicture}
  [scale=.15,auto=left, node distance=1.5cm, every node/.style={circle,draw}]
 \node[fill=white] (n1) at (4,0) {\small{1}};
 \node[fill=white] (n2) at (24,0) {\small{2}}; \node[fill=white] (n3) at (14,9)  {\small{3}};
 \draw[->, thick] (n1) to  [in=120,out=70,looseness=5] (n1);
 \draw[->>, thick] (n3) to  [in=70,out=20,looseness=5] (n3);
\draw[->>, thick] (n3) edge [bend right=-10] (n1); 
\draw[->, thick] (n1) edge  [bend left=-10] (n2); 
\draw[->>, thick] (n1) edge  [bend left=10] (n2); 
\draw[->, thick] (n2) edge  [bend left=-10] (n3); 
\end{tikzpicture}
&
\begin{tikzpicture}
  [scale=.15,auto=left, node distance=1.5cm, every node/.style={circle,draw}]
 \node[fill=white] (n1) at (4,0) {\small{1}};
 \node[fill=white] (n2) at (24,0) {\small{2}}; \node[fill=white] (n3) at (14,9)  {\small{3}};
\draw[->, thick] (n1) to  [in=120,out=70,looseness=5]  (n1);
\draw[->>, thick] (n1) to  [in=205,out=155,looseness=5] (n1);
\draw[->, thick] (n1) edge  [bend left=-10] (n2); 
 \draw[<<-, thick] (n2) edge  [bend left=10] (n3); 
 \draw[->, thick] (n2) edge  [bend left=-10] (n3); 
\draw[->>, thick] (n2) edge  [bend left=10] (n3); 
\end{tikzpicture}
&
 \begin{tikzpicture}
  [scale=.15,auto=left, node distance=1.5cm, every node/.style={circle,draw}]
 \node[fill=white] (n1) at (4,0) {\small{1}};
 \node[fill=white] (n2) at (24,0) {\small{2}}; \node[fill=white] (n3) at (14,9)  {\small{3}};
 \draw[->, thick] (n1) to  [in=120,out=70,looseness=5] (n1);
 \draw[->>, thick] (n2) to  [in=70,out=120,looseness=5] (n2);
\draw[->>, thick] (n3) edge [bend right=-10] (n1); 
\draw[->, thick] (n1) edge  [bend left=-10] (n2); 
\draw[<<-, thick] (n3) edge [bend right=-10] (n1); 
\draw[->, thick] (n2) edge  [bend left=-10] (n3); 
\end{tikzpicture}
\\ $D_1 \& D_5$ 
&$D_1 \& B_1$ 
& $D_1 \& B_2$ 
\\ \hline 
\begin{tikzpicture}
  [scale=.15,auto=left, node distance=1.5cm, every node/.style={circle,draw}]
 \node[fill=white] (n1) at (4,0) {\small{1}};
 \node[fill=white] (n2) at (24,0) {\small{2}}; \node[fill=white] (n3) at (14,9)  {\small{3}};
\draw[->, thick] (n1) to  [in=120,out=70,looseness=5]  (n1);
\draw[->>, thick] (n1) to  [in=205,out=155,looseness=5] (n1);
 \draw[->>, thick] (n3) to  [in=70,out=20,looseness=5] (n3);
\draw[->, thick] (n1) edge  [bend left=-10] (n2); 
 \draw[<<-, thick] (n2) edge  [bend left=10] (n3); 
\draw[->, thick] (n2) edge  [bend left=-10] (n3); 
\end{tikzpicture}
& 
 \begin{tikzpicture}
  [scale=.15,auto=left, node distance=1.5cm, every node/.style={circle,draw}]
 \node[fill=white] (n1) at (4,0) {\small{1}};
 \node[fill=white] (n2) at (24,0) {\small{2}}; \node[fill=white] (n3) at (14,9)  {\small{3}};
 \draw[->, thick] (n1) to  [in=120,out=70,looseness=5] (n1);
 \draw[->, thick] (n3) to  [in=110,out=160,looseness=5]  (n3);
\draw[<<-, thick] (n1) edge  [bend left=10] (n2); 
\draw[->, thick] (n1) edge  [bend left=-10] (n2); 
 \draw[<<-, thick] (n2) edge  [bend left=10] (n3); 
\draw[<<-, thick] (n3) edge [bend right=-10] (n1); 
\end{tikzpicture}
&
\begin{tikzpicture}
  [scale=.15,auto=left, node distance=1.5cm, every node/.style={circle,draw}]
 \node[fill=white] (n1) at (4,0) {\small{1}};
 \node[fill=white] (n2) at (24,0) {\small{2}}; \node[fill=white] (n3) at (14,9)  {\small{3}};
 \draw[->, thick] (n1) to  [in=120,out=70,looseness=5] (n1);
\draw[<<-, thick] (n1) edge  [bend left=10] (n2); 
\draw[->, thick] (n1) edge  [bend left=-10] (n2); 
 \draw[<<-, thick] (n2) edge  [bend left=10] (n3); 
\draw[<<-, thick] (n3) edge [bend right=-10] (n1); 
\draw[->, thick] (n2) edge  [bend left=-10] (n3); 
\end{tikzpicture}
\\$D_1 \& E_4$ 
& $E_6 \& A_2$ 
&$D_1 \& A_2$ 
\\ \hline 
 \begin{tikzpicture}
  [scale=.15,auto=left, node distance=1.5cm, every node/.style={circle,draw}]
 \node[fill=white] (n1) at (4,0) {\small{1}};
 \node[fill=white] (n2) at (24,0) {\small{2}}; \node[fill=white] (n3) at (14,9)  {\small{3}};
 \draw[->, thick] (n1) to  [in=120,out=70,looseness=5] (n1);
 \draw[->>, thick] (n3) to  [in=70,out=20,looseness=5] (n3);
\draw[<<-, thick] (n1) edge  [bend left=10] (n2); 
\draw[->>, thick] (n1) edge  [bend left=10] (n2); 
 \draw[<-, thick] (n2) edge  [bend left=-10] (n3); 
\draw[->, thick] (n2) edge  [bend left=-10] (n3); 
\end{tikzpicture}
&
\begin{tikzpicture}
  [scale=.15,auto=left, node distance=1.5cm, every node/.style={circle,draw}]
 \node[fill=white] (n1) at (4,0) {\small{1}};
 \node[fill=white] (n2) at (24,0) {\small{2}}; \node[fill=white] (n3) at (14,9)  {\small{3}};
 \draw[->, thick] (n1) to  [in=120,out=70,looseness=5] (n1);
\draw[<<-, thick] (n1) edge  [bend left=10] (n2); 
\draw[<-, thick] (n2) edge  [bend left=-10] (n3); 
\draw[<<-, thick] (n2) edge  [bend left=10] (n3); 
\draw[<<-, thick] (n3) edge [bend right=-10] (n1); 
\draw[->, thick] (n2) edge  [bend left=-10] (n3); 
\end{tikzpicture}
 & 
\begin{tikzpicture}
  [scale=.15,auto=left, node distance=1.5cm, every node/.style={circle,draw}]
 \node[fill=white] (n1) at (4,0) {\small{1}};
 \node[fill=white] (n2) at (24,0) {\small{2}}; \node[fill=white] (n3) at (14,9)  {\small{3}};
\draw[<-, thick] (n1) edge  [bend left=-10] (n2); 
 \draw[<<-, thick] (n1) edge  [bend left=10] (n2); 
\draw[->, thick] (n1) edge  [bend left=-10] (n2); 
 \draw[<<-, thick] (n2) edge  [bend left=10] (n3); 
\draw[<-, thick] (n3) edge [bend right=10] (n1); 
\draw[<<-, thick] (n3) edge [bend right=-10] (n1); 
\end{tikzpicture}
\\ $B_1 \& B_3$ 
&$B_1 \& A_2$ 
&$F_1 \& A_2$ 
\\ \hline 
\begin{tikzpicture}
  [scale=.15,auto=left, node distance=1.5cm, every node/.style={circle,draw}]
 \node[fill=white] (n1) at (4,0) {\small{1}};
 \node[fill=white] (n2) at (24,0) {\small{2}}; \node[fill=white] (n3) at (14,9)  {\small{3}};
\draw[<-, thick] (n1) edge  [bend left=-10] (n2); 
\draw[->>, thick] (n3) edge [bend right=-10] (n1); 
\draw[->, thick] (n1) edge  [bend left=-10] (n2); 
\draw[->>, thick] (n1) edge  [bend left=10] (n2); 
\draw[<-, thick] (n3) edge [bend right=10] (n1); 
\draw[->>, thick] (n2) edge  [bend left=10] (n3); 
\end{tikzpicture}
& & \\
$F_1 \& A_1$ 
& & \\ \hline 
\end{tabular}}}
\end{center}
 \caption{3-cell networks with two asymmetric inputs and no 2D synchrony.} 
 \label{tab:C3L0RI2.tex}
 \end{table}

\begin{table}
\begin{center}
   \resizebox*{!}{0.95\textheight}{{\tiny 
 \begin{tabular}{|c||c||c|}
 \hline 
\begin{tikzpicture}
  [scale=.15,auto=left, node distance=1.5cm, every node/.style={circle,draw}]
 \node[fill=white] (n1) at (4,0) {\small{1}};
 \node[fill=white] (n2) at (24,0) {\small{2}}; \node[fill=white] (n3) at (14,9)  {\small{3}};
\draw[->, thick] (n1) to  [in=120,out=70,looseness=5]  (n1);
\draw[->>, thick] (n1) to  [in=205,out=155,looseness=5] (n1);
 \draw[->, thick] (n3) to  [in=110,out=160,looseness=5]  (n3);
 \draw[->>, thick] (n3) to  [in=70,out=20,looseness=5] (n3);
\draw[->, thick] (n1) edge  [bend left=-10] (n2); 
 \draw[<<-, thick] (n2) edge  [bend left=10] (n3); 
\end{tikzpicture}
 &
 \begin{tikzpicture}
  [scale=.15,auto=left, node distance=1.5cm, every node/.style={circle,draw}]
 \node[fill=white] (n1) at (4,0) {\small{1}};
 \node[fill=white] (n2) at (24,0) {\small{2}}; \node[fill=white] (n3) at (14,9)  {\small{3}};
\draw[->, thick] (n1) to  [in=120,out=70,looseness=5]  (n1);
\draw[->>, thick] (n1) to  [in=205,out=155,looseness=5] (n1);
 \draw[->>, thick] (n2) to  [in=70,out=120,looseness=5] (n2);
\draw[->, thick] (n1) edge  [bend left=-10] (n2); 
\draw[<<-, thick] (n3) edge [bend right=-10] (n1); 
\draw[->, thick] (n2) edge  [bend left=-10] (n3); 
\end{tikzpicture}
 &
\begin{tikzpicture}
  [scale=.15,auto=left, node distance=1.5cm, every node/.style={circle,draw}]
 \node[fill=white] (n1) at (4,0) {\small{1}};
 \node[fill=white] (n2) at (24,0) {\small{2}}; \node[fill=white] (n3) at (14,9)  {\small{3}};
\draw[->, thick] (n1) to  [in=120,out=70,looseness=5]  (n1);
\draw[->>, thick] (n1) to  [in=205,out=155,looseness=5] (n1);
\draw[->, thick] (n1) edge  [bend left=-10] (n2); 
\draw[->>, thick] (n1) edge  [bend left=10] (n2); 
\draw[<-, thick] (n3) edge [bend right=10] (n1); 
\draw[->>, thick] (n2) edge  [bend left=10] (n3); 
\end{tikzpicture}
 \\$E_6 \& E_4$  & $D_1 \& E_1$ &$C_1 \& D_1$ 
\\ \hline 
\begin{tikzpicture}
  [scale=.15,auto=left, node distance=1.5cm, every node/.style={circle,draw}]
 \node[fill=white] (n1) at (4,0) {\small{1}};
 \node[fill=white] (n2) at (24,0) {\small{2}}; \node[fill=white] (n3) at (14,9)  {\small{3}};
 \draw[->, thick] (n1) to  [in=120,out=70,looseness=5] (n1);
\draw[<<-, thick] (n1) edge  [bend left=10] (n2); 
\draw[->, thick] (n1) edge  [bend left=-10] (n2); 
\draw[->>, thick] (n1) edge  [bend left=10] (n2); 
\draw[<<-, thick] (n3) edge [bend right=-10] (n1); 
\draw[->, thick] (n2) edge  [bend left=-10] (n3); 
\end{tikzpicture}
 &
\begin{tikzpicture}
  [scale=.15,auto=left, node distance=1.5cm, every node/.style={circle,draw}]
 \node[fill=white] (n1) at (4,0) {\small{1}};
 \node[fill=white] (n2) at (24,0) {\small{2}}; \node[fill=white] (n3) at (14,9)  {\small{3}};
 \draw[->, thick] (n1) to  [in=120,out=70,looseness=5] (n1);
 \draw[->>, thick] (n2) to  [in=70,out=120,looseness=5] (n2);
\draw[<<-, thick] (n1) edge  [bend left=10] (n2); 
\draw[->, thick] (n1) edge  [bend left=-10] (n2); 
\draw[<-, thick] (n3) edge [bend right=10] (n1); 
\draw[<<-, thick] (n3) edge [bend right=-10] (n1); 
\end{tikzpicture}
 &
 \begin{tikzpicture}
  [scale=.15,auto=left, node distance=1.5cm, every node/.style={circle,draw}]
 \node[fill=white] (n1) at (4,0) {\small{1}};
 \node[fill=white] (n2) at (24,0) {\small{2}}; \node[fill=white] (n3) at (14,9)  {\small{3}};
 \draw[->, thick] (n1) to  [in=120,out=70,looseness=5] (n1);
\draw[<<-, thick] (n1) edge  [bend left=10] (n2); 
\draw[->, thick] (n1) edge  [bend left=-10] (n2); 
\draw[->>, thick] (n1) edge  [bend left=10] (n2); 
 \draw[->, thick] (n2) edge  [bend left=-10] (n3); 
\draw[->>, thick] (n2) edge  [bend left=10] (n3); 
\end{tikzpicture}
\\ $D_1 \& F_1$ 
&$C_1 \& D_4$ 
& $D_1 \& F_2$ 
\\\hline 
\begin{tikzpicture}
  [scale=.15,auto=left, node distance=1.5cm, every node/.style={circle,draw}]
 \node[fill=white] (n1) at (4,0) {\small{1}};
 \node[fill=white] (n2) at (24,0) {\small{2}}; \node[fill=white] (n3) at (14,9)  {\small{3}};
 \draw[->, thick] (n1) to  [in=120,out=70,looseness=5] (n1);
 \draw[->>, thick] (n3) to  [in=70,out=20,looseness=5] (n3);
\draw[<<-, thick] (n1) edge  [bend left=10] (n2); 
\draw[->, thick] (n1) edge  [bend left=-10] (n2); 
 \draw[<<-, thick] (n2) edge  [bend left=10] (n3); 
\draw[<-, thick] (n3) edge [bend right=10] (n1); 
\end{tikzpicture}
 &
 \begin{tikzpicture}
  [scale=.15,auto=left, node distance=1.5cm, every node/.style={circle,draw}]
 \node[fill=white] (n1) at (4,0) {\small{1}};
 \node[fill=white] (n2) at (24,0) {\small{2}}; \node[fill=white] (n3) at (14,9)  {\small{3}};
 \draw[->, thick] (n1) to  [in=120,out=70,looseness=5] (n1);
 \draw[->>, thick] (n3) to  [in=70,out=20,looseness=5] (n3);
\draw[<<-, thick] (n1) edge  [bend left=10] (n2); 
\draw[->, thick] (n1) edge  [bend left=-10] (n2); 
\draw[->>, thick] (n1) edge  [bend left=10] (n2); 
\draw[->, thick] (n2) edge  [bend left=-10] (n3); 
\end{tikzpicture}
 &
\begin{tikzpicture}
  [scale=.15,auto=left, node distance=1.5cm, every node/.style={circle,draw}]
 \node[fill=white] (n1) at (4,0) {\small{1}};
 \node[fill=white] (n2) at (24,0) {\small{2}}; \node[fill=white] (n3) at (14,9)  {\small{3}};
 \draw[->, thick] (n1) to  [in=120,out=70,looseness=5] (n1);
 \draw[->>, thick] (n2) to  [in=70,out=120,looseness=5] (n2);
\draw[<<-, thick] (n1) edge  [bend left=10] (n2); 
\draw[->, thick] (n1) edge  [bend left=-10] (n2); 
\draw[<<-, thick] (n3) edge [bend right=-10] (n1); 
\draw[->, thick] (n2) edge  [bend left=-10] (n3); 
\end{tikzpicture}
\\$C_1 \& D_6$ 
& $D_1 \& B_3$ 
&$D_1 \& D_4$ 
 \\ \hline 
 \begin{tikzpicture}
  [scale=.15,auto=left, node distance=1.5cm, every node/.style={circle,draw}]
 \node[fill=white] (n1) at (4,0) {\small{1}};
 \node[fill=white] (n2) at (24,0) {\small{2}}; \node[fill=white] (n3) at (14,9)  {\small{3}};
 \draw[->, thick] (n1) to  [in=120,out=70,looseness=5] (n1);
\draw[->>, thick] (n3) edge [bend right=-10] (n1); 
\draw[->, thick] (n1) edge  [bend left=-10] (n2); 
 \draw[<<-, thick] (n2) edge  [bend left=10] (n3); 
\draw[<<-, thick] (n3) edge [bend right=-10] (n1); 
\draw[->, thick] (n2) edge  [bend left=-10] (n3); 
\end{tikzpicture}
 &  
\begin{tikzpicture}
  [scale=.15,auto=left, node distance=1.5cm, every node/.style={circle,draw}]
 \node[fill=white] (n1) at (4,0) {\small{1}};
 \node[fill=white] (n2) at (24,0) {\small{2}}; \node[fill=white] (n3) at (14,9)  {\small{3}};
\draw[->, thick] (n1) to  [in=120,out=70,looseness=5]  (n1);
\draw[->>, thick] (n1) to  [in=205,out=155,looseness=5] (n1);
\draw[->, thick] (n1) edge  [bend left=-10] (n2); 
 \draw[<<-, thick] (n2) edge  [bend left=10] (n3); 
\draw[<-, thick] (n3) edge [bend right=10] (n1); 
\draw[->>, thick] (n2) edge  [bend left=10] (n3); 
\end{tikzpicture}
 &  
 \begin{tikzpicture}
  [scale=.15,auto=left, node distance=1.5cm, every node/.style={circle,draw}]
 \node[fill=white] (n1) at (4,0) {\small{1}};
 \node[fill=white] (n2) at (24,0) {\small{2}}; \node[fill=white] (n3) at (14,9)  {\small{3}};
 \draw[->, thick] (n1) to  [in=120,out=70,looseness=5] (n1);
\draw[->>, thick] (n3) edge [bend right=-10] (n1); 
\draw[->, thick] (n1) edge  [bend left=-10] (n2); 
 \draw[<<-, thick] (n2) edge  [bend left=10] (n3); 
 \draw[->, thick] (n2) edge  [bend left=-10] (n3); 
\draw[->>, thick] (n2) edge  [bend left=10] (n3); 
\end{tikzpicture}
\\ $D_1 \& F_5$ 
&$C_1 \& B_1$ 
& $D_1 \& F_4$ 
\\ \hline 
\begin{tikzpicture}
  [scale=.15,auto=left, node distance=1.5cm, every node/.style={circle,draw}]
 \node[fill=white] (n1) at (4,0) {\small{1}};
 \node[fill=white] (n2) at (24,0) {\small{2}}; \node[fill=white] (n3) at (14,9)  {\small{3}};
 \draw[->, thick] (n1) to  [in=120,out=70,looseness=5] (n1);
 \draw[->>, thick] (n3) to  [in=70,out=20,looseness=5] (n3);
\draw[<<-, thick] (n1) edge  [bend left=10] (n2); 
\draw[->, thick] (n1) edge  [bend left=-10] (n2); 
\draw[->>, thick] (n1) edge  [bend left=10] (n2); 
\draw[<-, thick] (n3) edge [bend right=10] (n1); 
\end{tikzpicture}
 &
 \begin{tikzpicture}
  [scale=.15,auto=left, node distance=1.5cm, every node/.style={circle,draw}]
 \node[fill=white] (n1) at (4,0) {\small{1}};
 \node[fill=white] (n2) at (24,0) {\small{2}}; \node[fill=white] (n3) at (14,9)  {\small{3}};
 \draw[->, thick] (n1) to  [in=120,out=70,looseness=5] (n1);
 \draw[->, thick] (n3) to  [in=110,out=160,looseness=5]  (n3);
\draw[<<-, thick] (n1) edge  [bend left=10] (n2); 
\draw[->, thick] (n1) edge  [bend left=-10] (n2); 
 \draw[<<-, thick] (n2) edge  [bend left=10] (n3); 
\draw[->>, thick] (n2) edge  [bend left=10] (n3); 
\end{tikzpicture}
 &  
\begin{tikzpicture}
  [scale=.15,auto=left, node distance=1.5cm, every node/.style={circle,draw}]
 \node[fill=white] (n1) at (4,0) {\small{1}};
 \node[fill=white] (n2) at (24,0) {\small{2}}; \node[fill=white] (n3) at (14,9)  {\small{3}};
\draw[->, thick] (n1) to  [in=120,out=70,looseness=5]  (n1);
\draw[->>, thick] (n1) to  [in=205,out=155,looseness=5] (n1);
 \draw[->>, thick] (n3) to  [in=70,out=20,looseness=5] (n3);
\draw[->, thick] (n1) edge  [bend left=-10] (n2); 
\draw[->>, thick] (n1) edge  [bend left=10] (n2); 
\draw[->, thick] (n2) edge  [bend left=-10] (n3); 
\end{tikzpicture}
\\$C_1 \& B_3$ 
& $E_6 \& F_3$ 
&$D_1 \& E_6$ 
 \\ \hline 
 \begin{tikzpicture}
  [scale=.15,auto=left, node distance=1.5cm, every node/.style={circle,draw}]
 \node[fill=white] (n1) at (4,0) {\small{1}};
 \node[fill=white] (n2) at (24,0) {\small{2}}; \node[fill=white] (n3) at (14,9)  {\small{3}};
 \draw[->, thick] (n1) to  [in=120,out=70,looseness=5] (n1);
 \draw[->, thick] (n3) to  [in=110,out=160,looseness=5]  (n3);
\draw[->>, thick] (n3) edge [bend right=-10] (n1); 
\draw[->, thick] (n1) edge  [bend left=-10] (n2); 
\draw[->>, thick] (n1) edge  [bend left=10] (n2); 
\draw[<<-, thick] (n3) edge [bend right=-10] (n1); 
\end{tikzpicture}
 &  
 \begin{tikzpicture}
  [scale=.15,auto=left, node distance=1.5cm, every node/.style={circle,draw}]
 \node[fill=white] (n1) at (4,0) {\small{1}};
 \node[fill=white] (n2) at (24,0) {\small{2}}; \node[fill=white] (n3) at (14,9)  {\small{3}};
 \draw[->, thick] (n1) to  [in=120,out=70,looseness=5] (n1);
 \draw[->, thick] (n3) to  [in=110,out=160,looseness=5]  (n3);
\draw[->>, thick] (n3) edge [bend right=-10] (n1); 
\draw[->, thick] (n1) edge  [bend left=-10] (n2); 
 \draw[<<-, thick] (n2) edge  [bend left=10] (n3); 
\draw[->>, thick] (n2) edge  [bend left=10] (n3); 
\end{tikzpicture}
 &  
\begin{tikzpicture}
  [scale=.15,auto=left, node distance=1.5cm, every node/.style={circle,draw}]
 \node[fill=white] (n1) at (4,0) {\small{1}};
 \node[fill=white] (n2) at (24,0) {\small{2}}; \node[fill=white] (n3) at (14,9)  {\small{3}};
 \draw[->, thick] (n1) to  [in=120,out=70,looseness=5] (n1);
\draw[<<-, thick] (n1) edge  [bend left=10] (n2); 
\draw[->>, thick] (n1) edge  [bend left=10] (n2); 
 \draw[<-, thick] (n2) edge  [bend left=-10] (n3); 
\draw[<<-, thick] (n3) edge [bend right=-10] (n1); 
\draw[->, thick] (n2) edge  [bend left=-10] (n3); 
\end{tikzpicture}
\\ $E_6 \& F_6$ 
& $E_6 \& F_4$ 
&$B_1 \& F_1$ 
\\ \hline 
\begin{tikzpicture}
  [scale=.15,auto=left, node distance=1.5cm, every node/.style={circle,draw}]
 \node[fill=white] (n1) at (4,0) {\small{1}};
 \node[fill=white] (n2) at (24,0) {\small{2}}; \node[fill=white] (n3) at (14,9)  {\small{3}};
\draw[<-, thick] (n1) edge  [bend left=-10] (n2); 
 \draw[<<-, thick] (n1) edge  [bend left=10] (n2); 
\draw[->, thick] (n1) edge  [bend left=-10] (n2); 
\draw[->>, thick] (n1) edge  [bend left=10] (n2); 
\draw[<-, thick] (n3) edge [bend right=10] (n1); 
\draw[->>, thick] (n2) edge  [bend left=10] (n3); 
\end{tikzpicture}
& 
\begin{tikzpicture}
  [scale=.15,auto=left, node distance=1.5cm, every node/.style={circle,draw}]
 \node[fill=white] (n1) at (4,0) {\small{1}};
 \node[fill=white] (n2) at (24,0) {\small{2}}; \node[fill=white] (n3) at (14,9)  {\small{3}};
\draw[<-, thick] (n1) edge  [bend left=-10] (n2); 
 \draw[<<-, thick] (n1) edge  [bend left=10] (n2); 
\draw[->, thick] (n1) edge  [bend left=-10] (n2); 
 \draw[<<-, thick] (n2) edge  [bend left=10] (n3); 
\draw[<-, thick] (n3) edge [bend right=10] (n1); 
\draw[->>, thick] (n2) edge  [bend left=10] (n3); 
\end{tikzpicture}
 &  
\begin{tikzpicture}
  [scale=.15,auto=left, node distance=1.5cm, every node/.style={circle,draw}]
 \node[fill=white] (n1) at (4,0) {\small{1}};
 \node[fill=white] (n2) at (24,0) {\small{2}}; \node[fill=white] (n3) at (14,9)  {\small{3}};
\draw[<-, thick] (n1) edge  [bend left=-10] (n2); 
\draw[->>, thick] (n3) edge [bend right=-10] (n1); 
\draw[->, thick] (n1) edge  [bend left=-10] (n2); 
\draw[->>, thick] (n1) edge  [bend left=10] (n2); 
\draw[<-, thick] (n3) edge [bend right=10] (n1); 
\draw[<<-, thick] (n3) edge [bend right=-10] (n1); 
\end{tikzpicture}
\\$F_1 \& F_2$ 
&$F_1 \& F_3$ 
&$F_1 \& F_6$ 
\\ \hline 
\end{tabular}
 }}
 \end{center}
 \caption{3-cell networks with two asymmetric inputs and one 2D synchrony.} 
 \label{tab:C3L1.tex}
 \end{table}

\begin{table}
 \begin{center}
 {\tiny 
 \begin{tabular}{|c||c||c|}
 \hline 
\begin{tikzpicture}
  [scale=.15,auto=left, node distance=1.5cm, every node/.style={circle,draw}]
 \node[fill=white] (n1) at (4,0) {\small{1}};
 \node[fill=white] (n2) at (24,0) {\small{2}}; \node[fill=white] (n3) at (14,9)  {\small{3}};
\draw[->, thick] (n1) to  [in=120,out=70,looseness=5]  (n1);
\draw[->>, thick] (n1) to  [in=205,out=155,looseness=5] (n1);
 \draw[->>, thick] (n3) to  [in=70,out=20,looseness=5] (n3);
\draw[->, thick] (n1) edge  [bend left=-10] (n2); 
\draw[->>, thick] (n1) edge  [bend left=10] (n2); 
\draw[<-, thick] (n3) edge [bend right=10] (n1); 
\end{tikzpicture}
 &  
 
\begin{tikzpicture}
  [scale=.15,auto=left, node distance=1.5cm, every node/.style={circle,draw}]
 \node[fill=white] (n1) at (4,0) {\small{1}};
 \node[fill=white] (n2) at (24,0) {\small{2}}; \node[fill=white] (n3) at (14,9)  {\small{3}};
\draw[->, thick] (n1) to  [in=120,out=70,looseness=5]  (n1);
\draw[->>, thick] (n1) to  [in=205,out=155,looseness=5] (n1);
 \draw[->>, thick] (n2) to  [in=70,out=120,looseness=5] (n2);
\draw[->, thick] (n1) edge  [bend left=-10] (n2); 
\draw[<-, thick] (n3) edge [bend right=10] (n1); 
\draw[->>, thick] (n2) edge  [bend left=10] (n3); 
\end{tikzpicture}
 &

\begin{tikzpicture}
  [scale=.15,auto=left, node distance=1.5cm, every node/.style={circle,draw}]
 \node[fill=white] (n1) at (4,0) {\small{1}};
 \node[fill=white] (n2) at (24,0) {\small{2}}; \node[fill=white] (n3) at (14,9)  {\small{3}};
 \draw[->, thick] (n1) to  [in=120,out=70,looseness=5] (n1);
\draw[<<-, thick] (n1) edge  [bend left=10] (n2); 
\draw[->, thick] (n1) edge  [bend left=-10] (n2); 
\draw[->>, thick] (n1) edge  [bend left=10] (n2); 
\draw[<-, thick] (n3) edge [bend right=10] (n1); 
\draw[<<-, thick] (n3) edge [bend right=-10] (n1); 
\end{tikzpicture}
 \\ $C_1 \& E_6$ & $C_1 \& E_3$&$C_1 \& F_1$ \\
\hline 
 
\begin{tikzpicture}
  [scale=.15,auto=left, node distance=1.5cm, every node/.style={circle,draw}]
 \node[fill=white] (n1) at (4,0) {\small{1}};
 \node[fill=white] (n2) at (24,0) {\small{2}}; \node[fill=white] (n3) at (14,9)  {\small{3}};
 \draw[->, thick] (n1) to  [in=120,out=70,looseness=5] (n1);
\draw[<<-, thick] (n1) edge  [bend left=10] (n2); 
\draw[->, thick] (n1) edge  [bend left=-10] (n2); 
\draw[->>, thick] (n1) edge  [bend left=10] (n2); 
\draw[<-, thick] (n3) edge [bend right=10] (n1); 
\draw[->>, thick] (n2) edge  [bend left=10] (n3); 
\end{tikzpicture}
 & 
\begin{tikzpicture}
  [scale=.15,auto=left, node distance=1.5cm, every node/.style={circle,draw}]
 \node[fill=white] (n1) at (4,0) {\small{1}};
 \node[fill=white] (n2) at (24,0) {\small{2}}; \node[fill=white] (n3) at (14,9)  {\small{3}};
 \draw[->, thick] (n1) to  [in=120,out=70,looseness=5] (n1);
\draw[<<-, thick] (n1) edge  [bend left=10] (n2); 
\draw[->, thick] (n1) edge  [bend left=-10] (n2); 
 \draw[<<-, thick] (n2) edge  [bend left=10] (n3); 
\draw[<-, thick] (n3) edge [bend right=10] (n1); 
\draw[->>, thick] (n2) edge  [bend left=10] (n3); 
\end{tikzpicture}
&
\begin{tikzpicture}
  [scale=.15,auto=left, node distance=1.5cm, every node/.style={circle,draw}]
 \node[fill=white] (n1) at (4,0) {\small{1}};
 \node[fill=white] (n2) at (24,0) {\small{2}}; \node[fill=white] (n3) at (14,9)  {\small{3}};
 \draw[->, thick] (n1) to  [in=120,out=70,looseness=5] (n1);
 \draw[->, thick] (n3) to  [in=110,out=160,looseness=5]  (n3);
\draw[->>, thick] (n3) edge [bend right=-10] (n1); 
\draw[->, thick] (n1) edge  [bend left=-10] (n2); 
 \draw[<<-, thick] (n2) edge  [bend left=10] (n3); 
\draw[<<-, thick] (n3) edge [bend right=-10] (n1); 
\end{tikzpicture}
\\ $C_1 \& F_2$&$C_1 \& F_3$ &$E_6 \& F_5$ 
\\\hline 

\begin{tikzpicture}
  [scale=.15,auto=left, node distance=1.5cm, every node/.style={circle,draw}]
 \node[fill=white] (n1) at (4,0) {\small{1}};
 \node[fill=white] (n2) at (24,0) {\small{2}}; \node[fill=white] (n3) at (14,9)  {\small{3}};
\draw[<-, thick] (n1) edge  [bend left=-10] (n2); 
\draw[->>, thick] (n3) edge [bend right=-10] (n1); 
\draw[->, thick] (n1) edge  [bend left=-10] (n2); 
 \draw[<<-, thick] (n2) edge  [bend left=10] (n3); 
\draw[<-, thick] (n3) edge [bend right=10] (n1); 
\draw[->>, thick] (n2) edge  [bend left=10] (n3); 
\end{tikzpicture}
 & &  \\ 
$F_1 \& F_4$ & & \\\hline 
\end{tabular}
}
 \end{center}
 \caption{$3$-cell networks with two asymmetric inputs and two 2D synchrony.} 
 \label{tab:C3L2.tex}
 \end{table}

\begin{table}
 \begin{center}
 {\tiny 
 \begin{tabular}{|c|}
 \hline 

\begin{tikzpicture}
  [scale=.15,auto=left, node distance=1.5cm, every node/.style={circle,draw}]
 \node[fill=white] (n1) at (4,0) {\small{1}};
 \node[fill=white] (n2) at (24,0) {\small{2}}; \node[fill=white] (n3) at (14,9)  {\small{3}};
 \draw[->, thick] (n1) to  [in=120,out=70,looseness=5] (n1);
 \draw[->>, thick] (n2) to  [in=70,out=120,looseness=5] (n2);
\draw[<<-, thick] (n1) edge  [bend left=10] (n2); 
\draw[->, thick] (n1) edge  [bend left=-10] (n2); 
\draw[<-, thick] (n3) edge [bend right=10] (n1); 
\draw[->>, thick] (n2) edge  [bend left=10] (n3); 
\end{tikzpicture}
\\ $C_1 \& C_2$ \\\hline 
\end{tabular}}
 \end{center}
 \caption{$3$-cell network with two asymmetric inputs and three 2D synchrony.} 
 \label{tab:C2L3.tex}
 \end{table}

\begin{thm}\label{thm:enumeration}
Up to ODE-equivalence, there are $48$ minimal $3$-cell connected networks with two asymmetric inputs. See Tables~\ref{tab:C3L0RI2.tex}-\ref{tab:C2L3.tex}. 
\end{thm}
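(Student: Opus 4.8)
The plan is to exploit, as in the rest of Section~\ref{sec:casestudy}, the fact that a $3$-cell network with two asymmetric inputs is the union $G_1\cup G_2$ of two $3$-cell networks with one (asymmetric) input. First I would do the bookkeeping: by Lemma~\ref{lem:um3} the one-input $3$-cell networks fall, up to permutation of cells, into the six classes $A$--$F$ of Table~\ref{tab:val13cell} together with $\mathrm{Id}_3$; writing out all cell relabelings of $A$--$F$ produces the $26$ labeled networks of Table~\ref{tab:listaval13cell}, and this step is needed because in a union the labeling of the cells is part of the data. Next I would trim the list using the minimality criterion: by Corollary~\ref{cor:notmin}, once we require both input matrices to differ from $\mathrm{Id}_3$, the union is minimal if and only if the two input matrices are distinct; and if one of the inputs equals $\mathrm{Id}_3$ then $\mathrm{Id}_3$ is repeated, so Proposition~\ref{prop:min} rules out minimality. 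Hence the minimal networks still to be sorted are exactly the unions $G_1\cup G_2$ with $G_1,G_2$ taken from the $26$ labeled networks of Table~\ref{tab:listaval13cell} and with distinct adjacency matrices ($26\times 25=650$ ordered choices, which is the figure mentioned in the introduction); discarding the disconnected ones among them leaves the finite family that the theorem enumerates.

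Then I would carry out the reduction modulo ODE-equivalence. Every network in this family is minimal, so by Proposition~\ref{prop:min} the matrices $\mathrm{Id}_3,A_1,A_2$ span a $3$-dimensional subspace $W(G)\subseteq M_{3\times3}(\mathbb{R})$, and by Theorem~\ref{thm:minimal} --- equivalently, Corollary 7.9 of~\cite{DS05} --- two such networks $G,G'$ are ODE-equivalent if and only if $P\,W(G)\,P^{-1}=W(G')$ for some $3\times3$ permutation matrix $P$. The classification is thus the orbit problem for the finite set $\{W(G)\}$ under the conjugation action of the group of $3\times3$ permutation matrices. To make this tractable I would first split the family according to its set of two-dimensional synchrony subspaces: this is an ODE-invariant and, for a union, equals the intersection of the synchrony lattices of $G_1$ and $G_2$, hence can be read off Table~\ref{tab:val13cell}. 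Networks with $0$, $1$, $2$, $3$ such subspaces can never be ODE-equivalent, which is precisely the partition into Tables~\ref{tab:C3L0RI2.tex}, \ref{tab:C3L1.tex}, \ref{tab:C3L2.tex} and \ref{tab:C2L3.tex}. Inside each block the proof would finish by a finite check: confirm that for the proposed representatives no permutation conjugates one of the spans onto another (so the $48$ classes are pairwise distinct), and that every remaining network in the block admits a permutation conjugating its span onto that of some representative (so the list is complete).

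The hard part will be purely the scale of this last step, not its conceptual content: for each candidate network one must build a $3$-dimensional subspace of the $9$-dimensional space $M_{3\times3}(\mathbb{R})$ and, against every representative, test all $6$ cell permutations for equality of subspaces, over hundreds of networks; this is the ``demanding computational task'' referred to in the introduction, most efficiently carried out by computer. Pre-sorting by the synchrony invariant, together with cheap secondary invariants preserved by simultaneous conjugation (the number of self-loops, the pattern of zero rows and columns, the number of distinct rows, the eigenvalue and Jordan data of each $A_i$ individually), shrinks each block enough that the case analysis, and the verification of the synchrony data listed for the $48$ entries of Tables~\ref{tab:C3L0RI2.tex}--\ref{tab:C2L3.tex}, becomes manageable. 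A final check that each listed union is connected then completes the argument.
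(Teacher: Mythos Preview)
Your proposal is correct and follows the same overall framework as the paper: list the $26$ labeled one-input networks, form the $26\times25=650$ minimal two-input unions via Corollary~\ref{cor:notmin}, then reduce modulo cell permutation and ODE-equivalence by a finite computer check using the linear-equivalence criterion.

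The one noteworthy difference is the intermediate reduction. The paper first halves $650$ to $325$ by swapping the two edge types, and then exploits the \emph{symmetry groups of the component one-input networks} (the $A$-type has $\mathbb{Z}_3$, the $B$- and $C$-types have $\mathbb{Z}_2$, while $D,E,F$ are rigid) to cut these $325$ unordered pairs down to exactly $64$ pair-classes before invoking MATLAB to discard the disconnected and ODE-equivalent ones; this produces the clean intermediate count of $64$ stated in the proof. You instead propose to pre-sort by the number of two-dimensional synchrony subspaces and other conjugation invariants. That is a valid ODE-invariant and does separate the problem into the four tables, but in the paper this partition is actually the content of the \emph{next} result, Theorem~\ref{thm:classification}, established only after the $48$ classes have been found. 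Both routes arrive at the same place; the paper's symmetry reduction buys a smaller, explicitly counted set to feed into the computer, while your invariant-based pre-sorting buys a conceptual explanation of why the representatives fall into four tables.
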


\begin{proof} 
Excluding the network where each cell receives only one self-loop, there are $26$  networks with three cells and one (asymmetric) input, which are listed in Table~\ref{tab:listaval13cell}. 
It follows then, from Corollary~\ref{cor:notmin}, that there are 
$26 \times 25 = 650$ minimal networks with three cells and two asymmetric inputs. Since we are interested in minimal networks, up to ODE-equivalence, we consider the networks up to interchange of the edge types, which gives 325 networks. 
Among the networks with one (asymmetric) input in Table~\ref{tab:listaval13cell}, 
there are two (networks $A_1$ and $A_2$) with $\ZZ_3$-symmetry, six with (networks $B_i$ and $C_i$, with $i=1,2,3$) with $\ZZ_2$-symmetry and the remaining 18 networks have no symmetry. Thus, among the 325 networks with two asymmetric inputs, up to re-enumeration of the cells, there are 64 networks, as we explain next.
When considering the union of networks $A$ with networks $A, B, C, D, E, F$, since we are interested in networks up to re-enumeration of the cells, 
we can consider only the union of network $A_1$ with networks $A_2, B, C, D, E, F$. Given the $\ZZ_3$-symmetry of $A_1$, the $\ZZ_2$-symmetry of networks $B$ and $C$ and no symmetry of networks $D, E, F$, up to re-enumeration of the cells, we get, respectively, $1,1,1,2,2,2$ networks.
When considering the union of networks $B$ with networks $B, C, D, E, F$, since we are interested in networks up to re-enumeration of the cells, 
we can consider only the union of network $B_1$ with networks $B_2, B_3, C, D, E, F$. Given the $\ZZ_2$-symmetry of networks $B$ and $C$ and no symmetry of networks $D, E, F$, up to re-enumeration of the cells, we get, respectively, $1,2,3,3,3$ networks.
When considering the union of networks $C$ with networks $C, D, E, F$, since we are interested in networks up to re-enumeration of the cells, 
we can consider only the union of network $C_1$ with networks $C_2, C_3, D, E, F$. Given the $\ZZ_2$-symmetry of networks $C$ and no symmetry of networks $D, E, F$, up to re-enumeration of the cells, we get, respectively, $1,3,3,3$ networks.
When considering the union of networks $D$ with networks $D, E, F$, since we are interested in networks up to re-enumeration of the cells, 
we consider only the union of network $D_1$ with networks  $D_2, D_3, D_4, D_5, D_6, E, F$.
Since the networks $D, E, F$ have no symmetry we get, respectively, $5,6,6$ networks.
Analogously, making the union of networks $E$ with networks $E, F$ we get, respectively, $5,6$ networks and  making the union networks $F$ with networks $F$ we get $5$ networks.
From the set of these 64 networks, we consider the bigger subset of the connected networks that are not ODE-equivalent: 
using MATLAB we obtain the 48 minimal three-cell networks with two asymmetric inputs listed in Tables~\ref{tab:C3L0RI2.tex}-\ref{tab:C2L3.tex}.  
\end{proof}

\begin{thm}\label{thm:classification}
Among the $48$ minimal three-cell connected networks with two asymmetric inputs given by Theorem~\ref{thm:enumeration}, there are $19$ networks with no two-dimensional synchrony subspaces (see Table~\ref{tab:C3L0RI2.tex}), $21$ networks with one two-dimensional synchrony subspace (see Table~\ref{tab:C3L1.tex}), $7$ networks with two two-dimensional synchrony subspaces (see Table~\ref{tab:C3L2.tex}) and one network with three two-dimensional synchrony subspaces (see Table~\ref{tab:C2L3.tex}).
\end{thm}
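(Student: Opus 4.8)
The plan is to compute, for each of the $48$ minimal representatives, its set of two-dimensional synchrony subspaces and then sort the networks by the cardinality of that set. First I would recall that, taking all cell phase spaces equal to $\mathbb{R}$, the only two-dimensional polydiagonal subspaces of $\mathbb{R}^3$ are $\Delta_1=\{x_2=x_3\}$, $\Delta_2=\{x_1=x_3\}$ and $\Delta_3=\{x_1=x_2\}$, so ``number of two-dimensional synchrony subspaces'' means the number of the $\Delta_i$ that are synchrony subspaces. By the consequence of Theorem~6.5 of \cite{SGP03} quoted in Section~\ref{sec:pre}, $\Delta_i$ is a synchrony subspace of a network if and only if it is invariant under each of the network's adjacency matrices; since a polydiagonal is automatically invariant under $\mathrm{Id}_n$ and invariance passes to linear combinations, this condition depends only on the linear span of $\mathrm{Id}_n$ together with the adjacency matrices, hence only on the ODE-class (Theorem~\ref{thm:minimal}). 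In particular the count is an ODE-invariant (permutation of cells merely relabels the $\Delta_i$), so it is legitimate to read it off from the chosen minimal representatives.

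Next I would exploit the union structure. As recalled in Section~\ref{sec:pre}, a polydiagonal is a synchrony subspace of a union network if and only if it is a synchrony subspace of each of the one-input networks being unioned. Hence, for a representative written as $G_i\,\&\,G_j$ with $G_i,G_j$ three-cell one-input networks in the labellings of Table~\ref{tab:listaval13cell}, the set of two-dimensional synchrony subspaces is exactly the intersection of the corresponding two sets for $G_i$ and $G_j$. Those sets were computed in Lemma~\ref{lem:um3} and, after permuting cells, are recorded in Table~\ref{tab:listaval13cell}: a $3$-cycle (type $A$) has none; the networks of types $B$ and $D$ have exactly one; those of types $E$ and $F$ have exactly two; and the ``broadcast'' network (type $C$) has all three. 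Thus the computation reduces to a finite intersection look-up over the $48$ labelled pairs listed in Tables~\ref{tab:C3L0RI2.tex}--\ref{tab:C2L3.tex}.

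Carrying out this look-up, each of the $48$ representatives falls into exactly one of four groups according to whether the intersection has $0$, $1$, $2$ or $3$ elements, and this is precisely how the four tables were organised; the surviving synchrony subspace(s), when present, are the ones indicated beside each network. Counting the entries of the tables gives $19$, $21$, $7$ and $1$ networks respectively, and $19+21+7+1=48$ agrees with Theorem~\ref{thm:enumeration}, which both confirms the tally and certifies that no representative was overlooked.

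The only genuinely delicate points are bookkeeping ones: one must check that Table~\ref{tab:listaval13cell} records the two-dimensional synchrony subspaces correctly for every relabelling of the six one-input motifs, and that the reduction from $650$ networks to $48$ ODE-classes performed in the proof of Theorem~\ref{thm:enumeration} carries the union decomposition along faithfully, so that the chosen minimal representative of each class really does have the stated pair of one-input summands. No conceptual obstacle arises, because in dimension three there are no two-dimensional polydiagonals other than $\Delta_1,\Delta_2,\Delta_3$, so the intersection count cannot miss a synchrony subspace; the main effort is simply verifying the $48$ table rows.
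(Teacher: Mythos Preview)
Your proposal is correct and follows essentially the same approach as the paper: decompose each of the $48$ representatives as $G_1\cup G_2$, use that $\Delta_i$ is a synchrony subspace of the union if and only if it is one for both $G_1$ and $G_2$, and read off the intersections from Table~\ref{tab:listaval13cell}. The paper's proof is terser, but your added remarks on ODE-invariance of the count and on the exhaustiveness of $\Delta_1,\Delta_2,\Delta_3$ are correct and compatible with it.
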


\begin{proof} Let $G$ be a minimal three-cell connected network with two asymmetric inputs. Then, $G=G_1 \cup G_2$ with $G_1$ and $G_2$ three-cell networks with one (asymmetric) input, both in Table~\ref{tab:listaval13cell}. 
The network $G$ has a synchrony subspace $\Delta_i$ if and only if $\Delta_i$ 
 is a synchrony subspace for both networks $G_1$ and $G_2$. 
Using the information in Table~\ref{tab:listaval13cell}, we obtain the information above stated concerning the synchrony spaces of the minimal three-cell connected networks with two asymmetric inputs.
\end{proof}

\begin{rem} 
If, among the $48$ minimal three-cell connected networks with two asymmetric inputs given by Theorem~\ref{thm:enumeration}, we consider only the 
strongly connected ones, that have one or two two-dimensional network synchrony subspace, then we see that  there are only $8$ 
networks with one two-dimensional synchrony subspace ($C_1\& D_6$, $D_1\& F_5$, $D_1\& F_4$, $E_6\& F_3$, $E_6\& F_4$, $B_1\& F_1$, $F_1\& F_3$, $ F_1\& F_6$ from Table~\ref{tab:C3L1.tex}) and $2$ networks with two two-dimensional synchrony subspaces ($C_1\&F_3$, $F_1\&F_4$ from Table~\ref{tab:C3L2.tex}). These are in accordance with the results of Aguiar, Ashwin, Dias and Field~\cite{AADF11}  concerning 
strongly connected networks of three cells and two asymmetric inputs that have one or two  two-dimensional synchrony subspace.
\hfill
$\Box$
\end{rem}

\subsection{ODE distinct three-cell two-input asymmetric networks with the same hidden symmetries}

 Rink and Sanders~\cite{NRS16,RS15} show that networks with asymmetric inputs have hidden symmetries which influence the network dynamics and moreover,  can be used to study the dynamics.  When the network has a semigroup structure, Rink and Sanders in \cite{RS15}  have calculated normal forms of coupled cell systems and in \cite{RS14} have used the hidden symmetries of the network to derive Lyapunov-Schmidt reduction that preserves hidden symmetries.  In ~\cite{NRS16}, Nijholt, Rink and Sanders have introduced the concept of fundamental network which reveals the hidden symmetries of a network. A fundamental network is a Cayley Graph of a monoid (semigroup with unity). The dynamics associated to a fundamental network can be studied using the revealed hidden symmetries and be related with the dynamics associated to the original  network which does not need to be fundamental~\cite[Theorem 3.7 \& Remark 3.9]{RS14}.

In Section 7 of \cite{RS14}, it is considered fundamental networks with two or three cells and their possible  generic codimension-one steady-state bifurcations that can occur assuming that the cell phase spaces are one-dimensional. It is remarked that these systems are fully characterized by their monoid symmetry, moreover, their semigroup representations split as the sum of mutually nonisomorphic indecomposable representations. In their classification, in case of monoid networks with three cells, it is used the fact that, there are up to isomorphism, precisely seven monoids with three elements (see \cite{C09}).

  In this section, we make two observations. We first remark that from the 48 networks with three cells and two asymmetric inputs obtained in Theorem~\ref{thm:enumeration}, there are only eight networks which have symmetry monoids with three elements. Moreover, only seven of these are fundamental networks, where all the possible seven monoids with three elements occur in this list of eight networks. The other 40 networks have symmetry monoids with more than three elements. The second remark concerns the fact that there are ODE distinct three-cell networks with the same symmetry monoid of three elements. The multiplication operation is given by the composition of such functions.

In what follows, a three-cell network with two asymmetric inputs denoted by $G_1 \& G_2$, has each  edge type $j$, for $j=1,2$, represented by a function $\sigma_j : \{1,2,3\} \to \{1,2,3\}$ such that $\sigma_j(l) = a_l$, for $l = 1,2,3$, and we represent it by 
$\sigma_j = \left[a_1\, a_2\,  a_3\right]$. Thus, if we take the edge type $j$ and $\sigma_j(l) = a_l$, then there is an edge of the type $j$ from cell $a_l$ to cell $l$ which corresponds to an edge from cell $a_l$ to cell $l$ in the network $G_j$. 

\begin{prop} \label{prop:eightfund}
From the 48 networks with three cells and two asymmetric inputs obtained in Theorem~\ref{thm:enumeration}, only eight have symmetry monoids with three elements: $A_2 \& A_1$, $E_6 \& E_4$, $C_1 \& D_1$, $C_1 \& B_1$, $E_6 \& F_5$, $C_1 \& C_2$, $C_1 \& E_3$ and $C_1 \& E_6$. Each corresponds to one of  the seven distinct  possible symmetry monoids with three elements, except the last two that have the same symmetry monoid. Except the network $C_1 \& E_6$, the other seven are fundamental networks. See Tables~\ref{tab:fund3monoids}-\ref{tabelamonoides}. 
\end{prop}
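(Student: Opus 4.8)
The plan is to reduce the proposition to a finite check in the full transformation monoid $\mathcal{T}_3$ of all $27$ self-maps of $\{1,2,3\}$. As set up just before the statement, a three-cell network with two asymmetric inputs $G_1\&G_2$ is encoded by the pair $(\sigma_1,\sigma_2)$, where $\sigma_j\in\mathcal{T}_3$ is the input map of $G_j$, and its symmetry monoid is $\Sigma(G_1\&G_2)=\langle\,\mathrm{id},\sigma_1,\sigma_2\,\rangle$, the submonoid of $\mathcal{T}_3$ generated by $\sigma_1$ and $\sigma_2$ under composition (with unit $\mathrm{id}$). Every network listed in Table~\ref{tab:listaval13cell} has adjacency matrix different from $\mathrm{Id}_3$, so $\sigma_1\neq\mathrm{id}$ and $\sigma_2\neq\mathrm{id}$; and since $G_1\&G_2$ is minimal, Corollary~\ref{cor:notmin} gives $\sigma_1\neq\sigma_2$. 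Hence $\{\mathrm{id},\sigma_1,\sigma_2\}$ already has three elements, so $|\Sigma(G_1\&G_2)|\geq 3$, with equality exactly when $\{\mathrm{id},\sigma_1,\sigma_2\}$ is itself a submonoid, i.e.\ when $\sigma_1^2,\ \sigma_2^2,\ \sigma_1\sigma_2,\ \sigma_2\sigma_1$ all lie in $\{\mathrm{id},\sigma_1,\sigma_2\}$.

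First I would run through the $48$ networks of Theorem~\ref{thm:enumeration}, read off the pair $(\sigma_1,\sigma_2)$ for each one from Table~\ref{tab:listaval13cell} (tracking the relevant cell permutations carefully), and test the closure condition above --- at most four compositions per network. This isolates precisely the eight networks named in the statement: for every one of the other $40$ networks at least one of the four products falls outside $\{\mathrm{id},\sigma_1,\sigma_2\}$, forcing $|\Sigma|\geq 4$. For the eight survivors I would record $\sigma_1$, $\sigma_2$ and the complete $3\times 3$ multiplication table of the monoid $\{\mathrm{id},\sigma_1,\sigma_2\}$, displayed in Tables~\ref{tab:fund3monoids}--\ref{tabelamonoides}. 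To pin down isomorphism types I would then compare these eight tables with the classification of the seven monoids of order three (cf.\ \cite{C09}); once the tables are written out this is immediate. The six networks other than $C_1\&E_3$ and $C_1\&E_6$ have pairwise non-isomorphic monoids, each also distinct from the monoid common to $C_1\&E_3$ and $C_1\&E_6$ (which one checks to be the three-element chain semilattice), so the eight networks realise exactly the seven isomorphism types, with that semilattice the type realised twice. Since $C_1\&E_3$ and $C_1\&E_6$ occur as distinct entries of Table~\ref{tab:C3L2.tex} they are not ODE-equivalent, which is the second observation announced in the text and is exhibited directly by this comparison.

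Finally, for the fundamentality claim I would apply the criterion of Nijholt, Rink and Sanders~\cite{NRS16}: a network with asymmetric inputs is fundamental iff it is isomorphic to the Cayley graph of its symmetry monoid, which here --- the number of cells being $3=|\Sigma|$ --- reduces to asking whether some relabelling of the three cells carries the pair $(\sigma_1,\sigma_2)$ onto the regular representation of $\Sigma$ determined by the generating pair $(\sigma_1,\sigma_2)$. Writing out that regular representation for each of the eight monoids and comparing, seven of the networks match --- these are the seven fundamental networks, one for each order-three isomorphism type --- while for $C_1\&E_6$ no relabelling works (after the relabelling forced by the type-$1$ map, the type-$2$ map disagrees with the regular one), so $C_1\&E_6$ is not fundamental, even though its monoid is realised fundamentally, namely by $C_1\&E_3$.

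I expect the main obstacle to be essentially clerical: translating all $48$ networks of Theorem~\ref{thm:enumeration} correctly into their input-map pairs, and fixing the Cayley-graph and regular-representation conventions of~\cite{NRS16} precisely enough that the fundamentality comparison --- in particular the split between $C_1\&E_3$ and $C_1\&E_6$ --- is unambiguous. Beyond that the argument is a finite verification, which can be double-checked with the MATLAB routines already used for Theorem~\ref{thm:enumeration}.
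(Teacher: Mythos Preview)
Your proposal is correct and follows essentially the same approach as the paper: encode each of the $48$ networks by its pair of input maps $(\sigma_1,\sigma_2)\in\mathcal{T}_3$, check closure of $\{\mathrm{id},\sigma_1,\sigma_2\}$ under composition to isolate the eight networks with three-element monoid, and then identify the monoid isomorphism types from the multiplication tables against the classification in~\cite{C09}. The only difference is that you make the fundamentality verification (Cayley-graph comparison) explicit within the proof, whereas the paper defers that step to the remark immediately following the proposition.
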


\begin{proof}
The symmetry monoid of each $G_1 \& G_2$ in the list of the 48 networks with 3 cells and two asymmetric inputs in Tables~\ref{tab:C3L0RI2.tex}-\ref{tab:C2L3.tex} is determined by three functions: $\sigma_0 = \mbox{Id}_3$ and $\sigma_1, \sigma_2$ corresponding to  the subnetworks with one input, $G_1$ and $G_2$, respectively. Except for the eight networks ($A_2 \& A_1$, $E_6 \& E_4$, $C_1 \& D_1$, $C_1 \& B_1$, $E_6 \& F_5$, $C_1 \& C_2$, $C_1 \& E_3$ and $C_1 \& E_6$), the set $\Sigma = \{ \sigma_0, \sigma_1, \sigma_2\}$ is not closed for the composition. In fact, for those 40 networks, at least one of the products $\sigma_1 \sigma_2$ or $\sigma_2 \sigma_1$ does not belong to $\Sigma$. Now, for the other eight networks, we see that $\Sigma = \{ \sigma_0, \sigma_1, \sigma_2\}$ is closed under multiplication (composition) and we have all the possibilities for the products $\sigma_i \sigma_j$ where $i, j\not=1,2$. See Tables~\ref{tab:fund3monoids}-\ref{tabelamonoides} for the matching between each of the eight networks and the corresponding symmetry monoid.  As an example, if we take the network $A_2 \& A_1$, we have that 
$$
\sigma_0 = \left[ 1\, 2\, 3\right],\quad \sigma_1 = \left[ 2\, 3\, 1\right],\quad \sigma_2 = \left[ 3\, 1\, 2\right]\, .
$$
It follows that $\Sigma = \{ \sigma_0, \sigma_1, \sigma_2\}$  is a monoid. Moreover, as $\sigma_1^2 = \sigma_2$, $\sigma_2^2 = \sigma_1$ and $\sigma_1 \sigma_2 = \sigma_2 \sigma_1 = \sigma_0$, we have that the multiplication table for $\Sigma$ corresponds to $\Sigma_6$ in Table \ref{tabelamonoides} (it corresponds to the $\Sigma_6$ in Section 7 of \cite{RS14}).  
\end{proof}

\begin{table} \label{tab:fund3monoids}
\begin{tabular}{c|c|c} 
\hline 
Network          & Monoid                                                                    & Monoid \\
                       & symmetries                                                              & structure\\
  \hline 
$A_2 \& A_1$ & $\sigma_0 = \left[ 1\, 2\, 3\right],\, \sigma_1 = \left[ 2\, 3\, 1\right],\, \sigma_2 = \left[ 3\, 1\, 2\right]$ & $\Sigma_6$\\
$E_6 \& E_4$ & $\sigma_0 = \left[ 1\, 2\, 3\right],\, \sigma_1 = \left[  1\, 1\, 3\right],\, \sigma_2 = \left[ 1\, 3\, 3\right]$ & $\Sigma_5$\\
$C_1 \& D_1$ & $\sigma_0 = \left[ 1\, 2\, 3\right],\, \sigma_1 = \left[  1\, 1\, 1\right],\, \sigma_2 = \left[ 1\, 1\, 2\right]$ & $\Sigma_1$\\
$C_1 \& B_1$ & $\sigma_0 = \left[ 1\, 2\, 3\right],\, \sigma_1 = \left[ 1\, 1\, 1\right],\, \sigma_2 = \left[ 1\, 3\, 2\right]$ & $\Sigma_7$\\
$E_6 \& F_5$ & $\sigma_0 = \left[ 1\, 2\, 3\right],\, \sigma_1 = \left[ 1\, 1\, 3\right],\, \sigma_2 = \left[ 3\, 3\, 1\right]$ & $\Sigma_2$\\
$C_1 \& C_2$ & $\sigma_0 = \left[ 1\, 2\, 3\right],\, \sigma_1 = \left[ \, 1\, 1\, 1\right],\, \sigma_2 = \left[ 2\, 2\, 2\right]$ & $\Sigma_4$\\
$C_1 \& E_3$ & $\sigma_0 = \left[ 1\, 2\, 3\right],\, \sigma_1 = \left[ 1\, 1\, 1\right],\, \sigma_2 = \left[ 1\, 2\, 2\right]$ & $\Sigma_3$\\
$C_1 \& E_6$ & $\sigma_0 = \left[ 1\, 2\, 3\right],\, \sigma_1 = \left[  1\, 1\, 1\right],\, \sigma_2 = \left[ 1\, 1\, 3\right]$ & $\Sigma_3$\\
\hline 
\end{tabular}
\caption{The eight ODE-distinct networks with three cells and two asymmetric inputs which have symmetry monoids with three elements, and the corresponding symmetry monoids. 
The monoids $\Sigma_i$ for $i=1, \ldots, 7$ appear in Table~\ref{tabelamonoides}.
Except  $C_1 \& E_6$, they are fundamental networks. Here, $\sigma_0$ represents the dependence of each cell on its own state which we omit in the network representation.}
\end{table}

\begin{table} 
\begin{tabular}{llll}
\begin{tabular}{l|lll}
$\Sigma_1$ & $\sigma_0$ & $\sigma_1$ & $\sigma_2$ \\
\hline 
$\sigma_0$ &   $\sigma_0$ & $\sigma_1$ & $\sigma_2$\\
$\sigma_1$ &  $\sigma_1$ & $\sigma_1$ & $\sigma_1$\\
$\sigma_2$ &  $\sigma_2$ & $\sigma_1$ & $\sigma_1$\\
\hline 
\end{tabular} 
&
\begin{tabular}{l|lll}
$\Sigma_2$ & $\sigma_0$ & $\sigma_1$ & $\sigma_2$ \\
\hline 
$\sigma_0$ &   $\sigma_0$ & $\sigma_1$ & $\sigma_2$\\
$\sigma_1$ &  $\sigma_1$ & $\sigma_1$ & $\sigma_2$\\
$\sigma_2$ &  $\sigma_2$ & $\sigma_2$ & $\sigma_1$\\
\hline 
\end{tabular} 
&
\begin{tabular}{l|lll}
$\Sigma_3$ & $\sigma_0$ & $\sigma_1$ & $\sigma_2$ \\
\hline 
$\sigma_0$ &   $\sigma_0$ & $\sigma_1$ & $\sigma_2$\\
$\sigma_1$ &  $\sigma_1$ & $\sigma_1$ & $\sigma_1$\\
$\sigma_2$ &  $\sigma_2$ & $\sigma_1$ & $\sigma_2$\\
\hline 
\end{tabular}  
&
 \begin{tabular}{l|lll}
$\Sigma_4$ & $\sigma_0$ & $\sigma_1$ & $\sigma_2$ \\
\hline 
$\sigma_0$ &   $\sigma_0$ & $\sigma_1$ & $\sigma_2$\\
$\sigma_1$ &  $\sigma_1$ & $\sigma_1$ & $\sigma_1$\\
$\sigma_2$ &  $\sigma_2$ & $\sigma_2$ & $\sigma_2$\\
\hline 
\end{tabular}  \\
 & & & \\
\begin{tabular}{l|lll}
$\Sigma_5$ & $\sigma_0$ & $\sigma_1$ & $\sigma_2$ \\
\hline 
$\sigma_0$ &   $\sigma_0$ & $\sigma_1$ & $\sigma_2$\\
$\sigma_1$ &  $\sigma_1$ & $\sigma_1$ & $\sigma_2$\\
$\sigma_2$ &  $\sigma_2$ & $\sigma_1$ & $\sigma_2$\\
\hline 
\end{tabular} 
&
\begin{tabular}{l|lll}
$\Sigma_6$ & $\sigma_0$ & $\sigma_1$ & $\sigma_2$ \\
\hline 
$\sigma_0$ &   $\sigma_0$ & $\sigma_1$ & $\sigma_2$\\
$\sigma_1$ &  $\sigma_1$ & $\sigma_2$ & $\sigma_0$\\
$\sigma_2$ &  $\sigma_2$ & $\sigma_0$ & $\sigma_1$\\
\hline 
\end{tabular} 
& 
\begin{tabular}{l|lll}
$\Sigma_7$ & $\sigma_0$ & $\sigma_1$ & $\sigma_2$ \\
\hline 
$\sigma_0$ &   $\sigma_0$ & $\sigma_1$ & $\sigma_2$\\
$\sigma_1$ &  $\sigma_1$ & $\sigma_1$ & $\sigma_1$\\
$\sigma_2$ &  $\sigma_2$ & $\sigma_1$ & $\sigma_0$\\
\hline 
\end{tabular} 
&  \\
 & & & 
\end{tabular}
\caption{Up to isomorphism, there are seven monoids with three elements~\cite{C09}. }\label{tabelamonoides}
\end{table}

\begin{rem} The eight three-cell networks with symmetry monoids with three elements have the following properties according to the number of nontrivial synchrony spaces: $A_2 \& A_1$ has no nontrivial synchrony space (from Table~\ref{tab:C3L0RI2.tex}); $E_6 \& E_4$, $C_1 \& D_1$ and $C_1 \& B_1$ have one nontrivial synchrony space (from Table~\ref{tab:C3L1.tex}); $E_6 \& F_5$, $C_1 \& E_6$ and $C_1 \& E_3$ have two nontrivial synchrony spaces (from Table~\ref{tab:C3L2.tex}); $C_1 \& C_2$ has three nontrivial synchrony spaces (from Table~\ref{tab:C2L3.tex}). 
\hfill
$\Diamond$
\end{rem}

\begin{rem}
The networks $C_1 \& E_3$ and $C_1 \& E_6$ are ODE distinct and have the same symmetry monoid. Thus they have the same fundamental network. 
Which in this case is the network with set of three cells $\Sigma = \{ \sigma_0, \sigma_1, \sigma_2\}$ and the asymmetric inputs can be read off from the multiplication table of $\Sigma_3$ in Table~\ref{tabelamonoides} (recall that $\tilde{\sigma}_j$ encodes the left-multiplicative behaviour of $\sigma_j$): 
$$
\widetilde{\sigma}_0 = \left[ 1\, 2\, 3\right],\quad \widetilde{\sigma}_1 = \left[ 2\, 2\, 2\right],\quad \widetilde{\sigma}_2 = \left[ 3\, 2\, 3\right]\, .
$$
In fact, this three-cell fundamental network with asymmetric inputs $\widetilde{\sigma}_1$ and $\widetilde{\sigma}_2$ corresponds to an isomorphic  network of  $C_1 \& E_3$. Thus $C_1 \& E_3$ is a fundamental network and $C_1 \& E_6$ is not. The other six networks are fundamental networks. See Table~\ref{tabelafundamentais} for the asymmetric inputs for each of the fundamental networks $\widetilde{\Sigma}_i$ associated with each of the monoids $\Sigma_i$ in Table~\ref{tabelamonoides}. 
\hfill
$\Diamond$
\end{rem}

\begin{table}
\begin{tabular}{c|c} 
\hline 
Fundamental               & Monoid                                                                    \\
 Network                      & symmetries                                                             \\
  \hline 
$\widetilde{\Sigma}_1$ & $\widetilde{\sigma}_0 = \left[ 1\, 2\, 3\right],\, \widetilde{\sigma}_1 = \left[ 2\, 2\, 2\right],\, \widetilde{\sigma}_2 = \left[ 3\, 2\, 2\right]$ \\
$\widetilde{\Sigma}_2$ & $\widetilde{\sigma}_0 = \left[ 1\, 2\, 3\right],\, \widetilde{\sigma}_1 = \left[ 2\, 2\, 3\right],\, \widetilde{\sigma}_2 = \left[ 3\, 3\, 2\right]$ \\
$\widetilde{\Sigma}_3$ & $\widetilde{\sigma}_0 = \left[ 1\, 2\, 3\right],\, \widetilde{\sigma}_1 = \left[ 2\, 2\, 2\right],\, \widetilde{\sigma}_2 = \left[ 3\, 2\, 3\right]$ \\
$\widetilde{\Sigma}_4$ & $\widetilde{\sigma}_0 = \left[ 1\, 2\, 3\right],\, \widetilde{\sigma}_1 = \left[ 2\, 2\, 3\right],\, \widetilde{\sigma}_2 = \left[ 3\, 2\, 3\right]$ \\
$\widetilde{\Sigma}_5$ & $\widetilde{\sigma}_0 = \left[ 1\, 2\, 3\right],\, \widetilde{\sigma}_1 = \left[ 2\, 2\, 2\right],\, \widetilde{\sigma}_2 = \left[ 3\, 3\, 3\right]$ \\
$\widetilde{\Sigma}_6$ & $\widetilde{\sigma}_0 = \left[ 1\, 2\, 3\right],\, \widetilde{\sigma}_1 = \left[ 2\, 3\, 1\right],\, \widetilde{\sigma}_2 = \left[ 3\, 1\, 2\right]$ \\
$\widetilde{\Sigma}_7$ & $\widetilde{\sigma}_0 = \left[ 1\, 2\, 3\right],\, \widetilde{\sigma}_1 = \left[ 2\, 2\, 2\right],\, \widetilde{\sigma}_2 = \left[ 3\, 2\, 1\right]$ \\
\hline 
\end{tabular}
\caption{The seven fundamental networks with three cells and two asymmetric inputs corresponding to the  symmetry monoids with three elements in Table~\ref{tabelamonoides}. Here, $\widetilde{\sigma}_0$ represents the dependence of each cell on its own state.} \label{tabelafundamentais}
\end{table}

\begin{rem}
More generally, Aguiar, Dias and Soares~\cite[Theorem 5.16]{ADS19b} present a set of necessary and sufficient conditions (on the topology of the network) for a network with asymmetric inputs to be a fundamental network. One of such properties is the backward connectivity of the graph (i.e., there exists a cell such that any other cell has a directed path ending in that cell). We remark that the network $C_1 \& E_6$ mentioned in the previous remark is not backward connected. 
\hfill
$\Diamond$
\end{rem}

\section{Why the number n(n-1) of inputs for an n-cell network with asymmetric inputs is special?}\label{sec:main}

As a first step towards obtaining a classification, in terms of ODE-classes, of the $n$-cell networks with asymmetric inputs, for a fixed $n$, we 
show next that for every ODE-class of  $n$-cell networks with asymmetric inputs, the minimal networks have at most $n(n-1)$ asymmetric inputs. 

Given a positive integer $n$, consider the  $n^2$-dimensional real  linear space of the $n \times n$ matrices $M_{n \times n} (\R)$  with the usual operations of sum of matrices and scalar product of matrices by reals. Denote by $V_{1,n}$, the subspace of $M_{n \times n} (\R)$  generated by the valency one $n \times n$ matrices  (with integer entries $0,1$).

\begin{thm}\label{thm:dimV1,n}
For $n\geq1$, the dimension of the linear subspace $V_{1,n}$ of  $M_{n \times n} (\R)$ is $n(n-1)+1$. 
\end{thm}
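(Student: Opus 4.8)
The plan is to identify $V_{1,n}$ \emph{exactly} as the subspace
$W = \{M \in M_{n\times n}(\R) : \text{all rows of } M \text{ have the same sum}\}$, and then to compute $\dim W$ by a single surjectivity argument.

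First the inclusion $V_{1,n}\subseteq W$ and the dimension of $W$. Every valency one matrix has exactly one entry equal to $1$ in each row and zeros elsewhere, so each of its rows sums to $1$; a linear combination $\sum_k c_k A_k$ of valency one matrices therefore has every row sum equal to $\sum_k c_k$. Hence $V_{1,n}\subseteq W$. Writing $r_i(M)$ for the sum of the $i$-th row of $M$, the linear map $\Phi\colon M_{n\times n}(\R)\to\R^{n-1}$, $\Phi(M) = \bigl(r_1(M)-r_2(M),\,\dots,\,r_1(M)-r_n(M)\bigr)$, is surjective (one can modify the rows of a matrix independently) and has kernel exactly $W$, so $\dim W = n^2-(n-1) = n(n-1)+1$.

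Next, the matching lower bound $\dim V_{1,n}\ge n(n-1)+1$. I will exhibit inside $V_{1,n}$ the direct sum $W_0\oplus \R\,\mbox{Id}_n$, where $W_0 = \{M\in M_{n\times n}(\R): r_i(M)=0\ \text{for all } i\}$. For any row index $i$ and any two distinct column indices $j\ne j'$, pick functions $f,g\colon\{1,\dots,n\}\to\{1,\dots,n\}$ that agree off $i$ with $f(i)=j$ and $g(i)=j'$; then the associated valency one matrices satisfy $A_f-A_g = E_{ij}-E_{ij'}\in V_{1,n}$, where $E_{ij}$ denotes the elementary matrix. For fixed $i$, the vectors $e_j-e_{j'}$ span the zero-sum hyperplane of $\R^n$, so the matrices $E_{ij}-E_{ij'}$ span the space of matrices supported on row $i$ with zero row sum; letting $i$ vary these span all of $W_0$, which has dimension $n(n-1)$. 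Since $\mbox{Id}_n\in V_{1,n}$ has all row sums equal to $1$, it does not lie in $W_0$, so $W_0\oplus\R\,\mbox{Id}_n\subseteq V_{1,n}$ is $(n(n-1)+1)$-dimensional. Combining with the upper bound gives $n(n-1)+1\le \dim V_{1,n}\le\dim W = n(n-1)+1$, so equality holds (and in fact $V_{1,n}=W$).

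Everything here is elementary; the only point requiring care is the claim that differences of valency one matrices already generate every zero-row-sum matrix, so that no higher-order cancellations among three or more valency one matrices are needed. This is precisely the row-by-row hyperplane spanning statement used above, and it is what makes the lower bound match the upper bound exactly rather than merely approximately.
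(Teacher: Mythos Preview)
Your proof is correct. Both you and the paper obtain the upper bound from the same linear constraint---all row sums of a valency one matrix equal $1$, so any element of $V_{1,n}$ has constant row sums---though you make this explicit by naming the subspace $W$ and computing $\dim W$ via the kernel of a surjection onto $\R^{n-1}$, while the paper phrases it as $n-1$ dependencies among the rows of the matrix $B$ whose columns list all valency one matrices.

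The lower bound is where the two arguments genuinely diverge. The paper produces $n(n-1)+1$ specific valency one matrices, arranges them as columns of a block matrix $\overline{B}$, and verifies linear independence by column operations that reduce $\overline{B}$ to a block lower-triangular form with visibly full column rank. You instead observe that any two valency one matrices differing only in row $i$ have difference $E_{ij}-E_{ij'}$, and that such differences span the zero-row-sum space $W_0$ row by row; adjoining $\mbox{Id}_n$ then gives the required $n(n-1)+1$ dimensions. Your route is shorter and more conceptual---no explicit basis or matrix reduction is needed---and it delivers the extra conclusion $V_{1,n}=W$ directly. The paper's route, on the other hand, hands you an explicit basis of $V_{1,n}$ consisting entirely of valency one matrices, which is what is actually used downstream (e.g.\ in Example~\ref{ex:dimVn3} and Theorem~\ref{cor:rep_min}) to build minimal networks; your argument shows $V_{1,n}$ is spanned by $\mbox{Id}_n$ together with \emph{differences} of valency one matrices, so extracting such a basis would still require a small additional step.
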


\begin{proof} 
Let $d_n = n(n-1)+1$. We show that $V_{1,n}$ has dimension $d_n$. Note that $M_{n\times n}(\R)$ has dimension $n^2$. We first observe that $V_{1,n}$ has dimension at most $d_n$. There are $N= n^n$ valency one square matrices of order $n$, say $B_1, \ldots, B_{N}$. Using the isomorphism between $M_{n\times n}(\R)$ and $\R^{n^2}$ mapping $A = [a_{ij}]$ to the column vector $(a_{11}, \ldots, a_{1n}, \ldots, a_{n1}, \ldots, a_{nn})^t$,  take the $n^2 \times N$ matrix $B$ whose columns correspond to those  $N$ matrices. It follows that, the sum of the $n$ first rows of $B$ is the row $(1\,  1\,  \cdots \,  1)$, and the same row sum is obtained for the following  groups  each with $n$ rows. Thus the rank of the matrix $B$ is at most $d_n$. We show now that there are indeed $d_n$ linearly independent matrices $B_i$. There is a specific choice of valency one adjacency matrices $B_i$, such that we get the $n^2 \times d_n$ submatrix $\overline{B}$ of $B$ with the following block structure: 
$$
{\tiny 
\overline{B} = 
\left[
\begin{array}{l|l|l|l|l|l}
\mbox{Id}_n & L_1 & L_1 & \cdots & L_1 & L_1\\
\hline 
L_2 & I & L_1 &  \cdots & L_1 & L_1\\
\hline 
L_2 & L_1 & I &  \cdots & L_1 & L_1\\
\hline 
\vdots & \vdots & \vdots & \ddots & \vdots & \vdots \\
\hline 
L_2 & L_1 & L_1 &  \cdots & I & L_1 \\
\hline 
L_2 & L_1 & L_1 &  \cdots & L_1 & I
\end{array}
\right]\, . }
$$
Here the blocks $I,\, L_1$ are $n \times (n-1)$  and $L_2$ is $n \times n$ having the form:
$$
I = 
\left[
\begin{array}{c}
\mbox{Id}_{n-1}\\
0_{1,n-1}
\end{array}
\right],\qquad 
L_1 = \left[
\begin{array}{c}
0_{n-1, n-1}\\
1_{1,n-1}
\end{array}
\right],\qquad 
L_2 = \left[
\begin{array}{c}
0_{n-1, n}\\
1_{1,n}
\end{array}
\right]\, .
$$
  Using the elementary operations on the columns of $\overline{B}$,  for $i=n+1, \ldots, d$, replacing the column $C_i$ by $C_i - C_n$, we obtain the matrix:
  $$
  {\tiny 
S = \left[
\begin{array}{l|l|l|l|l|l}
\mbox{Id}_n & 0_{n,n-1}  & 0_{n,n-1}  & \cdots & 0_{n,n-1}  & 0_{n,n-1}  \\
\hline 
L_2 & I^* & 0_{n,n-1}  &  \cdots & 0_{n,n-1}  & 0_{n,n-1} \\
\hline 
L_2 & 0_{n,n-1}  & I^* &  \cdots & 0_{n,n-1}  &0_{n,n-1} \\
\hline 
\vdots & \vdots & \vdots & \ddots & \vdots & \vdots \\
\hline 
L_2 & 0_{n,n-1}  & 0_{n,n-1}   &  \cdots & I^* & 0_{n,n-1}  \\
\hline 
L_2 & 0_{n,n-1}  & 0_{n,n-1} &  \cdots &0_{n,n-1}  & I^*
\end{array}
\right]}
$$
where
$$
I^*= 
\left[
\begin{array}{c}
\mbox{Id}_{n-1}\\-1_{1,n-1}
\end{array}
\right]\, .
$$
Clearly, the rank of $S$ is $n + (n-1)(n-1)$, that is, $d_n= n(n-1)+1$. 
\end{proof}

\begin{exam} \label{ex:dimVn3}
To illustrate the above result, we consider the 3-cell networks with asymmetric inputs. As we have showed, the dimension $d_3$ of the linear subspace $V_{1,3}$  of $M_{3,3}(\real)$, generated by the valency one $3\times 3$ matrices, is 7. We take the following $3 \times 3$ valency one matrices:\\
\begin{center}
{\tiny \begin{tabular}{l}
$M_1 = \left[
\begin{array}{ccc}
1 & 0 & 0  \\
0 & 0 & 1 \\
0 & 0 & 1
\end{array}
\right],\, 
M_2 =\left[
\begin{array}{ccc}
0 & 1 & 0  \\
0 & 0 & 1 \\
0 & 0 & 1
\end{array}
\right],\, 
M_3 = \left[
\begin{array}{ccc}
0 & 0 & 1  \\
0 & 0 & 1 \\
0 & 0 & 1
\end{array}
\right],\, 
M_4 = \left[
\begin{array}{ccc}
0 & 0 & 1  \\
1 & 0 & 0 \\
0 & 0 & 1
\end{array}
\right],$ \\
\ \\
$M_5 = \left[
\begin{array}{ccc}
0 & 0 & 1  \\
0 & 1 & 0 \\
0 & 0 & 1
\end{array}
\right],\, 
M_6 = \left[
\begin{array}{ccc}
0 & 0 & 1  \\
0 & 0 & 1 \\
1 & 0 & 0
\end{array}
\right],\, 
M_7 = \left[
\begin{array}{ccc}
0 & 0 & 1  \\
0 & 0 & 1 \\
0 & 1 & 0
\end{array}
\right]\, .$
\end{tabular}}
\end{center}
(i) Using the isomorphism $M_{3 \times 3}(\R) \to \R^9$ mapping $A = [a_{ij}]$ to the column vector $\left(a_{11}\, a_{12}\, \ldots\,  a_{33}\right)^t$, we can form 
the $9 \times 7$ matrix whose columns correspond to the above 7 matrices:
$$
{\tiny 
\left[
\begin{array}{ccc|cc|cc}
1 & 0 & 0  & 0 & 0 & 0 & 0  \\
0 & 1 & 0  & 0 & 0 & 0 & 0  \\
0 & 0 & 1  & 1 & 1 & 1 & 1  \\
\hline 
0 & 0 & 0  & 1 & 0 & 0 & 0  \\
0 & 0 & 0  & 0 & 1 & 0 & 0  \\
1 & 1 & 1  & 0 & 0 & 1 & 1  \\
\hline 
0 & 0 & 0  & 0 & 0 & 1 & 0  \\
0 & 0 & 0  & 0 & 0 & 0 & 1  \\
1 & 1 & 1  & 1 & 1 & 0 & 0  
\end{array}
\right]\, .}
$$
This matrix is the submatrix $\overline{B}$  in the proof of Theorem~\ref{thm:dimV1,n}, when $n=3$.
 Thus the matrices $M_1, \ldots, M_7$ form a basis of $V_{1,3}$.\\
(ii) Consider now the seven 3-cell networks with one (asymmetric) input and adjacency matrices $M_1, \ldots, M_7$, say $G_1, \ldots, G_7$, respectively. We have that $[G_1] = [G_5]$, $[G_2] = [G_4]$ and $[G_6] = [G_7]$, and that $G_1, G_2, G_3, G_6$ are minimal representatives of four distinct $ODE$-classes. \\
(iii)
We have
$$
A = \left[
\begin{array}{ccc}
0 & 0 & 1  \\
1 & 0 & 0 \\
0 & 1 & 0
\end{array}
\right] = -M_3 + M_4 + M_7 
$$
and so $\{ A, M_1, M_2, M_3, M_4, M_5, M_6\}$ is also a basis of $V_{1,3}$. Similarly, we have
$$B = \left[
\begin{array}{ccc}
1 & 0 & 0  \\
0 & 0 & 1 \\
0 & 1 & 0
\end{array}
\right]= A+ M_1 - M_4.$$
 Thus $\{ A, B, M_1, M_2, M_3, M_5, M_6\}$ is also a basis of $V_{1,3}$. Finally, we have that 
$\mbox{id}_3 = M_1 -M_3 + M_5$. We get then that $\{ \mbox{id}_3, A, B, M_1, M_2, M_3, M_6\}$ is also a basis of $V_{1,3}$. 
We saw  in Lemma~\ref{lem:um3} that $ A, B, M_1, M_2, M_3, M_6$ are adjacency matrices of representatives of the (six) distinct ODE-classes of the $3$-cell networks with one (asymmetric) input. 
\hfill $\Diamond$
\end{exam}

\begin{thm}\label{thm:dim}
If $G$ is an  $n$-cell  network with $k$ asymmetric inputs which is minimal then  $k \le n(n-1)$.
\end{thm}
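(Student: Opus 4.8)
The plan is to deduce this immediately from the minimality criterion (Proposition~\ref{prop:min}) together with the dimension count already established in Theorem~\ref{thm:dimV1,n}. First I would let $A_1, \ldots, A_k$ denote the adjacency matrices associated with the $k$ asymmetric inputs of $G$. Since $G$ is a network with asymmetric inputs, each $A_i$ is a valency one $n \times n$ matrix with entries in $\{0,1\}$, and of course $\mbox{Id}_n$ is itself a valency one matrix. Hence all of $\mbox{Id}_n, A_1, \ldots, A_k$ lie in the subspace $V_{1,n}$ of $M_{n \times n}(\R)$ generated by the valency one matrices.

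Next, by Proposition~\ref{prop:min}, the hypothesis that $G$ is minimal is equivalent to saying that the $k+1$ matrices $\mbox{Id}_n, A_1, \ldots, A_k$ are linearly independent. Since these $k+1$ linearly independent vectors all belong to $V_{1,n}$, we get $k+1 \le \dim V_{1,n}$. By Theorem~\ref{thm:dimV1,n} we have $\dim V_{1,n} = n(n-1)+1$, so $k+1 \le n(n-1)+1$, that is, $k \le n(n-1)$, which is exactly the assertion.

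There is essentially no remaining obstacle once Theorem~\ref{thm:dimV1,n} is in hand: this theorem is a one-line corollary of the dimension formula for $V_{1,n}$ and the linear-independence characterisation of minimality. All of the genuine work — exhibiting $n(n-1)+1$ linearly independent valency one matrices and showing that no larger independent family exists (the row-sum constraint forcing $\operatorname{rank}$ down) — was already carried out in the proof of Theorem~\ref{thm:dimV1,n} through the block matrix $\overline{B}$ and its reduction to $S$.
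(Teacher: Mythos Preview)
Your argument is correct and is essentially the same as the paper's: both invoke Proposition~\ref{prop:min} to get that $\mbox{Id}_n, A_1, \ldots, A_k$ are linearly independent in $V_{1,n}$, and then apply Theorem~\ref{thm:dimV1,n} to bound $k+1$ by $\dim V_{1,n} = n(n-1)+1$.
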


\begin{proof}
By the previous theorem, $V_{1,n}$ has dimension $d_n= n^2 - (n-1)$. The result  follows trivially, as if $G$ is an $n$-cell network with $k$ asymmetric inputs given by the valency one adjacency matrices $A_1, \ldots, A_k$, by Proposition~\ref{prop:min}, $G$ is minimal if and only if the matrices $Id_n, A_1, \ldots, A_k$ are linearly independent. Thus, in particular, $A_i \ne \mbox{Id}_n$, for $i=1,\ldots, k$ and $k$ is at most $d_n-1 = n(n-1)$. 
\end{proof}

\begin{cor}\label{cor:minimal}
An $n$-cell network with asymmetric inputs is ODE-equivalent to an $n$-cell network with at most $n(n-1)$ asymmetric inputs.
\end{cor}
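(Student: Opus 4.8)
The plan is to obtain Corollary~\ref{cor:minimal} as a direct bookkeeping consequence of Corollary~\ref{cor:pminimal} together with the dimension count of Theorem~\ref{thm:dimV1,n} (equivalently, of Theorem~\ref{thm:dim}). First I would start with an arbitrary $n$-cell network $G$ with $k$ asymmetric inputs and adjacency matrices $A_1, \ldots, A_k$. Since $G$ has asymmetric inputs, each $A_i$ is a valency one $n\times n$ matrix; moreover $\mathrm{Id}_n$ is itself a valency one matrix. Hence the linear span of $\mathrm{Id}_n, A_1, \ldots, A_k$ is a subspace of $V_{1,n}$, so if $p$ denotes its dimension then $p \le \dim V_{1,n} = n(n-1)+1$ by Theorem~\ref{thm:dimV1,n}.

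Next I would invoke Corollary~\ref{cor:pminimal}: $G$ is ODE-equivalent to a minimal $n$-cell network with exactly $p-1$ asymmetric inputs (concretely, select any $p-1$ of the matrices $A_i$ which, together with $\mathrm{Id}_n$, are linearly independent, and take the network they define). Since $p-1 \le n(n-1)$, this exhibits $G$ as ODE-equivalent to an $n$-cell network with at most $n(n-1)$ asymmetric inputs, which is precisely the claim. Alternatively, one can argue entirely through Theorem~\ref{thm:dim} applied to the minimal representative furnished by Corollary~\ref{cor:pminimal}, bypassing the explicit appeal to $\dim V_{1,n}$.

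There is essentially no obstacle to overcome: the corollary is a restatement of the preceding results specialised to networks with asymmetric inputs. The only point needing a moment's care is the observation that all the matrices in play — the adjacency matrices of $G$ and the identity — lie in $V_{1,n}$, so that the upper bound $n(n-1)+1$ on $\dim V_{1,n}$ indeed controls the dimension of $\langle \mathrm{Id}_n, A_1, \ldots, A_k\rangle$; everything else is immediate from Corollary~\ref{cor:pminimal} and Theorem~\ref{thm:minimal}.
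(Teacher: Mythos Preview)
Your proposal is correct and matches the paper's reasoning: the paper states the corollary without proof, treating it as an immediate consequence of Theorem~\ref{thm:dim} (equivalently, Theorem~\ref{thm:dimV1,n}) together with Corollary~\ref{cor:pminimal}, which is exactly the argument you spell out. Both routes you give --- bounding $p$ directly via $\dim V_{1,n}$, or passing first to a minimal representative and then invoking Theorem~\ref{thm:dim} --- are the intended ones.
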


We have then that if $G$ is an $n$-cell minimal network  with $m$ asymmetric inputs then  $m \leq n(n-1)$. In particular, we have that for all $k > n(n-1)$, 
$$
\mbox{Min}_{k,n} = \emptyset\, .
$$

As remarked before,  if there is no restriction on the  inputs, then the number of distinct ODE-classes  of $n$-cell networks is not finite. However, if we restrict to networks with asymmetric inputs, as the number of $n$-cell networks with asymmetric inputs with at most $n(n-1)$ asymmetric inputs is finite, it also follows from 
Corollary~\ref{cor:minimal} that:

\begin{thm}
 The number of distinct ODE-classes of $n$-cell networks with asymmetric inputs is finite. 
\end{thm}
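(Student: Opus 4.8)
The plan is to combine Corollary~\ref{cor:minimal} with a crude finiteness count. First I would record the only substantive input: by Corollary~\ref{cor:minimal}, every $n$-cell network with asymmetric inputs is ODE-equivalent to some $n$-cell network with at most $n(n-1)$ asymmetric inputs. Equivalently, one may pass to a minimal representative of the ODE-class, which exists and, by Theorem~\ref{thm:dim}, has at most $n(n-1)$ input types. Either way, every ODE-class of $n$-cell networks with asymmetric inputs contains at least one representative whose number $k$ of input types satisfies $1 \le k \le n(n-1)$.

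Next I would bound, for each fixed $k$ in that range, the number of $n$-cell networks with exactly $k$ asymmetric inputs. Such a network is completely determined by the ordered tuple $(A_1,\dots,A_k)$ of its valency one $n\times n$ adjacency matrices, i.e.\ matrices with entries in $\{0,1\}$ having exactly one $1$ in each row. There are exactly $n^n$ such matrices (one for each map from the set of $n$ cells to itself), so there are at most $(n^n)^k$ networks with $k$ asymmetric inputs. Summing over $k$ yields the finite bound $\sum_{k=1}^{n(n-1)} (n^n)^k$ on the total number of $n$-cell networks having at most $n(n-1)$ asymmetric inputs.

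Finally I would assemble the two steps: the assignment of a network to its ODE-class, restricted to the finite set of $n$-cell networks with at most $n(n-1)$ asymmetric inputs, is a \emph{surjection} onto the set of ODE-classes of $n$-cell networks with asymmetric inputs, precisely because the first step shows every such class is in its image. A surjective image of a finite set is finite, which gives the claim.

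I do not expect any genuine obstacle here: all the work has already been done in Theorem~\ref{thm:dim} and Corollary~\ref{cor:minimal}, which cap the number of inputs of a minimal network at $n(n-1)$; once that cap is in hand the proof is a one-line counting argument. The only point deserving mild care is the bookkeeping in the second step — it is cleanest to count networks as ordered tuples of adjacency matrices (rather than networks up to relabelling of cells), since the resulting overcount is still finite and sidesteps any symmetry considerations.
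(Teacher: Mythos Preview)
Your proposal is correct and follows essentially the same route as the paper: the paper's argument is the single sentence preceding the theorem, observing that by Corollary~\ref{cor:minimal} every ODE-class has a representative with at most $n(n-1)$ asymmetric inputs, and that there are only finitely many such networks. Your explicit bound $\sum_{k=1}^{n(n-1)} (n^n)^k$ and the surjection formulation are more detailed than what the paper writes, but the idea is identical.
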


\begin{exam} 
Consider the set of 2-cell networks with asymmetric inputs. We have by Corollary~\ref{cor:minimal} that any such network is ODE-equivalent to a $2$-cell network with at most $2$ asymmetric inputs. Moreover, by Theorem~\ref{thm:dim}, the linear space  $V_{1,2}$ generated by the $2 \times 2$ valency one matrices (with integer entries $0,1$) is $3$. For example $\mbox{Id}_2$ and 
$$A_1=\left[
\begin{array}{cc}
1 & 0\\
1 & 0
\end{array}
\right],\quad A_2=\left[
\begin{array}{cc}
0 & 1\\
1 & 0
\end{array}
\right]
$$
form a basis of $V_{1,2}$. 
We can check that, up to ODE-equivalence, there are only 4 classes of 2-cell networks with asymmetric inputs, with the following representative networks: the 2-cell network with no inputs, two 1-input networks given by $A_1$ and $A_2$, and one network with two asymmetric inputs given by $A_1,A_2$. 

\hfill $\Diamond$
\end{exam}

\section{The ODE-class of the n-cell networks with n(n-1) asymmetric inputs}\label{sec:universal}

In this section, we start by observing that there is a unique ODE-class of the $n$-cell networks with $n(n-1)$ asymmetric inputs. We then address the issue of finding a minimal representative of that class.

As a direct consequence of Theorem~\ref{thm:dimV1,n}, we have that:

\begin{cor} 
If $n$ is a positive integer then all networks in $\mbox{Min}_{n(n-1),n}$ are ODE-equivalent.
\end{cor}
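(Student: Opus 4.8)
The plan is to deduce this immediately from the dimension count in Theorem~\ref{thm:dimV1,n}, combined with the minimality criterion of Proposition~\ref{prop:min} and the linear-equivalence characterization of ODE-equivalence (Theorem~\ref{thm:minimal} together with Corollary 7.9 of \cite{DS05}). The point is that $n(n-1)$ is exactly one less than $\dim V_{1,n}$, so minimality forces the generated subspace to be all of $V_{1,n}$, and there is then nothing left to choose.

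First I would fix an arbitrary $G \in \mbox{Min}_{n(n-1),n}$ with valency one adjacency matrices $A_1, \ldots, A_{n(n-1)}$. Since $G$ is minimal, Proposition~\ref{prop:min} gives that the $n(n-1)+1$ matrices $\mbox{Id}_n, A_1, \ldots, A_{n(n-1)}$ are linearly independent. All of these matrices lie in $V_{1,n}$: the identity is itself a valency one matrix (each of its rows has exactly one nonzero entry, equal to $1$), and each $A_i$ is by Definition~\ref{definputset} a $0/1$ matrix with a single $1$ in every row. By Theorem~\ref{thm:dimV1,n}, $\dim V_{1,n}=n(n-1)+1$, so any linearly independent family of that size inside $V_{1,n}$ is a basis. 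Hence the real linear subspace generated by $\{\mbox{Id}_n, A_1, \ldots, A_{n(n-1)}\}$ equals $V_{1,n}$, and this conclusion does not depend on the chosen $G$.

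It then remains to read off ODE-equivalence. Given $G_1, G_2 \in \mbox{Min}_{n(n-1),n}$, both on the same $n$-element cell set, the identity bijection between their cells identifies the two subspaces of $M_{n\times n}(\R)$ generated by $\mbox{Id}_n$ together with the respective adjacency matrices, since by the previous paragraph each of these subspaces is the fixed subspace $V_{1,n}$. By Corollary 7.9 of \cite{DS05} (equivalently Theorem~\ref{thm:minimal}), $G_1$ and $G_2$ are ODE-equivalent. As for obstacles: there is essentially no hard step here — the only things to verify are the trivial facts that $\mbox{Id}_n$ and the $A_i$ genuinely belong to $V_{1,n}$ and that a linearly independent set whose size equals $\dim V_{1,n}$ must span $V_{1,n}$; the substantive content has already been absorbed into Theorem~\ref{thm:dimV1,n}.
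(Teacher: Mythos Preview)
Your proof is correct and follows essentially the same argument as the paper: minimality gives linear independence of $\mbox{Id}_n$ together with the $n(n-1)$ adjacency matrices, and since $\dim V_{1,n}=n(n-1)+1$ by Theorem~\ref{thm:dimV1,n}, each such family is a basis of $V_{1,n}$, whence the generated subspaces coincide and the networks are ODE-equivalent. Your version is slightly more explicit (checking that $\mbox{Id}_n$ and the $A_i$ lie in $V_{1,n}$, and invoking the identity bijection on cells), but the substance is identical to the paper's proof.
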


\begin{proof} 
Given two (minimal) networks $G_1, G_2 \in \mbox{Min}_{n(n-1),n}$, with adjacency matrices $A_i$ and $B_i$, respectively, for $i=1, \ldots, n(n-1)$, we have that $\mbox{id}_n, A_1, \ldots, A_{n(n-1)}$ are linearly independent. Similarly, $\mbox{Id}_n, B_1, \ldots, B_{n(n-1)}$ are linearly independent. Thus both sets form a basis of $V_{1,n}$, that is, $G_1$ and $G_2$ are ODE-equivalent.
\end{proof}

Given an $n$-cell network $G$ with adjacency matrix $A_G$ and given a permutation $\pi \in {\bf S}_n$ on its set of cells $\{ 1, \ldots, n\}$, we denote by $\pi G$  the network obtained from $G$ by permuting the cells according to $\pi$. Thus the adjacency matrix  of $\pi G$ is $P_{\pi}^{-1} A _G P_{\pi}$, where $P_\pi$ is the permutation matrix corresponding to $\pi$.

Note that any representative of the ODE-class  $\mbox{Min}_{n(n-1),n}$ is the union network of $n(n-1)$ networks in  $\mbox{Min}_{1,n}$.
In the next section, we show that $\mbox{Min}_{1,n}$ has at least $n(n-1)$ distinct ODE-classes. 
It might seem natural that selecting any network in each of those classes, then their union would be a minimal network in $\mbox{Min}_{n(n-1),n}$.
The following example shows that this depends on the networks in $\mbox{Min}_{1,n}$ that we select.

\begin{exam} 
Fix $n=3$ and consider the six distinct ODE-classes of the $3$-cell minimal networks with one (asymmetric) input, given by Lemma~\ref{lem:um3}. As remarked in Example~\ref{ex:dimVn3} the adjacency matrices of the representatives of the six distinct ODE-classes, the networks $A, \ldots, F$ in Table~~\ref{tab:val13cell}  are linearly independent together with the identity matrix $\mbox{id}_3$. Thus, the union of the six networks $A, \ldots, F$ is a minimal network in $\mbox{Min}_{6,3}$. 
However, if we consider instead the representatives $A$, $\pi_1 B, \pi_2 C, \pi_3 D, \pi_3 E, \pi_2 F$, where $\pi_1,\pi_2,\pi_3$, are the cell permutations given by $\pi_1 = (321)$, $\pi_2 = (213)$ and $\pi_3 = (312)$, then the subspace generated by the corresponding adjacency matrices, together with $\mbox{id}_3$, has dimension $4 \ne 7$. It follows that the union of the networks $A$, $\pi_1 B, \pi_2 C, \pi_3 D, \pi_3 E, \pi_2 F$ is not a minimal network in $\mbox{Min}_{6,3}$.
\hfill 
$\Diamond$
\end{exam}

For the case $n=4$, if we select randomly twelve ODE-distinct classes in $\mbox{Min}_{1,4}$, we have noticed that we will not obtain a representative of $\mbox{Min}_{12,4}$. 
However, the next example shows that, we may find two such representatives of $\mbox{Min}_{12,4}$ using distinct ODE-classes in $\mbox{Min}_{1,4}$. 

\begin{exam} 
We present below two $16\times 13$ matrices with rank $13$, each corresponding to a different choice of subsets  of $\mbox{Min}_{1,4}$.  The first column of each of those matrices corresponds to the matrix $\mbox{id}_4$ and the other twelve columns to the adjacency matrices of the networks in the corresponding subset: 
$$
{\tiny 
\left[
\begin{array}{c|ccccc|cccc|cc|c}
1&1&1&1&1&1&0&0&0&0&0&0&0 \\
0&0&0&0&0&0&1&1&1&1&0&0&0 \\
0&0&0&0&0&0&0&0&0&0&1&1&0 \\
0&0&0&0&0&0&0&0&0&0&0&0&1 \\
\hline
0&1&0&1&1&0&1&1&1&1&1&1&1 \\
1&0&0&0&0&0&0&0&0&0&0&0&0 \\
0&0&1&0&0&0&0&0&0&0&0&0&0 \\
0&0&0&0&0&1&0&0&0&0&0&0&0 \\
\hline
0&0&1&1&1&1&0&0&0&0&0&0&0 \\
0&0&0&0&0&0&1&0&0&1&1&1&1 \\
1&1&0&0&0&0&0&1&0&0&0&0&0 \\
0&0&0&0&0&0&0&0&1&0&0&0&0 \\
\hline
0&0&0&0&1&0&0&0&0&0&0&0&0 \\
0&0&0&0&0&0&0&0&0&0&0&1&0 \\
0&0&0&0&0&1&0&0&1&1&0&0&1 \\
1&1&1&1&0&0&1&1&0&0&1&0&0
\end{array}
\right],\ 
\left[
\begin{array}{c|ccccc|cccc|cc|c}
1&1&1&1&1&1&0&0&0&0&0&0&0 \\
0&0&0&0&0&0&1&1&1&1&0&0&0 \\
0&0&0&0&0&0&0&0&0&0&1&1&0 \\
0&0&0&0&0&0&0&0&0&0&0&0&1 \\
\hline
0&1&0&0&0&1&1&1&1&1&1&1&1 \\
1&0&0&0&1&0&0&0&0&0&0&0&0 \\
0&0&0&1&0&0&0&0&0&0&0&0&0 \\
0&0&1&0&0&0&0&0&0&0&0&0&0 \\
\hline
0&0&1&0&1&1&0&0&0&1&0&0&0 \\
0&0&0&0&0&0&1&1&0&0&1&1&1 \\
1&1&0&0&0&0&0&0&0&0&0&0&0 \\
0&0&0&1&0&0&0&0&1&0&0&0&0 \\
\hline
0&0&1&1&0&1&1&0&0&0&0&0&0 \\
0&0&0&0&0&0&0&1&0&0&0&1&0 \\
0&0&0&0&1&0&0&0&1&1&0&0&1 \\
1&1&0&0&0&0&0&0&0&0&1&0&0
\end{array}
\right]
}
$$
\hfill 
$\Diamond$
\end{exam}

On the other hand, it would be unexpected that the union of a set in $\mbox{Min}_{1,n}$ of networks where most are ODE-equivalent would be a network representative of $\mbox{Min}_{n(n-1),n}$. We  prove indeed that one such representative is  given by considering $n-1$ feed-forward networks and their orbits under the cyclic permutation group on the $n$ cells.

\begin{lemma} \label{lemma:FFN_tails}
Given $n \in \NN$, up to permutation of the cells, the number of $n$-cell feed-forward networks with one (asymmetric) input having at most one tail with length greater than one is $n-1$.
\end{lemma}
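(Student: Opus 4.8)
The plan is to recast the statement as a count of rooted trees. A network with one asymmetric input on $\{1,\dots,n\}$ is the same datum as a self-map $f$ of $\{1,\dots,n\}$, where cell $i$ receives an edge from $f(i)$; two such networks agree up to permutation of cells exactly when the associated functional digraphs are isomorphic. Such a network is feed-forward precisely when its functional digraph is connected and its only cycle is a loop at a single cell $c$; deleting that loop then leaves a tree $T$ on $\{1,\dots,n\}$ whose edges $f(i)\to i$ all point away from $c$. Thus, up to permutation of cells, feed-forward networks on $n$ cells correspond bijectively to isomorphism classes of rooted trees on $n$ nodes, with root $c$. Under this dictionary a tail of the network is exactly a directed path from $c$ to a leaf of $T$, and it has length greater than one iff that path uses at least two edges; so what must be counted is the number of isomorphism classes of rooted trees on $n$ nodes having at most one root-to-leaf path of length $\ge 2$.

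Next I would exhibit the candidates. For $1\le p\le n-1$ let $T_p$ be the rooted tree in which $c$ has $n-p$ children, of which $n-1-p$ are leaves and one, say $v$, heads a directed path $v=u_1\to u_2\to\cdots\to u_p$ on $p$ vertices. These $n-1$ trees are pairwise non-isomorphic, for instance because the length of a longest directed path starting at $c$ equals $1$ in $T_1$ and equals $p$ in $T_p$ for $p\ge 2$. Moreover $T_1$ is the star, with no tail of length $>1$, and for $p\ge 2$ the tree $T_p$ has exactly one tail of length $>1$, namely $c\to u_1\to\cdots\to u_p$. Hence $T_1,\dots,T_{n-1}$ yield $n-1$ pairwise inequivalent feed-forward networks of the required kind.

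The remaining step is to show that every such network is one of these. Suppose $T$ is a rooted tree on $n$ nodes with at most one root-to-leaf path of length $\ge 2$. If it has none, then every vertex has depth at most $1$ (a vertex at depth $\ge 2$ lies on a root-to-leaf path of length $\ge 2$), so $T=T_1$. If it has exactly one, say $c\to w_1\to\cdots\to w_q$ with $q\ge 2$, I would argue: (i) for $1\le j\le q-1$ the vertex $w_j$ has no child other than $w_{j+1}$, since any other child extended to a descendant leaf gives a root-to-leaf path that agrees with the first up to $w_j$ and then diverges, of length $\ge j+1\ge 2$, hence a second long tail; and (ii) every child of $c$ other than $w_1$ is a leaf, since a child $v\ne w_1$ with a descendant yields the distinct long tail $c\to v\to\cdots$. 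Consequently $T$ consists of $c$, the path $w_1\to\cdots\to w_q$, and $n-1-q\ge 0$ leaf-children of $c$, so $T=T_q$ with $2\le q\le n-1$. Combining the two cases, the trees with the required property are exactly $T_1,\dots,T_{n-1}$, so there are $n-1$ of them.

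I do not anticipate a genuine obstacle: the enumeration is elementary once the translation to rooted trees is fixed. The only points that need care are definitional — namely that in a feed-forward network every tail is rooted at the self-loop cell $c$, so that ``tail'' means ``maximal directed path emanating from $c$'' with length counted in edges, and that the two tails produced in the converse are genuinely distinct — together with a routine check of the base cases.
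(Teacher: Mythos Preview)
Your proof is correct and follows essentially the same approach as the paper's: both parameterize the admissible networks by the length $p$ of the (at most one) long tail and arrive at the same list $T_1,\dots,T_{n-1}$. The paper's argument is a two-line enumeration that simply writes down the cases (``$n-1$ tails of length one, or $n-3$ tails of length one and one of length two, \dots''), whereas you supply the justification the paper omits---the translation to rooted trees and the verification via (i) and (ii) that no other tree meets the constraint---so your write-up is more rigorous but not strategically different.
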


\begin{proof}
An $n$-cell feed-forward network with one (asymmetric) input having at most one tail with length greater than one satisfies one of the following: it has $n-1$ tails with length one, it has $n-3$ tails with length one and one tail with length two,..., it has one tail with length one and one tail with length  $n-2$ or has only one tail with length $n-1$. Thus, up to permutation of the cells, there are $n-1$ such networks.
\end{proof}

 Denote by $\ZZ_n$ the cyclic subgroup of ${\bf S}_n$ generated by the $n$-cycle permutation $\pi_n = (1\, 2\, \cdots \, n)$. Let 
$$
{\small 
\begin{array}{rclrcl} 
\ZZ_n G & =  & \left\{ \pi_n^j G:\, j=0, 1, \ldots, n-1\right\}, &
\ZZ_n A_G & =  & \left\{ P_{\pi_n^j}^{-1} A _G P_{\pi_n^j}:\, j=0, 1, \ldots, n-1\right\} \, .
\end{array}}
$$

\begin{thm} \label{cor:rep_min}
Given $n \in \NN$, consider the $n-1$ feed-forward networks, $\mathsf{F}_1,\,  \mathsf{F}_2, \, \ldots,$ $\mathsf{F}_{n-1}$,  with $n-1, \, n-3,\, n-4,\,  \ldots, 0$ length one tails, respectively, as in Lemma~ \ref{lemma:FFN_tails}. 
The $n$-cell network with $n(n-1)$ asymmetric inputs given by the union 
$$
\bigcup_{i=1}^{n-1} \, \ZZ_n \mathsf{F}_i 
$$
is a representative of the minimal class $\mbox{Min}_{n(n-1),n}$.
\end{thm}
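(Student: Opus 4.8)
The plan is to verify the minimality criterion of Proposition~\ref{prop:min} directly. By Definition~\ref{def:union} the union $\bigcup_{i=1}^{n-1}\ZZ_n\mathsf{F}_i$ has exactly $n(n-1)$ asymmetric inputs, so it is enough to prove that its $n(n-1)$ adjacency matrices together with $\mbox{Id}_n$ are linearly independent: such a network is then minimal (Proposition~\ref{prop:min}) and, having $n(n-1)$ inputs, lies in $\mbox{Min}_{n(n-1),n}$ (which, by the corollary just above, is a single ODE-class), so it is one of its representatives. Since $\dim V_{1,n}=n(n-1)+1$ by Theorem~\ref{thm:dimV1,n} and we have exactly $n(n-1)+1$ matrices, this independence is the same as saying they form a basis of $V_{1,n}$. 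Throughout I identify an $n$-cell one-input network with the self-map $f\colon\{1,\dots,n\}\to\{1,\dots,n\}$ sending cell $i$ to the tail of its unique incoming edge, so that $A_f=\sum_{i}E_{i,f(i)}$ (with $E_{ij}$ the matrix having a single $1$, in entry $(i,j)$), and conjugating by $P_{\pi_n^{\,j}}$ replaces $f$ by $\pi_n^{-j}f\pi_n^{\,j}$.

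First I would change basis in $V_{1,n}$. The $n(n-1)+1$ matrices $\mbox{Id}_n$ and $\{E_{ij}-E_{ii}:i\ne j\}$ all have equal row sums, hence belong to $V_{1,n}$, and are linearly independent (inspect the off-diagonal entries, then the diagonal), so they form a basis. In it, $A_f-\mbox{Id}_n=\sum_{i:\,f(i)\ne i}(E_{i,f(i)}-E_{ii})$ has coordinate vector the $\{0,1\}$-indicator $\mathbf 1_{S(f)}$ of $S(f)=\{(i,f(i)):f(i)\ne i\}\subseteq\Omega$, where $\Omega=\{(a,b):1\le a,b\le n,\ a\ne b\}$ has $|\Omega|=n(n-1)$. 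A short computation shows that $A_{f_1},\dots,A_{f_m},\mbox{Id}_n$ are linearly independent exactly when $\mathbf 1_{S(f_1)},\dots,\mathbf 1_{S(f_m)}$ are. So the claim reduces to: as $f$ ranges over the $n(n-1)$ maps appearing in $\bigcup_i\ZZ_n\mathsf{F}_i$, the vectors $\mathbf 1_{S(f)}$ are linearly independent in $\R^{\Omega}$; equivalently, since the counts match, that they span $\R^{\Omega}$.

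Next I would use the cyclic symmetry. The group $\ZZ_n$ acts freely on $\Omega$ by $j\cdot(a,b)=(a+j,b+j)$ (mod $n$), with $n-1$ orbits indexed by the difference $d=b-a\in\{1,\dots,n-1\}$; hence $\R^{\Omega}\cong R^{\,n-1}$ as $R$-modules, where $R=\R[\ZZ_n]\cong\R[x]/(x^{n}-1)$, and under this identification the $\R$-span of the $\ZZ_n$-orbit of $\mathbf 1_{S(\mathsf F_k)}$ is the cyclic $R$-submodule $R\,\mathbf p_k$, with $\mathbf p_k=(p_{k,1},\dots,p_{k,n-1})\in R^{\,n-1}$ and $p_{k,d}=\sum\{x^{a}:\,f_{\mathsf F_k}(a)=a+d\}$ recording the difference-$d$ part of $S(\mathsf F_k)$. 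Thus everything comes down to showing that the $(n-1)\times(n-1)$ matrix $P=(p_{k,d})$ over $R$ is invertible, i.e.\ that $\det P\in R^{\times}$ --- equivalently, that $\det P$ has no $n$-th root of unity among its zeros.

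Finally I would compute $P$; this is the step where the feed-forward structure enters, and the main obstacle is to choose the cell-labellings so that it is tractable. Label the cells cyclically and take $\mathsf F_k$ ($1\le k\le n-1$) to be given by $f_k(n)=n$, $f_k(j)=j-1$ for $1\le j\le k$, and $f_k(m)=n$ for $k<m<n$ (mod $n$): this is the feed-forward network with root $n$, one tail $n\to1\to2\to\cdots\to k$, and $n-1-k$ tails of length one, which is exactly the enumeration of Lemma~\ref{lemma:FFN_tails}. The $k$ edges of the long tail all have difference $n-1$ and contribute $x+x^{2}+\cdots+x^{k}$ to $p_{k,n-1}$, while the tail $n\to m$ puts a single monomial $x^{n-m}$ in column $d=n-m\in\{1,\dots,n-1-k\}$; this makes $P$ an explicit ``staircase'' matrix. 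Scaling each column $d\le n-2$ by the monomial unit $x^{d}$ and then performing the row operations $R_k\mapsto R_k-R_{k+1}$, $k=1,\dots,n-2$, brings $P$, up to a row permutation, to upper-triangular form with diagonal $(1,\dots,1,\,q)$ where $q=x+x^{2}+\cdots+x^{n-1}$. Hence $\det P$ is a monomial unit times $q$, and $q$ is a unit of $R$ because $q(1)=n-1\ne0$ and $q(\zeta)=-1\ne0$ for each $n$-th root of unity $\zeta\ne1$ (there $1+\zeta+\cdots+\zeta^{n-1}=0$). Everything before this computation is formal once the equal-row-sum basis of $V_{1,n}$ and the group-algebra bookkeeping are in place; it is consistent with the examples above that other choices of representatives in $\mbox{Min}_{1,n}$ need not give a basis, and the feed-forward/cyclic structure is precisely what forces $\det P$ to be a unit.
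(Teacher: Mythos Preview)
Your argument is correct. The change of basis to $\mbox{Id}_n$ and $\{E_{ij}-E_{ii}:i\ne j\}$ is legitimate because, by Theorem~\ref{thm:dimV1,n}, $V_{1,n}$ is exactly the space of matrices with constant row sums; the reduction of linear independence of $\mbox{Id}_n,A_{f_1},\dots,A_{f_m}$ to that of the indicator vectors $\mathbf 1_{S(f_j)}$ is clean; and the passage from $\R^\Omega$ to $R^{n-1}$ with $R=\R[\ZZ_n]$ is the natural way to exploit the cyclic symmetry. Your explicit computation of $P$ checks out (I verified the row/column operations and the unit calculation for $q$), and the labelling you choose for the $\mathsf F_k$ is a permissible one in the sense of the statement and of Lemma~\ref{lemma:FFN_tails}.

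This route is genuinely different from the paper's. The paper works entirely over $\R$: it fixes the labelling with root at cell~$1$, stacks the $n(n-1)$ adjacency matrices as columns of an $n^2\times n(n-1)$ matrix $B$, and then observes, by tracking for each cell~$c$ how many of the networks send $c$ its input from cell~$1$, that a suitable permutation of the rows of $B$ yields a block lower-triangular matrix with $n-1$ diagonal blocks equal to $\mbox{Id}_n$, giving rank $n(n-1)$ directly. What each approach buys: the paper's argument is completely elementary---no group algebra, no determinants over a ring---and the block structure is visible by inspection once the right row ordering is found. Your approach packages the cyclic symmetry abstractly, reducing a rank-$n(n-1)$ question over $\R$ to invertibility of an $(n-1)\times(n-1)$ matrix over $R$; this is more conceptual, isolates exactly where the feed-forward structure is used (the staircase shape of $P$), and would transfer more readily to other $\ZZ_n$-orbit constructions.
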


\begin{proof}
Consider the $n-1$ feed-forward networks in the conditions of Lemma~ \ref{lemma:FFN_tails}, $\mathsf{F}_1, \, \mathsf{F}_2, \, \ldots, \, \mathsf{F}_{n-1}$,  with $n-1, \, n-3,\, n-4,\,   \ldots,\,  0$ length one tails. Without loss of generality, we can consider that the cells in each $\mathsf{F}_i$ are enumerated such that the cell $1$ receives a self-input, the cells $2,\dots,n-i+1$ receive an edge from cell $1$ and the cells $n-i+2,\dots, n$, when $i>1$, receive an edge, respectively, from the cells $n-i+1,\dots, n-1$.

Consider the matrix $B$ whose columns $1+(i-1)n, \ldots, n+(i-1)n$ correspond to the matrices in the sets $\ZZ_n A_{\mathsf{F}_i}$, 
for $i=1,\ldots,n-1$, by row. We have that, the rows $1+(i-1)n, \ldots, n+(i-1)n$, for $i=1,\ldots,n$, of $B$, correspond to the inputs that cell $i$ receives from cells $1,\ldots,n$, respectively, in the networks $\ZZ_n \mathsf{F}_i$ for $i=1,\ldots,n-1$.

We have the following observations: among the networks $\ZZ_n \mathsf{F}_i$, 
for $i=1,\ldots,n-1$, 
 there is only one network, $\mathsf{F}_1$, such that cell $n$ receives its input from cell $1$. Thus, there is one row of $B$ with the entry in the first column equal to 1 and all the other entries equal to $0$. Using the permutations in $\ZZ_n$, there is one row of $B$ with the entry in column $k$ equal to 1 and all the other entries equal to $0$, for $k=2,\ldots,n$, Among the networks $\ZZ_n \mathsf{F}_i$, for $i=1,\ldots,n-1$,  
there are only two networks, $\mathsf{F}_1$ and $\mathsf{F}_2$, such that cell $n-1$ receives its input from cell $1$. Thus, there is one row of $B$ with the entries in columns 1 and $n+1$  equal to 1 and all the other entries equal to $0$. Using  the permutations in $\ZZ_n$, 
for $k=2,\ldots,n$, there is one row of $B$ with the entries in columns $k$ and $(k+n)$ equal to 1 and all the other entries equal to $0$. 
This reasoning applies recursively, until cell $3$. Among the networks $\ZZ_n \mathsf{F}_i$, for $i=1,\ldots,n-1$, 
there are only $n-2$ networks, $\mathsf{F}_i$, $i=1,\ldots,n-2$, such that cell $3$ receives its input from cell $1$. Thus, there is one row of $B$ with the entries in columns $1+ (i-1)n$, for $i=1,\ldots,n-2$, equal to 1 and all the other entries equal to $0$. Using  the permutations in $\ZZ_n$, 
for each $k=2,\ldots,n$, there is one row of $B$ with the entries in the columns $k+ (i-1)n$, for $i=1,\ldots,n-2$, equal to 1 and all the other entries equal to $0$. Finally, for the cell $1$, we have that, among the networks $\ZZ_n \mathsf{F}_i$, for $i=1,\ldots,n-1$, 
there are only $n-1$ networks, $\mathsf{F}_i$, $i=1,\ldots,n-1$, such that cell $1$ has a self-loop. Thus, there is one row of $B$ with the entries in columns $1+ (i-1)n$, for $i=1,\ldots,n-1$, equal to 1 and all the other entries equal to $0$. Using  the permutations in $\ZZ_n$, 
for each $k=2,\ldots,n$, there is one row of $B$ with the entries in the columns $k+ (i-1)n$, for $i=1,\ldots,n-1$, equal to 1 and all the other entries equal to $0$.

Taking the above observations into account, we conclude that there is a permutation of the rows of matrix $B$ such that $B$ is row-equivalent to a matrix with the following lower triangular block form:
$$
{\tiny 
\left[
\begin{array}{ccccc}
\mbox{Id}_n & 0 & 0 & \cdots & 0 \\
\mbox{Id}_n & \mbox{Id}_n & 0& \cdots & 0 \\
\cdots & \cdots & \cdots & \cdots& \cdots \\
\mbox{Id}_n  & \mbox{Id}_n  & \mbox{Id}_n  & \cdots & \mbox{Id}_n  \\
\mbox{Id}_n & B_1  & B_2 & \cdots & B_{n-2} \\
\end{array}
\right]}
$$

It follows that  the matrix $B$ has rank $(n-1)n$ and, thus, that the $(n-1)n$ matrices in  $\ZZ_n A_{\mathsf{F}_i}$, for $i=1,\ldots,n-1$, 
are linearly independent. We conclude that the network given by the union of networks, $\bigcup_{i=1}^{n-1}\, \ZZ_n \mathsf{F}_i$, is a representative of the minimal class $Min_{n(n-1),n}$.
\end{proof}

\begin{figure}
\begin{center}
\begin{tabular}{ccc}
\begin{tikzpicture}
 [scale=.15,auto=left, node distance=1.5cm, every node/.style={circle,draw}]
  \node[fill=white] (n1) at (4,0) {\small{1}};
  \node[fill=white] (n2) at (24,0) {\small{2}};
 \node[fill=white] (n3) at (14,9)  {\small{3}};
 \draw[ arrows={->}, thick]  (n1) to [loop left] (n1);
\draw[ arrows={->}, thick] (n1) edge  [bend left=-5] (n2);
\draw[ arrows={->}, thick]  (n1) edge  [bend left=5]  (n3);
 \end{tikzpicture} & 
 \begin{tikzpicture}
 [scale=.15,auto=left, node distance=1.5cm, every node/.style={circle,draw}]
  \node[fill=white] (n1) at (4,0) {\small{1}};
  \node[fill=white] (n2) at (24,0) {\small{2}};
 \node[fill=white] (n3) at (14,9)  {\small{3}};
  \draw[ arrows={->>}, thick]  (n2) to [loop right] (n2);
\draw[ arrows={->>}, thick]  (n2) edge  [bend left=25] (n1);
\draw[ arrows={->>}, thick] (n2) edge  [bend left=-5] (n3);
 \end{tikzpicture} & 
 \begin{tikzpicture}
 [scale=.15,auto=left, node distance=1.5cm, every node/.style={circle,draw}]
  \node[fill=white] (n1) at (4,0) {\small{1}};
  \node[fill=white] (n2) at (24,0) {\small{2}};
 \node[fill=white] (n3) at (14,9)  {\small{3}};
 \draw[ arrows={->>>}, thick]  (n3) to [loop above] (n3);
 \draw[ arrows={->>>}, thick]   (n3) edge  [bend left=-25] (n1);
 \draw[ arrows={->>>}, thick]  (n3) edge  [bend left=25] (n2);
 \end{tikzpicture}\\
  \ \\
 \begin{tikzpicture}
 [scale=.15,auto=left, node distance=1.5cm, every node/.style={circle,draw}]
  \node[fill=white] (n1) at (4,0) {\small{1}};
  \node[fill=white] (n2) at (24,0) {\small{2}};
 \node[fill=white] (n3) at (14,9)  {\small{3}};
 \draw[ arrows={->}, dashed] (n1) to [loop below] (n1);
\draw[ arrows={->}, dashed]  (n1) edge  [bend left=-45] (n2);
\draw[ arrows={->}, dashed]  (n2) edge  [bend left=-45] (n3);
 \end{tikzpicture} & 
 \begin{tikzpicture}
 [scale=.15,auto=left, node distance=1.5cm, every node/.style={circle,draw}]
 \node[fill=white] (n1) at (4,0) {\small{1}};
  \node[fill=white] (n2) at (24,0) {\small{2}};
 \node[fill=white] (n3) at (14,9)  {\small{3}};
 \draw[ arrows={->>}, dashed]  (n2) to [loop below] (n2);
\draw[ arrows={->>}, dashed]  (n2) edge  [bend left=-65] (n3);
\draw[ arrows={->>}, dashed]  (n3) edge  [bend left=-45] (n1);
 \end{tikzpicture}  & 
 \begin{tikzpicture}
 [scale=.15,auto=left, node distance=1.5cm, every node/.style={circle,draw}]
  \node[fill=white] (n1) at (4,0) {\small{1}};
  \node[fill=white] (n2) at (24,0) {\small{2}};
 \node[fill=white] (n3) at (14,9)  {\small{3}};
\draw[ arrows={->>>}, dashed] (n3) to [loop below] (n3);
\draw[ arrows={->>>}, dashed] (n3) edge  [bend left=-65] (n1);
\draw[ arrows={->>>}, dashed] (n1) edge  [bend left=-65] (n2);
 \end{tikzpicture} 
\end{tabular}
\caption{The networks $C_1, C_2, C_3, D_1, D_3, D_5$ from Table~\ref{tab:listaval13cell} whose union  represent a minimal network in the class $\mbox{Min}_{6,3}$.}
		\label{fig:set_srep_min}
\end{center}
\end{figure}
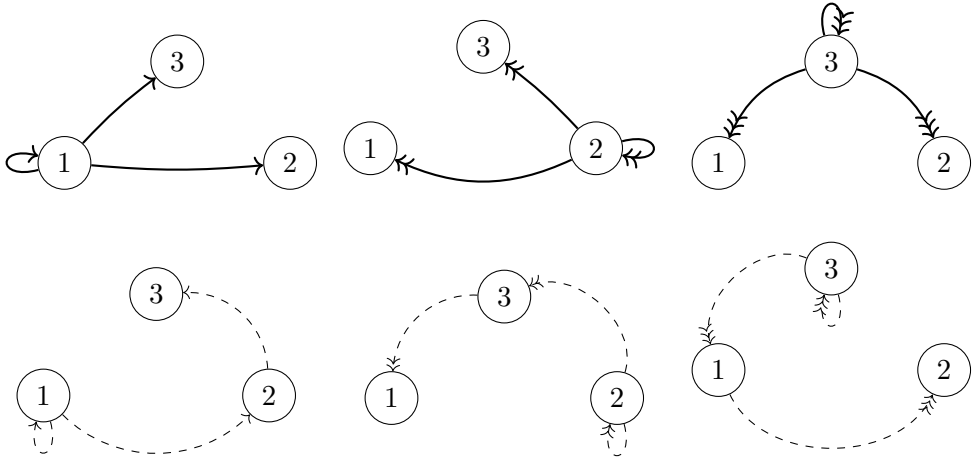

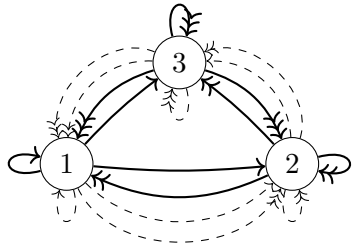
\begin{figure}[ht!]
\begin{center}
\vspace{-4mm}
\hspace{-4mm}
\begin{tikzpicture}
[scale=.15,auto=left, node distance=1.5cm, every node/.style={circle,draw}]
 \node[fill=white] (n1) at (4,0) {\small{1}};
  \node[fill=white] (n2) at (24,0) {\small{2}};
 \node[fill=white] (n3) at (14,9)  {\small{3}};

\draw[ arrows={->}, thick]  (n1) to [loop left] (n1);
\draw[ arrows={->}, thick] (n1) edge  [bend left=-5] (n2);
\draw[ arrows={->}, thick]  (n1) edge  [bend left=5]  (n3);

 \draw[ arrows={->>}, thick]  (n2) to [loop right] (n2);
\draw[ arrows={->>}, thick]  (n2) edge  [bend left=25] (n1);
\draw[ arrows={->>}, thick] (n2) edge  [bend left=-5] (n3);

 \draw[ arrows={->>>}, thick]  (n3) to [loop above] (n3);
 \draw[ arrows={->>>}, thick]   (n3) edge  [bend left=-25] (n1);
 \draw[ arrows={->>>}, thick]  (n3) edge  [bend left=25] (n2);

 \draw[ arrows={->}, dashed] (n1) to [loop below] (n1);
\draw[ arrows={->}, dashed]  (n1) edge  [bend left=-45] (n2);
\draw[ arrows={->}, dashed]  (n2) edge  [bend left=-45] (n3);

\draw[ arrows={->>}, dashed]  (n2) to [in=285,out=255, loop] (n2);
\draw[ arrows={->>}, dashed]  (n2) edge  [bend left=-65] (n3);
\draw[ arrows={->>}, dashed]  (n3) edge  [bend left=-45] (n1);

\draw[ arrows={->>>}, dashed] (n3) to [loop below] (n3);
\draw[ arrows={->>>}, dashed] (n3) edge  [bend left=-65] (n1);
\draw[ arrows={->>>}, dashed] (n1) edge  [bend left=-65] (n2);
\end{tikzpicture}
\caption{A representative of the minimal class $\mbox{Min}_{6,3}$.}
		\label{fig:rep_min}
		\end{center}
\end{figure}

In the next example, we illustrate Theorem~\ref{cor:rep_min} when $n$ is equal to $3$.

\begin{exam}  
Up to permutation of the cells, there are two $3$-cell feed-forward networks with one (asymmetric) input, one having two tails of length one each  and the other having just one tail with length two. See networks $C\equiv \mathsf{F}_1$ and $D \equiv \mathsf{F}_2$, respectively, in Table~\ref{tab:val13cell}. Consider, also, the networks in the sets $\ZZ_3 \mathsf{F}_1 = \{ C_1, C_2, C_3\}$ and $\ZZ_3 \mathsf{F}_2 =\{ D_1, D_3, D_5\}$ , where the networks $C_i$ and $D_j$ appear in Table~\ref{tab:listaval13cell}. 
By Theorem~\ref{cor:rep_min}, the $3$-cell network with $6$ asymmetric inputs given by the union of the networks in $\ZZ_3 \mathsf{F}_1  \cup \ZZ_3 \mathsf{F}_2$ (see Figure~\ref{fig:set_srep_min}) 
is a representative of the minimal class $\mbox{Min}_{6,3}$ (see Figure~\ref{fig:rep_min}). 
\hfill 
$\Diamond$
\end{exam}

\begin{rem}\label{cor:unionmenos}
For an integer $n \ge 2$, consider the $n-1$ feed-forward networks, $\mathsf{F}_1,\,  \mathsf{F}_2, \, \ldots,$ $\mathsf{F}_{n-1}$,  with $n-1, \, n-3,\, n-4,\,  \ldots, 0$ length one tails, respectively, as in Lemma~\ref{lemma:FFN_tails}. We remark that, for each $k \leq n(n-1)$, the 
$
\left(
\begin{array}{c}
n(n-1)\\
k
\end{array}
\right)$
networks with $k$ asymmetric inputs defined by the union of the possible combinations of $k$ networks in the set $\{\ZZ_n \mathsf{F}_i:\ i=1,\ldots, n-1 \}$ are minimal networks representing ODE-classes.
However, they can represent the same ODE-classes.
For example, the minimal networks $\mathsf{F}_1$ and $\pi_n\mathsf{F}_1$ represent the same ODE-class.
Therefore the number of distinct ODE-classes in $\mbox{Min}_{1,n}$ given by those feed-forward networks is $n-1$. 
\hfill 
$\Diamond$
\end{rem}

Nevertheless, we believe that the number of distinct classes in  $\mbox{Min}_{k,n}$ can be lower bounded by the number of $k$-combinations from $n(n-1)$ elements.
In the next section, we prove that this lower bound is valid when $k=1$.

\section{More on n-cell networks with one asymmetric input}\label{sec:lower_min_1_input}

Observe that for $n=3$ and $n=2$,  the number of distinct ODE-classes of the network set $\mbox{Min}_{1,n}$ is equal to $n(n-1)$. 
However, this is not true for $n\geq4$. For example, from the results obtained by \cite{AS05}, we have that, up to permutation of cells, the set $\mbox{Min}_{1,4}$ contains $18$ networks and $n(n-1)=12$ when $n=4$. More generally, from the results of \cite{AS05}, we have that the number of distinct ODE-classes in $\mbox{Min}_{1,n}$ increases quite fast with $n$ and it is bigger than $n(n-1)$ for $n \geq 4$. We present below an algorithm that provides $n(n-1)$ networks belonging to $n(n-1)$ distinct ODE-classes in $\mbox{Min}_{1,n}$ constructed from networks in distinct ODE-classes in $\mbox{Min}_{1,l}$ for $l <n$.

\begin{table}
 \begin{center}
 {\tiny 
 \begin{tabular}{|c|c||c|c|}
\hline 
Three-cell Network &  Four-cell Network & Three-cell Network &  Four-cell Network \\
\hline 
\begin{tikzpicture}
 [scale=.15,auto=left, node distance=1.5cm, every node/.style={circle,draw}]
 \node[fill=white] (n1) at (4,0) {\small{1}};
  \node[fill=white] (n2) at (14,0) {\small{2}};
 \node[fill=white] (n3) at (14,10)  {\small{3}};
 \draw[->, thick] (n1) edge  [bend left=-10] (n2); 
 \draw[->, thick] (n2) edge  [bend left=-10] (n3); 
\draw[->, thick] (n3) edge [bend right=10] (n1); 
\end{tikzpicture} & 
\begin{tikzpicture}
 [scale=.15,auto=left, node distance=1.5cm, every node/.style={circle,draw}]
 \node[fill=white] (n1) at (4,0) {\small{1}};
  \node[fill=white] (n2) at (14,0) {\small{2}};
 \node[fill=white] (n3) at (14,10)  {\small{3}};
 \node[fill=white] (n4) at (4,10)  {\small{4}};
 \draw[->, thick] (n1) edge  [bend left=-10] (n2); 
 \draw[->, thick] (n2) edge  [bend left=-10] (n3); 
\draw[->, thick] (n3) edge [bend right=10] (n4); 
\draw[->, thick] (n4) edge [bend right=10] (n1); 
\end{tikzpicture} & 
\begin{tikzpicture}
 [scale=.15,auto=left, node distance=1.5cm, every node/.style={circle,draw}]
 \node[fill=white] (n1) at (4,0) {\small{1}};
  \node[fill=white] (n2) at (14,0) {\small{2}};
 \node[fill=white] (n3) at (14,10)  {\small{3}};
 
 \draw[->, thick] (n1) to [loop above] (n1); 
 \draw[->, thick] (n2) edge  [bend left=-10] (n3); 
\draw[->, thick] (n3) edge [bend right=10] (n2); 
\end{tikzpicture} & 
\begin{tikzpicture}
 [scale=.15,auto=left, node distance=1.5cm, every node/.style={circle,draw}]
 \node[fill=white] (n1) at (4,0) {\small{1}};
  \node[fill=white] (n2) at (14,0) {\small{2}};
 \node[fill=white] (n3) at (14,10)  {\small{3}};
 \node[fill=white] (n4) at (4,10)  {\small{4}};
 \draw[->, thick] (n1) to [loop above] (n1); 
 \draw[->, thick] (n2) edge  [bend left=-10] (n3); 
\draw[->, thick] (n3) edge [bend right=10] (n4); 
\draw[->, thick] (n4) edge [bend right=10] (n2); 
\end{tikzpicture} 
\\
\ &\ & \ &  \\
\hline 
\ & \ & \ & \\
\begin{tikzpicture}
 [scale=.15,auto=left, node distance=1.5cm, every node/.style={circle,draw}]
 \node[fill=white] (n1) at (4,0) {\small{1}};
  \node[fill=white] (n2) at (14,0) {\small{2}};
 \node[fill=white] (n3) at (14,10)  {\small{3}};
 \draw[->, thick] (n1) to [loop above] (n1); 
 \draw[->, thick] (n1) edge  [bend left=-10] (n2); 
 \draw[->, thick] (n1) edge  [bend left=-10] (n3); 
 \end{tikzpicture} & 
 \begin{tikzpicture}
 [scale=.15,auto=left, node distance=1.5cm, every node/.style={circle,draw}]
 \node[fill=white] (n1) at (4,0) {\small{1}};
  \node[fill=white] (n2) at (14,0) {\small{2}};
 \node[fill=white] (n3) at (14,10)  {\small{3}};
 \node[fill=white] (n4) at (4,10) {\small{4}};
 \draw[->, thick] (n1) to [bend left=-10] (n4); 
 \draw[->, thick] (n4) to [bend right=10] (n1); 
 \draw[->, thick] (n1) edge  [bend left=-10] (n2); 
 \draw[->, thick] (n1) edge  [bend left=-10] (n3); 
 \end{tikzpicture}  &
\begin{tikzpicture}
 [scale=.15,auto=left, node distance=1.5cm, every node/.style={circle,draw}]
 \node[fill=white] (n1) at (4,0) {\small{1}};
  \node[fill=white] (n2) at (14,0) {\small{2}};
 \node[fill=white] (n3) at (14,10)  {\small{3}};
 \draw[->, thick] (n1) to [loop above] (n1); 
 \draw[->, thick] (n1) edge  [bend left=-10] (n2); 
 \draw[->, thick] (n2) edge  [bend left=-10] (n3); 
 \end{tikzpicture} &
 \begin{tikzpicture}
 [scale=.15,auto=left, node distance=1.5cm, every node/.style={circle,draw}]
 \node[fill=white] (n1) at (4,0) {\small{1}};
  \node[fill=white] (n2) at (14,0) {\small{2}};
 \node[fill=white] (n3) at (14,10)  {\small{3}};
 \node[fill=white] (n4) at (4,10) {\small{4}};
 \draw[->, thick] (n1) to [bend left=-10] (n4); 
 \draw[->, thick] (n4) to [bend right=10] (n1); 
 \draw[->, thick] (n1) edge  [bend left=-10] (n2); 
 \draw[->, thick] (n2) edge  [bend left=-10] (n3); 
 \end{tikzpicture} \\
 \ & & & \\
\hline 
\ & & & \\
\begin{tikzpicture}
 [scale=.15,auto=left, node distance=1.5cm, every node/.style={circle,draw}]
 \node[fill=white] (n1) at (4,0) {\small{1}};
  \node[fill=white] (n2) at (14,0) {\small{2}};
 \node[fill=white] (n3) at (14,10)  {\small{3}};
 \draw[->, thick] (n1) to [loop above] (n1); 
 \draw[->, thick] (n2) to [loop above] (n2); 
\draw[->, thick] (n1) edge [bend right=10] (n3); 
\end{tikzpicture} &
\begin{tikzpicture}
 [scale=.15,auto=left, node distance=1.5cm, every node/.style={circle,draw}]
 \node[fill=white] (n1) at (4,0) {\small{1}};
  \node[fill=white] (n2) at (14,0) {\small{2}};
 \node[fill=white] (n3) at (14,10)  {\small{3}};
 \node[fill=white] (n4) at (4,10) {\small{4}};
 \draw[->, thick] (n1) to [bend left=-10] (n4); 
 \draw[->, thick] (n4) to [bend right=10] (n1); 
 \draw[->, thick] (n2) to [loop above] (n2); 
\draw[->, thick] (n1) edge [bend right=10] (n3); 
\end{tikzpicture} &
\begin{tikzpicture}
 [scale=.15,auto=left, node distance=1.5cm, every node/.style={circle,draw}]
 \node[fill=white] (n1) at (4,0) {\small{1}};
  \node[fill=white] (n2) at (14,0) {\small{2}};
 \node[fill=white] (n3) at (14,10)  {\small{3}};
 \draw[->, thick] (n1) edge  [bend left=-10] (n2); 
 \draw[->, thick] (n2) edge  [bend left=-10] (n1); 
\draw[->, thick] (n2) edge [bend right=10] (n3); 
\end{tikzpicture} &
\begin{tikzpicture}
 [scale=.15,auto=left, node distance=1.5cm, every node/.style={circle,draw}]
 \node[fill=white] (n1) at (4,0) {\small{1}};
  \node[fill=white] (n2) at (14,0) {\small{2}};
 \node[fill=white] (n3) at (14,10)  {\small{3}};
  \node[fill=white] (n4) at (4,10) {\small{4}};
 \draw[->, thick] (n1) edge  [bend left=-10] (n2); 
 \draw[->, thick] (n2) edge  [bend right=-10] (n4); 
 \draw[->, thick] (n4) edge  [bend right=-10] (n1); 
\draw[->, thick] (n2) edge [bend right=10] (n3); 
\end{tikzpicture} \\
\ & & &  \\
\hline 
\end{tabular}}
\end{center}
\caption{Six ODE distinct four-cell networks with one (asymmetric)  
input which are minimal build from the six ODE-disctint three-cell minimal networks with one (asymmetric) input.} \label{tab:prova1}
\end{table}

\begin{table}
 \begin{center}
 {\tiny 
 \begin{tabular}{|cc|}
\hline 
\begin{tikzpicture}
 [scale=.15,auto=left, node distance=1.5cm, every node/.style={circle,draw}]
 \node[fill=white] (n1) at (4,0) {\small{1}};
  \node[fill=white] (n2) at (14,0) {\small{2}};
 \node[fill=white] (n3) at (24,0)  {\small{3}};
 \draw[->, thick] (n1) to [loop above] (n1); 
 \draw[->, thick] (n1) edge  [] (n2); 
 \draw[->, thick] (n2) edge  [] (n3); 
 \end{tikzpicture} & 3-cell feed-forward network  \\
 \ &  \\
\hline 
\ &  \\
\begin{tikzpicture}
 [scale=.15,auto=left, node distance=1.5cm, every node/.style={circle,draw}]
 \node[fill=white] (n1) at (4,0) {\small{1}};
  \node[fill=white] (n2) at (14,0) {\small{2}};
 \node[fill=white] (n3) at (14,-10)  {\small{3}};
 \node[fill=white] (n4) at (4,-10)  {\small{4}};
 \draw[->, thick] (n1) to [loop above] (n1); 
 \draw[->, thick] (n1) edge  [] (n2); 
 \draw[->, thick] (n2) edge  [] (n3); 
 \draw[->, thick] (n3) edge  [] (n4); 
 \end{tikzpicture} & 
\begin{tikzpicture}
 [scale=.15,auto=left, node distance=1.5cm, every node/.style={circle,draw}]
 \node[fill=white] (n1) at (4,0) {\small{1}};
  \node[fill=white] (n2) at (14,0) {\small{2}};
 \node[fill=white] (n3) at (14,-10)  {\small{3}};
 \node[fill=white] (n4) at (4,-10)  {\small{4}};
 \draw[->, thick] (n1) to [loop above] (n1); 
 \draw[->, thick] (n1) edge  [] (n2); 
 \draw[->, thick] (n2) edge  [] (n3); 
 \draw[->, thick] (n2) edge  [] (n4); 
\end{tikzpicture} \\
\ &  \\
\begin{tikzpicture}
 [scale=.15,auto=left, node distance=1.5cm, every node/.style={circle,draw}]
 \node[fill=white] (n1) at (4,0) {\small{1}};
  \node[fill=white] (n2) at (14,0) {\small{2}};
 \node[fill=white] (n3) at (14,-10)  {\small{3}};
 \node[fill=white] (n4) at (4,-10)  {\small{4}};
 \draw[->, thick] (n1) to [loop above] (n1); 
 \draw[->, thick] (n1) edge  [] (n2); 
 \draw[->, thick] (n2) edge  [] (n3); 
 \draw[->, thick] (n1) edge  [] (n4); 
\end{tikzpicture} &
\begin{tikzpicture}
 [scale=.15,auto=left, node distance=1.5cm, every node/.style={circle,draw}]
\node[fill=white] (n1) at (4,0) {\small{1}};
  \node[fill=white] (n2) at (14,0) {\small{2}};
 \node[fill=white] (n3) at (14,-10)  {\small{3}};
 \node[fill=white] (n4) at (4,-10)  {\small{4}};
 \draw[->, thick] (n1) to [loop above] (n1); 
  \draw[->, thick] (n4) to [loop above] (n4); 
 \draw[->, thick] (n1) edge  [] (n2); 
 \draw[->, thick] (n2) edge  [] (n3); 
\end{tikzpicture}\\
\ &   \\
\hline 
\hline
 &   \\
\begin{tikzpicture}
 [scale=.15,auto=left, node distance=1.5cm, every node/.style={circle,draw}]
 \node[fill=white] (n1) at (4,0) {\small{1}};
  \node[fill=white] (n2) at (14,0) {\small{2}};
 \draw[->, thick] (n1) to [loop above] (n1); 
 \draw[->, thick] (n1) edge  [] (n2); 
 \end{tikzpicture} & 2-cell feed-forward network \\
 \ &  \\
\hline 
\ &  \\
\begin{tikzpicture}
 [scale=.15,auto=left, node distance=1.5cm, every node/.style={circle,draw}]
 \node[fill=white] (n1) at (4,0) {\small{1}};
  \node[fill=white] (n2) at (14,0) {\small{2}};
   \node[fill=white] (n3) at (4,-10) {\small{3}};
    \node[fill=white] (n4) at (14,-10) {\small{4}};
 \draw[->, thick] (n1) to [loop above] (n1); 
 \draw[->, thick] (n1) edge  [] (n2); 
 \draw[->, thick] (n1) edge  [] (n3); 
 \draw[->, thick] (n1) edge  [] (n4); 
 \end{tikzpicture} & 
 \begin{tikzpicture}
 [scale=.15,auto=left, node distance=1.5cm, every node/.style={circle,draw}]
 \node[fill=white] (n1) at (4,0) {\small{1}};
  \node[fill=white] (n2) at (14,0) {\small{2}};
   \node[fill=white] (n3) at (4,-10) {\small{3}};
    \node[fill=white] (n4) at (14,-10) {\small{4}};
 \draw[->, thick] (n1) to [loop above] (n1); 
 \draw[->, thick] (n1) edge  [] (n2); 
  \draw[->, thick] (n3) to [loop above] (n3); 
   \draw[->, thick] (n4) to [loop above] (n4); 
 \end{tikzpicture} \\
\hline 
\end{tabular}}
\end{center}
\caption{Six ODE distinct four-cell networks with one (asymmetric)  
input which are minimal build from a three-cell and two-cell minimal feed-forward networks with one (asymmetric) input.} \label{tab:prova2}
\end{table}

By explicit computation, we can see that $\mbox{Min}_{1,1}$ has no networks and $\mbox{Min}_{1,2}$ has two ODE distinct networks. Also, from Table~\ref{tab:val13cell}, we know that $\mbox{Min}_{1,3}$ has six distinct ODE-classes of networks. We describe  now explicitly some of the distinct ODE-classes of $\mbox{Min}_{1,n}$, for $n >3$. 

\begin{alg} \label{alg1} \  \\
\normalfont 
Input: A representative network $G$ of an  ODE-class in  $\mbox{Min}_{1,n-1}$ with adjacency matrix $A_G$, where $n > 3$ is an integer. 
Let $k$ be the number of cells of the largest cycle of the network.\\
Output: a representative network $\widetilde{G}$ with adjacency matrix $\tilde{A}_{\tilde{G}}$ of an  ODE-class in  $\mbox{Min}_{1,n}$ where $k+1$ is  the number of cells of the largest cycle of the network. \\
\begin{itemize}
\item[(i)] Choose a representative network $G \in \mbox{Min}_{1,n-1}$ and consider its adjacency matrix $A_G$ . Let $k$ be the number of cells of the largest cycle of the network. 
\item[(ii)] Re-enumerate the cells if necessary so that the matrix $A_G$  has the form:
$$A_G=\left[
\begin{matrix}
C_k & 0\\ 
B & D
\end{matrix} 
\right],$$
where $C_k$ is the adjacency matrix of the cycle $1 \rightarrow 2 \rightarrow\dots\rightarrow k\rightarrow 1$, $B$ is a $(n-1-k)\times k$ matrix and $D$ is a $(n-1-k)\times(n-1-k)$ matrix. 
\item[(iii)]
Take the network with $n$ cells by the following adjacency matrix
$$\tilde{A}_{\tilde{G}}=\left[
\begin{matrix}
C_{k+1} & 0\\
 \begin{matrix}
  B& 0
\end{matrix}  & D
\end{matrix} 
\right],$$
where $0$ is a column of zeros.
\item[(iv)] Output the network with adjacency matrix $\tilde{A}_{\tilde{G}}$. 
\end{itemize}
\hfill 
$\Diamond$
\end{alg}

\begin{prop}\label{prop_alg1} 
Algorithm~\ref{alg1} applied to a set of representatives of the distinct ODE-classes in $\mbox{Min}_{1,n-1}$, where $n>3$ is an integer,  provides a set of ODE-distinct networks in  $\mbox{Min}_{1,n}$.  
\end{prop}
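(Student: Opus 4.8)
The plan is to show two things for the networks produced by Algorithm~\ref{alg1}: first, each output network $\widetilde G$ genuinely lies in $\mbox{Min}_{1,n}$ (i.e., it is a well-defined $n$-cell network with one asymmetric input, and it is minimal); second, distinct inputs yield outputs in distinct ODE-classes. Minimality is essentially free: by the discussion after Proposition~\ref{prop:min}, any $n$-cell network with one asymmetric input whose adjacency matrix is not $\mbox{Id}_n$ is automatically minimal, since $\mbox{Id}_n$ and $A$ are then linearly independent. So I would first check that $\tilde A_{\tilde G}$ is a valency-one $0/1$ matrix different from $\mbox{Id}_n$. Valency one holds because $C_{k+1}$ has exactly one $1$ per row and the block $\begin{bmatrix} B & 0\end{bmatrix}$ has the same row sums as $B$, which are the rows of $A_G$ corresponding to cells outside the largest cycle (each row summing to $1$). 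Since $k\ge 1$ the cycle $C_{k+1}$ contributes at least one genuine off-diagonal entry, so $\tilde A_{\tilde G}\ne \mbox{Id}_n$; hence $\widetilde G$ is minimal. I would also record that the construction in step~(ii) is always possible: every $n{-}1$-cell one-input network has a largest cycle, and re-enumerating cells so that this cycle occupies positions $1,\dots,k$ and all edges into cycle-cells come only from within the cycle is legitimate precisely because in a one-input network each cell receives a single edge, so the cells of a cycle receive their unique input from within the cycle.

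The second and main part is showing the outputs land in distinct ODE-classes. By Proposition~\ref{prop:ODE_min_one}, two minimal one-input $n$-cell networks are ODE-equivalent if and only if their adjacency matrices are conjugate by a permutation matrix, i.e.\ the associated self-maps of $\{1,\dots,n\}$ are conjugate as functions (same ``functional graph'' up to isomorphism). So I must argue: if $G, G'\in\mbox{Min}_{1,n-1}$ are in distinct ODE-classes, then $\widetilde G,\widetilde G'$ are non-isomorphic as one-input networks. The key structural observation is that Algorithm~\ref{alg1} reconstructs $G$ from $\widetilde G$ in a canonical, isomorphism-invariant way: the largest cycle of $\widetilde G$ has length $k+1$ (this needs a small argument — see below), and deleting one specified cycle-vertex and re-routing gives back $G$. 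Concretely, in the functional-graph language, $\widetilde G$'s map $\tilde f$ has a unique largest cyclic orbit $\{1,\dots,k+1\}$ arranged as a $(k{+}1)$-cycle, and the ``trees hanging off'' the rest of the graph (encoded by $B$ and $D$) are untouched; contracting the cycle from length $k+1$ back to $k$ — which amounts to identifying the new vertex $k+1$'s role and shortening the cycle — recovers the functional graph of $G$. Since this recovery operation is canonical, an isomorphism $\widetilde G\cong\widetilde G'$ would restrict to an isomorphism $G\cong G'$, contradicting that $[G]\ne[G']$.

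The step I expect to be the main obstacle is verifying that the largest cycle of $\widetilde G$ has length exactly $k+1$ and not longer — equivalently, that lengthening one cycle from $k$ to $k+1$ cannot accidentally create or reveal a still-longer cycle, and that the map ``append a vertex to the largest cycle'' is well-defined independent of which largest cycle and which enumeration was chosen in step~(ii). The first point follows because the non-cycle part of a functional graph is a forest of trees rooted on cyclic points, so no new cycles are introduced by step~(iii); the cycles of $\tilde f$ are exactly the cycles of $f$ with the distinguished one elongated by $1$, so if $C_k$ was the strictly largest cycle of $G$ then $C_{k+1}$ is the strictly largest cycle of $\widetilde G$, while if $G$ had several cycles of the common maximal length $k$, then $\widetilde G$ has a \emph{unique} longest cycle of length $k+1$, making the reconstruction canonical in that case too. (When $G$ has several maximum cycles of length $k$, different choices in step~(ii) give possibly different outputs, but that only means the algorithm is multivalued; each output is still minimal and the ODE-class-distinctness argument for outputs coming from \emph{distinct} ODE-classes of inputs goes through as above.) Once these bookkeeping facts about functional graphs are in place, the proposition follows by combining minimality (from Proposition~\ref{prop:min}) with the conjugacy criterion (Proposition~\ref{prop:ODE_min_one}) and the canonicity of the reconstruction.
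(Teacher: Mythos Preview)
Your overall strategy---reduce to the conjugacy criterion of Proposition~\ref{prop:ODE_min_one} and argue via functional-graph isomorphism---is sound, and your verification that each output lies in $\mbox{Min}_{1,n}$ is fine. The gap is in the ``canonical reconstruction'' step. You claim that shortening the unique longest cycle of $\widetilde G$ from length $k{+}1$ back to $k$ recovers $G$ in an isomorphism-invariant way. But the inserted vertex is distinguished in $\widetilde G$ only by having no tree attached (its column in $[B\,|\,0]$ is zero); if some cycle vertex of $G$ already carried an empty tree, then $\widetilde G$ has several such cycle vertices, and an isomorphism $\widetilde G\cong\widetilde G'$ may rotate the $(k{+}1)$-cycle so that the ``new'' vertex of one network lines up with an ``old'' empty-tree vertex of the other. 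Deleting the new vertex on each side then produces non-isomorphic $G,G'$.

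This gap is not repairable: the statement itself fails. Take $n=7$ and two $6$-cell one-input networks $G,G'$, each with unique largest cycle $1\to2\to3\to1$ and three further cells attached by
\[
G:\ \sigma(4)=1,\ \sigma(5)=3,\ \sigma(6)=5;\qquad
G':\ \sigma(4)=1,\ \sigma(5)=4,\ \sigma(6)=3.
\]
The trees hanging off cycle vertices $1,2,3$ form the cyclic sequences $(\bullet,\,\emptyset,\,\bullet\!-\!\bullet)$ for $G$ and $(\bullet\!-\!\bullet,\,\emptyset,\,\bullet)$ for $G'$; these length-$3$ sequences are cyclically inequivalent, so $G\not\cong G'$ and $[G]\ne[G']$ by Proposition~\ref{prop:ODE_min_one}. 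Algorithm~\ref{alg1} appends an empty tree at a new fourth cycle vertex, producing length-$4$ sequences $(\bullet,\,\emptyset,\,\bullet\!-\!\bullet,\,\emptyset)$ and $(\bullet\!-\!\bullet,\,\emptyset,\,\bullet,\,\emptyset)$, which \emph{are} cyclically equivalent (rotate by $2$). Hence $\widetilde G\cong\widetilde G'$; an explicit isomorphism of the $7$-cell outputs is $\pi=(1\;3)(2\;4)(5\;6\;7)$. The paper's proof aims at the same implication by building, from a conjugacy $\tilde A_1 P=P\tilde A_2$ with cycle block $P_1=C_{k+1}^l$, a conjugacy $A_1\hat P=\hat P A_2$ with cycle block $C_k^l$; its final asserted identity $P_2B_2=B_1C_k^l$ fails in this example (here $l=2$, and the two sides differ), for exactly the same underlying reason.
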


\begin{proof}
We follow the notation of Algorithm~\ref{alg1}. Take two graphs $G_1$ and $G_2$ in $\mbox{Min}_{1,n-1}$ with adjacency matrices $A_1$ and $A_2$ and 
consider the two networks  in $\mbox{Min}_{1,n}$  obtained as output in Algorithm~\ref{alg1} with adjacency matrices $\tilde{A}_1$ and $\tilde{A}_2$. 
We need to check that if $\tilde{A}_1$ and $\tilde{A}_2$ define ODE-equivalent networks then $A_1$ and $A_2$ define ODE-equivalent networks.
Suppose that $\tilde{A}_1$ and $\tilde{A}_2$ define ODE-equivalent networks, i.e., there exists a permutation network $P$ such that $\tilde{A}_1 P = P \tilde{A}_2$.
Note that the largest cycle of both networks is unique and it must have the same dimension, say $k+1$. Then the permutation $P$ must permute cells in the largest cycle with cells in the largest cycle and has the following form:
$$P=\left[
\begin{matrix}
P_1 & 0\\ 
0& P_2
\end{matrix} 
\right],$$
where $P_1$ is a $(k+1)\times(k+1)$ matrix and $P_2$ is a $(n-1-k)\times(n-1-k)$ matrix. 
If $\tilde{A}_1 P = P \tilde{A}_2$, then $C_{k+1}P_1=P_1C_{k+1}$. 
By Theorem 3.1.1 of \cite{Davis}, we know that $P_1=C_{k+1}^l$ for some integer $0 \leq l\leq k$.  

Let $\hat{P}$ be the permutation matrix given by
$$\hat{P}=\left[
\begin{matrix}
C_k^l & 0\\ 
0& P_2
\end{matrix} 
\right].$$

Next, we check that $\tilde{A}_1 P = P \tilde{A}_2$ implies that $A_1 \hat{P} = \hat{P} A_2$.
It is clear that $C_k C_k^l= C_k^l C_k$ and $D_1 P_2=P_2 D_2$. We need to see that $B_1 C_k^l= P_2 B_2$.
It follows from $\tilde{A}_1 P = P \tilde{A}_2$ that 
$$([B_1|0])_{i (j-l)_{k+1}}= \sum_{a=1}^{n-k} (P_2)_{i a} (B_2)_{a j}= (P_2 B_2)_{ij}, \quad\quad j<k+1$$
$$([B_1|0])_{i (k+1-l)}=0,$$
where $0$ is a column of zeros, $(j-l)_{k+1}$ is $j-l$ module $k+1$  and $1\leq i\leq n-k$.
Then $[B_1|0]=[X|0|Y|0]$ and $[P_2B_2|0]=P_2[B_2|0]=[Y|0|X|0] $, where $X$ is a $(n-k)\times (k-l)$ matrix and $Y$ is a $(n-k)\times (l-1)$ matrix.
Thus $P_2 B_2=[Y|0|X]=B_1 C_k^l$.
\end{proof}

\begin{exam}
Table~\ref{tab:prova1} illustrates the application of Algorithm~\ref{alg1} to a set of ODE-distinct networks in $\mbox{Min}_{1,3}$  (taken from Table~\ref{tab:val13cell})  by increasing for each network the largest cycle by one cell, obtaining six ODE-distinct networks in $\mbox{Min}_{1,4}$. 
\hfill 
$\Diamond$
\end{exam}

\begin{alg} \label{alg2}\normalfont \  \\
Input: The feed-forward networks $\mathsf{F}_1$,  with $n-1$ cells (and $n-1$ layers), and $\mathsf{F}_2$, with $n-2$ cells (and $n-2$ layers), respectively, where $n >3$ is a positive integer\\
Output: $2(n-1)$ ODE-distinct feed-forward networks in $\mbox{Min}_{1,n}$.
\begin{itemize}
\item[(i)] Taking the feed-forward network $\mathsf{F}_1$ with $n-1$ cells, we can join one cell in $n-1$ different ways or we can leave it separated from $\mathsf{F}_1$, obtaining $n$  feed-forward networks with $n$ cells.
 \item[(ii)]Taking the feed-forward network $\mathsf{F}_2$ with $n-2$ cells, we can join the last two cells, since $n>3$, to the same cell (except to the last one) 
in $\mathsf{F}_2$, or we can leave them  separated from $\mathsf{F}_2$ in $n-2$ different ways.
\end{itemize}
\hfill 
$\Diamond$ 
\end{alg}

In Algorithm~\ref{alg2},  we provide $2(n-1)$ feed-forward networks in $\mbox{Min}_{1,n}$, where $n>3$ is a positive integer. Trivially, we have that: 

\begin{prop} \label{prop_alg2}
The $2(n-1)$ feed-forward networks outputted from Algorithm~\ref{alg2}, where $n>3$ is a positive integer, are  ODE-distinct. 
\end{prop}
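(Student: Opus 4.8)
The plan is to reduce the statement to a pure graph-isomorphism question and then to separate the $2(n-1)$ networks using a few elementary invariants. First, I would observe that every network produced by Algorithm~\ref{alg2} has one asymmetric input with adjacency matrix different from $\mathrm{Id}_n$, hence is minimal by Proposition~\ref{prop:min}; moreover, by Proposition~\ref{prop:ODE_min_one}, two minimal $n$-cell networks with one asymmetric input belong to the same ODE-class if and only if they coincide up to a permutation of the cells, i.e.\ if and only if they are isomorphic as directed graphs. So it suffices to show that the $2(n-1)$ feed-forward networks output by Algorithm~\ref{alg2} are pairwise non-isomorphic.

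Second, I would record three invariants of a feed-forward network $G$ with one asymmetric input, each obviously preserved by relabelling of cells: the number $s(G)$ of cells carrying a self-loop (equivalently, the number of connected components, since each component of a feed-forward network with one asymmetric input is a rooted tree whose root is its unique self-looped cell); the largest number $\Delta(G)$ of outgoing edges at a single cell, excluding self-loops; and the multiset of \emph{depths} of the cells, the depth of a cell being the length of the directed path from the root of its component to that cell.

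Third, I would simply compute these invariants. Among the $n$ networks of step (i), built on the $(n-1)$-cell chain $\mathsf{F}_1$: attaching the new cell to cell $j$ of $\mathsf{F}_1$ with $1\le j\le n-2$ yields a connected network with $s=1$, $\Delta=2$ and depth multiset $\{0,1,\dots,n-2\}$ together with one additional copy of $j$; attaching it to the last cell of $\mathsf{F}_1$ yields the $n$-cell chain, with $s=1$, $\Delta=1$; and leaving it separated yields a network with $s=2$. Among the $n-2$ networks of step (ii), built on the $(n-2)$-cell chain $\mathsf{F}_2$: attaching the two new cells to cell $j$ of $\mathsf{F}_2$ with $1\le j\le n-3$ yields a connected network with $s=1$, $\Delta=3$ and depth multiset $\{0,1,\dots,n-3\}$ together with two additional copies of $j$; and leaving the two cells separated yields a network with $s=3$. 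The pair $(s(G),\Delta(G))$ therefore puts the $n$-cell chain, the connected step-(i) networks, the connected step-(ii) networks, and each of the ``separated'' networks into mutually disjoint groups; and within the family of connected step-(i) networks (respectively step-(ii) networks) the value $j$ can be read off from the depth multiset, so distinct $j$ give non-isomorphic networks. Hence all $2(n-1)$ networks lie in distinct ODE-classes.

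The computations in the third step are immediate precisely because $\mathsf{F}_1$ and $\mathsf{F}_2$ are chains: attaching new leaves at cell $j$ creates a single cell of out-degree at least two, sitting at a distance from the root that is forced by $j$. The only point that warrants a line of checking — and which I expect to be the only mild obstacle — is the bookkeeping for the ``separated'' outputs of Algorithm~\ref{alg2}: one must confirm that each of them introduces strictly more self-looped cells than any connected network and than the other separated network, so that no spurious isomorphism arises; but this is read off directly from the number of isolated self-looped cells created.
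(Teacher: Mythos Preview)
Your argument is correct. The paper itself provides no proof at all: the proposition is introduced with the phrase ``Trivially, we have that:'' and no further justification is given. Your proof therefore does not conflict with the paper's approach---it supplies the details the authors omitted. The reduction to graph isomorphism via Proposition~\ref{prop:ODE_min_one} is exactly the right first step, and the three invariants you chose (number of self-loops $s$, maximal non-loop out-degree $\Delta$, depth multiset) cleanly separate the five groups $(s,\Delta)\in\{(1,1),(1,2),(1,3),(2,1),(3,1)\}$ and then the individual attachment points $j$ inside the $(1,2)$ and $(1,3)$ groups. Your bookkeeping for the ``separated'' outputs is also correct: step~(i) separated has $s=2$, step~(ii) separated has $s=3$, and all connected outputs have $s=1$. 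Nothing is missing.
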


\begin{exam}
Table~\ref{tab:prova2} illustrates the construction given in Algorithm~\ref{alg2} for the case $n=4$ providing  six ODE distinct $4$-cell networks in 
$\mbox{Min}_{1,4}$ which are feed-forward. 
\hfill
$\Diamond$
\end{exam}

\begin{thm}\label{thm:min_ncell_1input} 
Let $n$ be a positive integer. The difference between the number of distinct ODE-classes in $\mbox{Min}_{1,n}$ and in $\mbox{Min}_{1,n-1}$ is at least $2(n-1)$. 
Furthermore, the number of distinct ODE-classes in $\mbox{Min}_{1,n}$ is at least $n(n-1)$.
\end{thm}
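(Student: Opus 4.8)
The plan is to prove the two claims by induction on $n$, using the two algorithms as the inductive engine. For the first claim, observe that Algorithm~\ref{alg1} and Algorithm~\ref{alg2} together produce, from the data at level $n-1$, many networks in $\mbox{Min}_{1,n}$; I want to show that collectively they yield at least $2(n-1)$ ODE-classes in $\mbox{Min}_{1,n}$ that are \emph{not} in the image of any class at level $n-1$ under the natural "do nothing new" correspondences, so that the count strictly increases by at least $2(n-1)$. Concretely, I would first handle $n=2,3,4$ by the explicit computations already recorded in the excerpt ($\mbox{Min}_{1,1}=\emptyset$, $\mbox{Min}_{1,2}$ has $2$ classes, $\mbox{Min}_{1,3}$ has $6$ classes, and Tables~\ref{tab:prova1}--\ref{tab:prova2} exhibit six ODE-distinct networks in $\mbox{Min}_{1,4}$). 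For the inductive step, I take a set of representatives of the $N_{n-1}$ distinct ODE-classes in $\mbox{Min}_{1,n-1}$; applying Algorithm~\ref{alg1} to all of them gives $N_{n-1}$ ODE-distinct networks in $\mbox{Min}_{1,n}$ by Proposition~\ref{prop_alg1}. The crux is then to exhibit $2(n-1)$ \emph{further} ODE-distinct classes in $\mbox{Min}_{1,n}$ disjoint from those $N_{n-1}$: the $2(n-1)$ feed-forward networks produced by Algorithm~\ref{alg2}, which are pairwise ODE-distinct by Proposition~\ref{prop_alg2}, are the natural candidates.

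The key step — and the main obstacle — is disjointness: I must argue that the $2(n-1)$ feed-forward networks from Algorithm~\ref{alg2} are \emph{not} ODE-equivalent to any of the $N_{n-1}$ networks obtained via Algorithm~\ref{alg1}. Algorithm~\ref{alg1} is designed so that the largest cycle of the output network has length $k+1 \geq 2$ strictly larger than $1$ (it enlarges the unique largest cycle of the chosen $(n-1)$-cell representative, and since $\mbox{Min}_{1,n-1}$ for $n-1 \geq 2$ contains networks whose representatives we can always take to have a cycle of length $\geq 2$ after the re-enumeration in step (ii), or at worst we track this invariant carefully). By contrast, the networks from Algorithm~\ref{alg2} are feed-forward, hence acyclic except for a single self-loop; their largest cycle has length $1$. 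Since cycle structure (in particular, the length of the largest cycle of the unique underlying functional graph, which by Proposition~\ref{prop:ODE_min_one} is a permutation-conjugacy invariant, hence an ODE-invariant for networks with one asymmetric input) distinguishes the two families, no network from Algorithm~\ref{alg2} lies in the same ODE-class as one from Algorithm~\ref{alg1}. I need to be a little careful when $n-1 = 2$ or $3$, checking the base cases directly against Tables~\ref{tab:val13cell} and~\ref{tab:prova1}--\ref{tab:prova2}; this is where most of the bookkeeping lives. Hence the number $N_n$ of ODE-classes in $\mbox{Min}_{1,n}$ satisfies $N_n \geq N_{n-1} + 2(n-1)$, which is the first claim.

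For the second claim, I unwind the recursion $N_n \geq N_{n-1} + 2(n-1)$. With base value $N_2 = 2$ (from the explicit computation in the excerpt), telescoping gives
$$
N_n \;\geq\; N_2 + \sum_{j=3}^{n} 2(j-1) \;=\; 2 + 2\sum_{j=3}^{n}(j-1) \;=\; 2 + 2\!\left(\binom{n}{2} - 1\right) \;=\; n(n-1),
$$
since $\sum_{j=3}^{n}(j-1) = \sum_{i=2}^{n-1} i = \binom{n}{2} - 1$. For $n=1$ the statement $N_1 \geq 0$ is trivial and for $n=2$ we have $N_2 = 2 = 2\cdot 1$ with equality. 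This establishes $N_n \geq n(n-1)$ for all positive integers $n$, completing the proof.

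I expect the only genuinely delicate point to be verifying that the two algorithmic families are disjoint and that each is internally ODE-distinct in a way that survives the passage from level $n-1$ to level $n$ — in other words, making precise and airtight the invariant "length of the largest cycle" and confirming it is indeed an ODE-invariant via Proposition~\ref{prop:ODE_min_one}. Everything else (the base cases, the arithmetic of the telescoping sum) is routine.
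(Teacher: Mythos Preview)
Your proposal is correct and follows essentially the same route as the paper: use Proposition~\ref{prop_alg1} to inject the $N_{n-1}$ classes from $\mbox{Min}_{1,n-1}$ into $\mbox{Min}_{1,n}$ via Algorithm~\ref{alg1}, add the $2(n-1)$ feed-forward classes from Algorithm~\ref{alg2} via Proposition~\ref{prop_alg2}, argue disjointness through the length of the largest cycle (which is a conjugacy invariant by Proposition~\ref{prop:ODE_min_one}), and then telescope. One small cleanup: your parenthetical worry about whether the input to Algorithm~\ref{alg1} has a cycle of length $\geq 2$ is unnecessary, since every functional graph on $n-1\geq 1$ cells has at least a self-loop, giving $k\geq 1$ and hence output cycle length $k+1\geq 2$; that alone suffices to separate the two families.
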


\begin{proof}
Recall that $\mbox{Min}_{1,1}$ has no networks, $\mbox{Min}_{1,2}$ has two ODE distinct networks, and that from Table~\ref{tab:val13cell}, we see that $\mbox{Min}_{1,3}$ has six distinct ODE-classes of networks. So both assertions are true for $n\leq 3$. 
Assume now that $n >3$. From Algorithm~\ref{alg1} and Proposition~\ref{prop_alg1}, we obtain ODE-distinct networks in $\mbox{Min}_{1,n}$ from ODE-distinct networks in $\mbox{Min}_{1,n-1}$.  From Algorithm~\ref{alg2} and Proposition~\ref{prop_alg2}, we obtain $2(n-1)$ ODE-distinct feed-forward networks in $\mbox{Min}_{1,n}$.  As the feed-forward networks are not ODE-equivalent to those networks obtained from extending the largest cycle because the largest cycle of the networks has different dimension, we have proved that the difference of the number of distinct ODE-classes between $\mbox{Min}_{1,n}$ and $\mbox{Min}_{1,n-1}$ is at least $2(n-1)$ for $n>1$.\\

By induction, we assume that the number of distinct ODE-classes in $\mbox{Min}_{1,n-1}$ is greater than $(n-1)(n-2)$ and using the previous claim we see that the number of distinct ODE-classes in $\mbox{Min}_{1,n}$ is greater than $n(n-1)$. Thus, the second part of the theorem follows.
\end{proof}

\section{Final conclusions}\label{sec:concl}

In this work, we have proved that the set $\mbox{Min}_{k,n}$ is empty for $k>n(n-1)$ and that there is a unique ODE-class in $\mbox{Min}_{n(n-1),n}$.
Note that the minimal representative of the unique ODE-class in $\mbox{Min}_{n(n-1),n}$ obtained in Theorem~\ref{cor:rep_min} is given by the union of $n(n-1)$ networks in $\mbox{Min}_{1,n}$ from solely $(n-1)$ distinct ODE-classes.
Nevertheless, as we have illustrated for the case of networks with $3$ and $4$ cells, it is natural to expect that there exists a minimal representative of $\mbox{Min}_{n(n-1),n}$ such that each asymmetric input corresponds to a different ODE-class in $\mbox{Min}_{1,n}$.

Moreover, we conjecture that the union of every subset of $k$ such networks, with $k<n(n-1)$, will correspond to a minimal representative of a distinct ODE-class for the networks with $k$ asymmetric inputs. Therefore, we conjecture that the binomial coefficient, below, is a lower bound for the number of distinct ODE-classes in $\mbox{Min}_{k,n}$
$$
\left(
\begin{array}{c}
n(n-1)\\
k
\end{array}
\right) = \frac{n(n-1) !}{k! \left(n(n-1)-k\right)!},\quad\quad k<n(n-1).
$$

Nevertheless, we describe two algorithms to construct at least $n(n-1)$ distinct ODE-classes with one asymmetric input and $n$ cells.
Therefore, the conjecture above holds for $k=1$. It also holds trivially for $k=0$ and $k=n(n-1)$.
We believe that the algorithms presented here can be generalized for bigger numbers of asymmetric inputs.
Since these algorithms use the minimal representative networks with less cells, we hope that those generalized algorithms lead to a proof of the conjecture for $k\leq n(n-1)/2$. 
For the values of $k$ on the second half, we conjecture that the number of ODE-classes is symmetric.
Specifically, we conjecture that the number of ODE-classes in $\mbox{Min}_{k,n}$ is equal to the number of ODE-classes in $\mbox{Min}_{n(n-1)-k,n}$.
The orthogonality of subspaces in the space generated by all adjacency matrix can lead to a proof of this conjecture. 
Note that the binomial coefficient is strictly lower than the number of ODE-classes in the cases $(n,k)=(3,2)$ and $(n,k)=(4,1)$.

\section{Acknowledgments}
 The authors were partially supported by CMUP (UID/MAT/00144/2019), which is funded by FCT with national (MCTES) and European structural funds through the programs FEDER, under the partnership agreement PT2020. The third author, PS, was supported by Grant BEETHOVEN2 of the National Science Centre, Poland, no. 2016/23/G/ST1/04081.


\begin{thebibliography}{99}

\bibitem{AF10a}
N.~Agarwal and M.~ Field. 
Dynamical equivalence of networks of coupled dynamical systems: I. Asymmetric inputs, 
{\it Nonlinearity} {\bf 23} (2010) (6) 1245--1268.

\bibitem{AF10b}
N.~Agarwal and M.~ Field. 
Dynamical equivalence of networks of coupled dynamical systems: II. General case, 
{\it Nonlinearity} {\bf 23} (2010) (6) 1269--1289. 


\bibitem{A18} M.~Aguiar. 
Networks with asymmetric inputs: lattice of synchrony subspaces, 
{\it Nonlinearity} {\bf 31} (2018) (8) 3847--3879.

\bibitem{AADF11}
 M.~Aguiar, P.~Ashwin, A.~Dias and M.~ Field. 
 Dynamics of coupled cell networks: synchrony, heteroclinic cycles and inflation,
 {\it J. Nonlinear Sci.} {\bf 21} (2011) (2) 271--323. 
 
 \bibitem{ADF17}
M.A.D.~Aguiar, A.P.S.~Dias and F.~ Ferreira. 
 Patterns of synchrony for feed-forward and auto-regulation feed-forward neural networks, 
 {\it Chaos} {\bf 27} (2017) (1) 013103.  
  
\bibitem{ADF19}
 M.A.D.~Aguiar, A.~ Dias and M.~Field. 
 Feedforward networks: adaptation, feedback, and synchrony, 
 {\it J. Nonlinear Sci.} {\bf 29} (2019) (3) 1129--1164.
  
  
 \bibitem{ADS19b}
 M.A.D.~Aguiar, A.P.S.~Dias, and P.~Soares.
 Characterization of fundamental networks, 
 {\it Proc. Roy. Soc. Edinburgh Sect. A} (2019) DOI: 10.1017/prm.2018.71
 
\bibitem{AD07}
M.A.D.~Aguiar and A.P.S.~Dias.
Minimal Coupled Cell Networks, {\it Nonlinearity}
{\bf 20} (2007) (1) 193--219.

\bibitem{ADS19}
 M.~Aguiar, A.~Dias and P.~Soares. 
 The steady-state lifting bifurcation problem associated with the valency on networks, 
 {\it Phys. D} {\bf 390} (2019) 36--46. 
 
 \bibitem{ADS19b}
 M.~Aguiar, A.~Dias and P.~Soares. 
 Steady-state bifurcations in three-cell networks with asymmetric inputs. {\it In preparation} (2019). 
 
\bibitem{AS05}
F.~Aldosray and I.~Stewart.
Enumeration of  coupled cell networks,
{\it Int. J. Bifurcation Chaos Appl. Sci. Eng.} {\bf 15} (2005) (8) 2361--2373.

\bibitem{C09}
S. Chotchaisthit. 
Simple proof of determining all nonisomorphic monoids of order 3, 
{\it KKU Sci. J.} {\bf 37} (2009) 184--187.

\bibitem{Davis}
P.J.~Davis. 
{\it Circulant Matrices}. 
A Wiley-Interscience Publication. Pure and Applied Mathematics. John Wiley $\&$ Sons, 
New York-Chichester-Brisbane, 1979. 

\bibitem{DS05} A.P.S.~Dias and I.~Stewart. 
Linear Equivalence and ODE-equivalence for Coupled Cell Networks,  
{\it Nonlinearity} {\bf 18} (2005) (3) 1003--1020.

\bibitem{F04}
M.~Field. Combinatorial dynamics, 
{\it Dynamical Systems} {\bf 19} (2004) (3) 217--243.

\bibitem{GST05}
M.~Golubitsky, I.~Stewart and A.~T\"{o}r\"{o}k.  Patterns 
of Synchrony in Coupled Cell Networks with Multiple Arrows,   
{\it SIAM J. Appl. Dynam. Sys.}  {\bf 4} (2005)  (1) 78--100. 

\bibitem{LG06}
M.C.A.~Leite and M.~Golubitsky.  three-cell networks, 
{\it Nonlinearity} {\bf 19} (2006) (10) 2313--2363. 

\bibitem{MSIKCA02}
R.~Milo, S.~Shen-Orr, S.~Itzkovitz, N.~Kashtan, D.~Chklovskii and U.~Alon.
Network Motifs: Simple Building Blocks of Complex Networks,
{\it Science} {\bf 298} (2002) (5594) 824--827.

\bibitem{NRS16}
 E.~Nijholt, B.~Rink and J.~Sanders. 
 Graph fibrations and symmetries of network dynamics, 
 {\it J. Differential Equations} {\bf 261} (2016) (9) 4861--4896.
 
 \bibitem{NRS17a}
 E.~Nijholt, B.~Rink and J.~Sanders. 
 Projection blocks in  coupled cell networks, 
 {\it Dyn. Syst.} {\bf 32} (2017) (1) 164--186.
 
\bibitem{NRS17b}
E.~Nijholt, B.~Rink and J.~Sanders. 
 Center manifolds of coupled cell networks, 
 {\it SIAM J. Math. Anal.}  {\bf 49} (2017) (5) 4117--4148. 
 
 \bibitem{NRS19}
 E.~Nijholt, B.~Rink and J.~Sanders. 
  Center manifolds of coupled cell networks, 
  {\it SIAM Rev.} {\bf 61} (2019) (1) 121--155. 
  
  \bibitem{NRSS19}
 E.~Nijholt, B.~Rink and J.~Schwenker. 
 Structural properties and bifurcations of generalized feedforward networks, 
 {\it Preprint} 2019. 
  
 \bibitem{RS13}
B.~Rink and J. Sanders. 
Amplified Hopf bifurcations in feed-forward networks, 
{\it SIAM J. Appl. Dyn. Syst.} {\bf 12} (2013) (2) 1135--1157. 
 
\bibitem{RS14}
B.~Rink and J. Sanders. 
Coupled cell networks and their hidden symmetries, 
{\it SIAM J. Math. Anal.} {\bf 46} (2014) (2) 1577--1609. 

\bibitem{RS15}
B.~Rink and J. Sanders. 
 Coupled cell networks: semigroups, Lie algebras and normal forms, 
 {\it Trans. Amer. Math. Soc.} {\bf 367} (2015) (5) 3509--3548. 
 
 \bibitem{SS19}
 S.~Schwenker. 
 {\it Genericity in Network Dynamics}, PhD Thesis, Hamburg, 2019. 
 
\bibitem{S18} 
P.~Soares. 
The lifting bifurcation problem on feed-forward networks, 
{\it Nonlinearity} {\bf 31} (2018) (12)  5500--5535.
 
\bibitem{SGP03}
I.~Stewart, M.~Golubitsky and  M.~Pivato.
Symmetry groupoids and patterns of synchrony in coupled cell networks,
{\it SIAM J. Appl. Dynam. Sys.} {\bf 2} (2003) 609-646.

\end{thebibliography}
\end{document}